\documentclass[11pt]{amsart}

\usepackage{amsmath}
\usepackage{amssymb}
\usepackage{graphicx}

\textwidth=33truepc
\textheight=50truepc

\paperheight=257mm
\paperwidth=182mm

\usepackage[top=2.5cm,bottom=2.5cm,left=3.2cm,right=3.2cm]{geometry}
\newtheorem{theorem}{Theorem}[section]

\newtheorem{lemma}[theorem]{Lemma}
\newtheorem{proposition}[theorem]{Proposition}
\theoremstyle{definition}
\newtheorem{definition}[theorem]{Definition}
\newtheorem{example}[theorem]{Example}
\newtheorem{remark}[theorem]{Remark}

\numberwithin{equation}{section}

\title[Backward-forward dynamical systems for monotone inclusion problems]{Variable metric backward-forward dynamical systems for monotone inclusion problems}

\author[P. Gautam]{Pankaj Gautam}

\address[P. Gautam]{Department of Mathematical Sciences, Indian Institute of Technology(BHU), Varanasi 221005, India}
\email{{\tt pgautam908@gmail.com}}

\author[D. R. Sahu]{D. R. Sahu}
\address[D. R. Sahu]{Department of Mathematics, Banaras Hindu University, Varanasi 221005, India}
\email{\tt drsahudr@gmail.com}

\author[J. C. Yao]{J. C. Yao}
\address[J. C. Yao]{Center for General Education,China Medical University,Taichung 40402,
	Taiwan }
\address[J. C. Yao]{Department of Applied Mathematics, National Sun Yat-sen University, Kaohsiung 804, Taiwan}
\email{\tt yaojc@mail.cmu.edu.tw}

\keywords{Dynamical systems; Monotone inclusion; Backward-forward algorithm; variable metric; convex optimization problems}



\begin{document}

\begin{abstract}
This paper investigates first-order variable metric backward forward dynamical systems associated with monotone inclusion and convex minimization problems in real Hilbert space. The operators are chosen so that the backward-forward dynamical system is closely related to the forward-backward dynamical system and has the same computational complexity. We show existence, uniqueness, and weak asymptotic convergence of the generated trajectories and strong convergence if one of the operators is uniformly monotone. We also establish that an equilibrium point of the trajectory is globally exponentially stable and monotone attractor. As a particular case, we explore similar perspectives of the trajectories generated by a dynamical system related to the minimization of the sum of a nonsmooth convex and a smooth convex function. Numerical examples are given to illustrate the convergence of trajectories.
\end{abstract}

\maketitle


\section{Introduction}
The monotone inclusion problem is
\begin{align}\label{12}
	\text{to~find}~u\in H\textrm{ such that }0\in (A+B)u,
\end{align}
where $H$ is a real Hilbert space,    $A: H\to 2^H$ is maximally monotone operator and $B:H\to H$ is $\beta$-cocoercive operator for $\beta>0$.
This structure is quite central due to the large range of problems involved in fields such as signal and image processing, partial differential equations, mechanics, convex optimization, statics, and game theory \cite{briceno2013monotone,bauschke2011convex,combettes2011proximal,glowinski1989augmented,tseng1991applications,mercier1979lectures}.

The first order dynamical system, which is linked with forward-backward algorithm to solve the problem (\ref{12}) was studied by Bot et al. \cite{boct2017dynamical} as follows:
\begin{eqnarray}\label{11}{
		\left\{
		\begin{array}{lc@{}c@{}r}
			\dot{u}(t)=\lambda (t)[J_{\gamma A}(I-\gamma B) u(t)-u(t)]\\[6pt]
			u(0)= u_0,
		\end{array}\right.
}\end{eqnarray}where $u_0\in H$, $A: H\to 2^H$ is maximal monotone operator, $B:H\to H$ is $\beta$-cocercive operator for $\beta >0$, $J_{\gamma A}$ is resolvent of operator $A$ for $\gamma>0$ and $\lambda:[0,\infty)\to [0,\infty)$ is a Lebesgue measurable function. They studied the convergence of trajectories when $\gamma\in (0,2\beta)$. Also, Bot et al. \cite{boct2018convergence} studied that the trajectory generated by dynamical system (\ref{11}) strongly converges with exponential rate to solution of problem (\ref{12}), when $A: H\to 2^H$ is maximal monotone and $B: H\to H$ is monotone, $\frac{1}{\beta}$-Lipschitz for $\beta>0$ such that sum of both the operators is $\rho$-strongly monotone for $\rho>0$.

To determine the zero of $\partial \Phi+ B$, Abbas et al. \cite{abbas2015dynamical} studied the dynamical system of the form:
\begin{eqnarray}{
		\left\{
		\begin{array}{lc@{}c@{}r}
			\dot{u}(t)+u(t)=\operatorname{prox_{\gamma \Phi}}(I-\gamma B) u(t)\nonumber\\[6pt]
			u(0)= u_0,
		\end{array}\right.
}\end{eqnarray}
where $\Phi: H \to \mathbb{R}_\infty$ is a proper, lower semicontinuous and convex function, $\partial \Phi$ is  subdifferential of $\Phi$, $B: H\to H$ is a cocoercive operator and $\operatorname{prox_{\gamma \Phi}}$ denotes the proximal point operator of $\gamma\Phi$.

Bot et al. \cite{boct2018convergence} also studied the dynamical system which is linked with the minimization of sum of smooth and nonsmooth function, which is as follows:
\begin{eqnarray}{
		\left\{
		\begin{array}{lc@{}c@{}r}
			\dot{u}(t)=\lambda(t)[\operatorname{prox_{\gamma f}}(I-\gamma \nabla g) u(t)\nonumber-u(t)]\\[6pt]
			u(0)= u_0,
		\end{array}\right.
}\end{eqnarray}
where $f:H\to \mathbb{R}_{\infty}$ is proper, lower semicontinuous, convex function and $g:H\to \mathbb{R}$ is convex function , $1/\beta$-Lipschitz continuous gradient for $\beta>0$ and Fr\text{$\acute{e}$}chet differentiable.

To minimize a smooth  convex function $f:H\to \mathbb{R}$  over the nonempty, closed, convex set $C\subseteq H$, Antipin \cite{antipin1994minimization} and Bolte \cite{bolte2003continuous} discussed the convergence of the trajectories governed by
\begin{eqnarray}\label{d12}
	{
		\left\{
		\begin{array}{lc@{}c@{}r}
			\dot{u}(t)+u(t)=P_C(I-\gamma \nabla f) u(t)\\[6pt]
			u(0)= u_0,
		\end{array}\right.
}\end{eqnarray}
where $\gamma>0$  and $P_C$ is the projection operator on the set $C$. Antipin \cite{antipin1994minimization} also obtained the exponential convergence rate of the trajectory for dynamical system (\ref{d12}).

Bot et al. \cite{bot2016second} studied existence, uniqueness, weak and strong convergence of the trajectories achieved by second-order dynamical systems linked with the problem to find zeros of the sum of two operators, in which one is a maximally monotone operator, and another one is cocoercive. Some implicit type dynamical systems have been already studied in the literature ( see \cite{boct2016approaching,boct2017second,abbas2014newton,abbas2015dynamical,attouch2011continuous,attouch2015dynamic}).

The backward-forward algorithm was studied by Attouch et al. \cite{attouch2018backward} to solve the monotone inclusion problem (\ref{12}). Operators are chosen so that they are closely associated with a forward-backward algorithm to solve the problem (\ref{12}). The forward-backward algorithms with a symmetric positive definite operator $M$, called a variable metric, were studied by \cite{chouzenoux2014variable,combettes2014variable,lorenz2015inertial}. Raguet et al. \cite{raguet2015preconditioning} studied generalized variable metric forward-backward algorithm by taking the operator $M$ strongly positive.

In this manuscript, we investigate the first order dynamical system, which is associated with the variable metric backward-forward method to solve structured monotone inclusion problem of the form:
\begin{align}
	\text{find}~ u\in H: 0\in (A+B)u,\nonumber
\end{align}
where $A: H\to 2^H$ is maximal $(\gamma-\alpha)$-cohypomonotone for $\gamma\in \mathbb{R}, \alpha >0$, $B:H\to H$ is a $\beta$-cocoercive for $\beta>0$ and $H$ is a real Hilbert space.
We study  first-order variable metric backward-forward dynamical system of the form:
\begin{eqnarray}\label{meq}{
		\left\{
		\begin{array}{lc@{}c@{}r}
			\dot{u}(t)=\lambda (t)[(I-\gamma M^{-1}B)J_{\gamma A}^M u(t)-u(t)]\\[6pt]
			u(0)= u_0,
		\end{array}\right.
}\end{eqnarray}
where $\gamma\neq 0$, $u_0\in H$, $\lambda: [0,\infty)\to [0,\infty)$ is a Lebesgue measurable function and $J_{\gamma A}^M: H\to 2^H$ is an operator defined by $J_{\gamma A}^M:= (I+\gamma M^{-1}A)^{-1}$ and $M:H\to H$ is a strongly positive operator. It is shown that the equilibrium point is exponentially stable and monotone attractor, whenever $B_{-\gamma}$ is $\rho$-strongly monotone for $\rho>0$.  

We study the convergence behaviour of the trajectories generated by forward-backward dynamical system in variable metric setting:
\begin{eqnarray}\label{meq2}{
		\left\{
		\begin{array}{lc@{}c@{}r}
			\dot{x}(t)=\lambda (t)[J_{\gamma A}^M (I-\gamma M^{-1}B)x(t)-x(t)]\\[6pt]
			x(0)= x_0,
		\end{array}\right.
}\end{eqnarray}
where $x_0\in H$, $\lambda: [0,\infty)\to [0,\infty)$ is a Lebesgue measurable function and operators $A$, $B$ and $M$ satisfy the same conditions as in dynamical system (\ref{meq}).

We also examine the first order dynamical system generated by optimization problem of the form:
\begin{align}\label{eq1}
	\min_{u\in H}f(u)+g(u),
\end{align}
where $f:H\to \mathbb{R}\cup\{\infty\}$ is proper, convex and lower semicontinuous function and $g: H\to \mathbb{R}$ is differentiable such that its gradient $\nabla g$ is $\beta$-cocercive for $\beta>0$.

The remaining parts of this paper are organized as follows: some lemmas and definitions required for proving the main results are presented in Section $\ref{sc1}$. Existence, uniqueness, and convergence of the trajectories generated by the first-order backward-forward dynamical system (\ref{meq}) and forward-backward dynamical system (\ref{meq2}) in the variable metric environment are studied in Section \ref{sc2}. In this section, we also study the convergence behavior of the dynamical system's trajectories, which is associated with minimizing the sum of a smooth and nonsmooth function. Finally, Section \ref{sc3} is devoted to numerical experiments to illustrate the convergence of the trajectories of the dynamical system (\ref{meq}).  
\section{Preliminaries}\label{sc1}
Throughout this paper $H$ denotes a real Hilbert space with inner product $\langle\cdot, \cdot\rangle$, corresponding norm $\|\cdot\|=\sqrt{\langle\cdot, \cdot\rangle}$ and $I$ denote the identity operator.

Let $T: H\to 2^H$ be a set-valued operator. $T^{-1}: H\to 2^H$ is inverse of $T$ which is defined by the relation $y\in Tx\Leftrightarrow x\in T^{-1}y$. The graph of $T$ is the set $\operatorname{gra}(T):=\{(x,y)\in H\times H:y\in Tx\}$. The resolvent of an operator $T$ of index $\gamma$ is defined by $J_{\gamma T}:= (I+\gamma T)^{-1}$, where $\gamma\in \mathbb{R}\setminus\{0\}$. The Yosida approximation of $T$ of index $\gamma$ is defined by $T_\gamma= (T^{-1}+ \gamma I)^{-1}$, $\gamma\in \mathbb{R}$. For $\gamma, \delta\in \mathbb{R}$, we have $(T_{\gamma})_\delta=T_{\gamma+\delta}$; in particular, $(T_{-\gamma})_\gamma= T$ (see \cite{attouch2018backward}).
\begin{definition}\cite{bauschke2011convex}
	A set-valued operator $T: H\to 2^H$ is said to be
	\begin{enumerate}
		\item [(i)] monotone if $$\left\langle x-y, u-v \right\rangle \geq 0~ \forall (x,u), (y,v)\in \operatorname{gra}(T);$$
		\item [(ii)] maximal monotone if there exist no monotone operator $S:H\rightarrow 2^H$ such that $\operatorname{gra}(S)$ properly contains $\operatorname{gra}(T)$;
		\item [(iii)] maximally $\rho$-cohypomonotone if $T_\rho= (T^{-1}+\rho I)^{-1}$ is maximally monotone, where $\rho\in \mathbb{R}$;
		\item [(iv)] uniformly monotone with modulus function $\phi: [0,\infty)\to [0,\infty)$, if $\phi$ is increasing, vanishes only at $0$, and $$ \langle x-y, u-v\rangle\ge \phi(\|x-y\|)~\forall(x,u), (y,v)\in \operatorname{gra}(T);$$
		\item[(v)] strongly monotone with constant $\rho\in [0,\infty)$ if $T-\rho I$ is monotone, i.e., $$ \langle x-y, u-v\rangle\ge \rho\|x-y\|^2~\forall(x,u), (y,v)\in\operatorname{gra} (T).$$
	\end{enumerate}
\end{definition}
\begin{remark}\normalfont
	On cohypomonotonicity, its useful and related notions see \cite{combettes2004proximal,rockafellar2009variational,iusem2003inexact}.
\end{remark}
\begin{example} \label{re}\normalfont
	(i)  Let $T:H\to 2^H$ be a maximally monotone operator. Then $T$ is $\rho$-cohypomonotone for all $\rho\ge 0$ (\cite{attouch2018backward}).\\
	
	(ii)
	Consider a bounded, linear and symmetric operator $N: H\to H$ whose spectrum $\sigma(N)$ has negative points. Define the set-valued operator $T:H\to 2^H$ by $T= N^{-1}$, which is not monotone. Then $T_\rho= (T+\rho I)^{-1}$ is maximal monotone operator for $\rho> -\min\sigma(N)$. Hence $T$ is maximally $\rho$-cohypomonotone (\cite{attouch2018backward}).
	
\end{example}
\begin{definition}\cite{bauschke2011convex}
	An operator $T: H\to H$ is said to be
	\begin{enumerate}
		\item [(i)]$\beta$-cocoercive for $\beta>0$ if $$\langle Tx- Ty, x-y\rangle\ge \beta \|Tx-Ty\|^2~ for~ every~ x,y\in H;$$
		\item [(ii)]non-expansive if $$\|Tx-Ty\|\le \|x-y\|~ for~ every~ x,y\in H;$$
		\item [(iii)]$\alpha$-averaged for $\alpha\in (0,1)$ if there exists a nonexpansive operator $R:H\to H$ such that $T=(1-\alpha)I+\alpha R$.
	\end{enumerate}
	\begin{remark}\label{r2.1}\normalfont
		If $T$ is a nonexpansive operator, then operator defined by $B=I-T$ is $\frac{1}{2}$-cocoercive.
	\end{remark}
	\begin{lemma}\label{l4}\cite{bauschke2011convex}
		Let $\beta>0$, $\gamma\in (0, 2\beta)$ and $T: H\to H$ be $\beta$-cocoercive. Then $I-\gamma T$ is $\frac{\gamma}{2\beta}$-averaged.
	\end{lemma}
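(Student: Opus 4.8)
The plan is to exhibit the averaged decomposition explicitly. Since $\gamma\in(0,2\beta)$, the number $\alpha:=\frac{\gamma}{2\beta}$ lies in $(0,1)$, so it is the natural candidate for the averaging constant. If $I-\gamma T=(1-\alpha)I+\alpha R$ is to hold, then $R$ is forced to be
$$R=\tfrac{1}{\alpha}\bigl((I-\gamma T)-(1-\alpha)I\bigr)=\tfrac{1}{\alpha}(\alpha I-\gamma T)=I-\tfrac{\gamma}{\alpha}T=I-2\beta T.$$
So the whole statement reduces to proving that $R:=I-2\beta T$ is nonexpansive, and then observing that with this $R$ the identity $I-\gamma T=(1-\alpha)I+\alpha R$ holds by construction, which is exactly the definition of $\frac{\gamma}{2\beta}$-averagedness.

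The nonexpansiveness of $R$ is a direct computation from cocoercivity. For $x,y\in H$ I would expand
$$\|Rx-Ry\|^2=\|(x-y)-2\beta(Tx-Ty)\|^2=\|x-y\|^2-4\beta\langle x-y,\,Tx-Ty\rangle+4\beta^2\|Tx-Ty\|^2,$$
and then apply the $\beta$-cocoercivity of $T$, namely $\langle x-y,Tx-Ty\rangle\ge\beta\|Tx-Ty\|^2$, to the middle term. This gives
$$\|Rx-Ry\|^2\le\|x-y\|^2-4\beta^2\|Tx-Ty\|^2+4\beta^2\|Tx-Ty\|^2=\|x-y\|^2,$$
so $\|Rx-Ry\|\le\|x-y\|$, i.e. $R$ is nonexpansive. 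Combining this with the decomposition $I-\gamma T=\bigl(1-\tfrac{\gamma}{2\beta}\bigr)I+\tfrac{\gamma}{2\beta}R$ completes the argument.

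There is no real obstacle here: the only genuine content is recognizing that $2\beta T$ is firmly nonexpansive (equivalently, that $I-2\beta T$ is nonexpansive), which is just a restatement of $\beta$-cocoercivity, and then choosing the averaging constant $\alpha=\frac{\gamma}{2\beta}$ so that $\gamma T=2\beta\alpha T$ matches the $\alpha$-weighted part of the convex combination. If one wanted to avoid even the short computation, one could instead cite the standard equivalence "$T$ is $\beta$-cocoercive $\iff$ $\beta T$ is $\tfrac12$-averaged" from \cite{bauschke2011convex} together with the elementary fact that a convex-combination reparametrization of an averaged operator with the identity stays averaged with the rescaled constant; but giving the two-line norm expansion above is cleaner and self-contained.
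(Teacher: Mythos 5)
Your proof is correct: the decomposition $I-\gamma T=\bigl(1-\tfrac{\gamma}{2\beta}\bigr)I+\tfrac{\gamma}{2\beta}\,(I-2\beta T)$ together with the norm expansion showing $I-2\beta T$ is nonexpansive is exactly the standard argument. The paper itself gives no proof of this lemma, citing it from \cite{bauschke2011convex}, and your argument is precisely the one found there, so there is nothing to add.
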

	\begin{lemma}\label{l3}\cite{Ogura2002}
		Let $T_i: H\to H$ be $\alpha_i$-averaged operators for some $\alpha_i\in [0,1)$, where $i=1,2$. Then $\frac{\alpha_1+\alpha_2-2\alpha_1\alpha_2}{1-\alpha_1\alpha_2}\in [0,1)$ and $T_1T_2$ is $\frac{\alpha_1+\alpha_2-2\alpha_1\alpha_2}{1-\alpha_1\alpha_2}$-averaged.
	\end{lemma}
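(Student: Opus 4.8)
The plan is to work with the quadratic reformulation of averagedness rather than manipulating the nonexpansive factors directly. First I would record the elementary parallelogram-type identity $\|(1-\alpha)u+\alpha v\|^{2}=(1-\alpha)\|u\|^{2}+\alpha\|v\|^{2}-\alpha(1-\alpha)\|u-v\|^{2}$ and apply it with $u=x-y$ and $v=Rx-Ry$, where $T=(1-\alpha)I+\alpha R$ with $R$ nonexpansive. Since $\|Rx-Ry\|\le\|x-y\|$, this shows that $T$ being $\alpha$-averaged is equivalent to
$$\|Tx-Ty\|^{2}+\frac{1-\alpha}{\alpha}\,\|(I-T)x-(I-T)y\|^{2}\le\|x-y\|^{2}\qquad\forall x,y\in H.$$
I would carry this inequality, with the shorthand $p_{i}:=\tfrac{1-\alpha_{i}}{\alpha_{i}}$, through the rest of the argument (the degenerate cases $\alpha_{i}=0$, where $T_{i}=I$, are immediate, so one may assume $\alpha_{i}\in(0,1)$).

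For the claim $\alpha:=\frac{\alpha_{1}+\alpha_{2}-2\alpha_{1}\alpha_{2}}{1-\alpha_{1}\alpha_{2}}\in[0,1)$, I would simply note $\alpha_{1}\alpha_{2}<1$ (so the denominator is positive), $\alpha_{1}+\alpha_{2}-2\alpha_{1}\alpha_{2}=\alpha_{1}(1-\alpha_{2})+\alpha_{2}(1-\alpha_{1})\ge0$, and
$$1-\alpha=\frac{(1-\alpha_{1})(1-\alpha_{2})}{1-\alpha_{1}\alpha_{2}}>0.$$
The same computation shows that $p:=\frac{1-\alpha}{\alpha}$ satisfies $\frac{1}{p}=\frac{1}{p_{1}}+\frac{1}{p_{2}}$, which is the arithmetic identity driving the whole proof.

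Next I would apply the reformulated characterization twice: to $T_{2}$ at the points $x,y$, and to $T_{1}$ at the points $T_{2}x,T_{2}y$; substituting the first estimate into the second yields
$$\|T_{1}T_{2}x-T_{1}T_{2}y\|^{2}\le\|x-y\|^{2}-p_{2}\|a\|^{2}-p_{1}\|b\|^{2},$$
where $a:=(I-T_{2})x-(I-T_{2})y$ and $b:=(I-T_{1})T_{2}x-(I-T_{1})T_{2}y$. The key observation is the telescoping identity $(I-T_{1}T_{2})x-(I-T_{1}T_{2})y=a+b$, so the lemma reduces to the scalar-vector inequality $p\,\|a+b\|^{2}\le p_{2}\|a\|^{2}+p_{1}\|b\|^{2}$. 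I would finish with Young's inequality $\|a+b\|^{2}\le(1+\lambda)\|a\|^{2}+(1+\lambda^{-1})\|b\|^{2}$, choosing $\lambda=p_{2}/p_{1}$: the requirements $p(1+\lambda)\le p_{2}$ and $p(1+\lambda^{-1})\le p_{1}$ then hold with equality, precisely because $\frac{1}{p}=\frac{1}{p_{1}}+\frac{1}{p_{2}}$.

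I expect the only genuine obstacle to be organizational: spotting the decomposition $a+b$ of $(I-T_{1}T_{2})x-(I-T_{1}T_{2})y$ and selecting the correct Young constant, together with (if a self-contained write-up is wanted) deriving the quadratic reformulation of averagedness from the definition stated above. Everything else is routine algebra.
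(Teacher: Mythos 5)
Your proposal is correct. Note, however, that the paper itself offers no proof of this lemma: it is quoted verbatim from Ogura and Yamada \cite{Ogura2002}, so there is no in-paper argument to compare against. What you have written is essentially the standard proof of the composition rule for averaged operators (the one underlying the cited reference and, e.g., Bauschke--Combettes): replace the definition by the quadratic characterization $\|Tx-Ty\|^{2}+\tfrac{1-\alpha}{\alpha}\|(I-T)x-(I-T)y\|^{2}\le\|x-y\|^{2}$, chain it through $T_{2}$ and then $T_{1}$, use the telescoping identity $(I-T_{1}T_{2})x-(I-T_{1}T_{2})y=a+b$, and absorb the cross term via Young's inequality with the weight dictated by the harmonic-mean relation $\tfrac1p=\tfrac1{p_{1}}+\tfrac1{p_{2}}$, which is exactly equivalent to the stated value $\alpha=\tfrac{\alpha_{1}+\alpha_{2}-2\alpha_{1}\alpha_{2}}{1-\alpha_{1}\alpha_{2}}$. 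All the computations you indicate check out, including the claim $\alpha\in[0,1)$ via $1-\alpha=\tfrac{(1-\alpha_{1})(1-\alpha_{2})}{1-\alpha_{1}\alpha_{2}}$ and the equality cases in Young's inequality for $\lambda=p_{2}/p_{1}$. For a fully self-contained write-up you would only need to spell out the two points you already flag: the derivation of the quadratic characterization (both directions, the converse being needed to conclude that $T_{1}T_{2}$ is $\alpha$-averaged, which works since $\alpha\in(0,1)$ once both $\alpha_{i}>0$) and the degenerate cases $\alpha_{i}=0$, which you correctly dispose of since then $T_{i}=I$ and the formula collapses to the other parameter.
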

\end{definition}
\begin{lemma}\label{l2}\cite{attouch2018backward}
	Let $T: H\to 2^H$ be a set-valued operator, $\gamma\in \mathbb{R}$ and $\alpha>0$. Then $T$ is maximally $(\gamma-\alpha)$-cohypomonotone if and only if $T_\gamma$ is defined everywhere, single-valued and $\alpha$-cocoercive.
\end{lemma}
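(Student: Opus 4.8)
The plan is to reduce the statement to the classical characterization of maximal monotonicity through the Yosida approximation, using the composition identity $(T_\gamma)_\delta = T_{\gamma+\delta}$ recalled above. Choosing $(\gamma',\delta') = (\gamma-\alpha,\alpha)$ gives $T_\gamma = (T_{\gamma-\alpha})_\alpha$, and choosing $(\gamma'',\delta'') = (\gamma,-\alpha)$ gives $T_{\gamma-\alpha} = (T_\gamma)_{-\alpha}$; neither step puts any sign restriction on $\gamma$. Hence, writing $S := T_{\gamma-\alpha}$, the operator $T$ is maximally $(\gamma-\alpha)$-cohypomonotone exactly when $S$ is maximally monotone, while $T_\gamma = S_\alpha$. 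So it suffices to prove the special case: $S$ maximally monotone $\iff$ $S_\alpha$ everywhere defined, single-valued and $\alpha$-cocoercive.

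For the forward implication I would argue as follows. If $S$ is maximally monotone, so is $S^{-1}$, hence $S^{-1}+\alpha I$ is maximally monotone and $\alpha$-strongly monotone; by the Browder--Minty surjectivity theorem it is bijective, so its inverse $S_\alpha = (S^{-1}+\alpha I)^{-1}$ is everywhere defined and, by strong monotonicity, single-valued. Writing the $\alpha$-strong-monotonicity inequality for $S^{-1}+\alpha I$ and then interchanging the roles of points and images --- which is exactly what inverting an operator does to such an inequality --- turns it into $\langle S_\alpha p - S_\alpha q,\, p-q\rangle \ge \alpha\|S_\alpha p - S_\alpha q\|^2$, i.e., $\alpha$-cocoercivity of $S_\alpha$.

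For the converse, set $C := S_\alpha$ and suppose it is everywhere defined, single-valued and $\alpha$-cocoercive. From cocoercivity and Cauchy--Schwarz, $C$ is $\tfrac{1}{\alpha}$-Lipschitz, hence continuous; being monotone, single-valued, everywhere defined and continuous, $C$ is maximally monotone, and so is $C^{-1}$. Interchanging points and images once more, $\alpha$-cocoercivity of $C$ says precisely that $C^{-1}$ is $\alpha$-strongly monotone, i.e., $E := C^{-1}-\alpha I$ is monotone. Since $\alpha I + E = C^{-1}$ is maximally monotone and every monotone extension $E' \supseteq E$ would produce the monotone extension $\alpha I + E' \supseteq C^{-1}$, maximality forces $E' = E$; thus $E = C^{-1}-\alpha I$ is maximally monotone, whence $S = (S_\alpha)_{-\alpha} = (C^{-1}-\alpha I)^{-1}$ is maximally monotone, i.e., $T$ is maximally $(\gamma-\alpha)$-cohypomonotone.

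The bookkeeping with the indices $(T_\gamma)_\delta = T_{\gamma+\delta}$ and the two ``swap points and images'' manipulations are routine. The two substantive ingredients are the Browder--Minty surjectivity theorem (needed only for the ``everywhere defined'' part of the forward direction) and the elementary observation that a monotone $E$ with $\alpha I + E$ maximally monotone is itself maximally monotone (needed in the converse); I expect the latter to be the one spot requiring a moment's care. A slightly different route for the converse would use that $\alpha C$ is firmly nonexpansive, hence equals $J_R$ for some maximally monotone $R$, and then compute $S = C_{-\alpha} = \tfrac{1}{\alpha} R^{-1}$ directly.
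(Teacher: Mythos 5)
Your proof is correct. Note, however, that the paper itself offers no argument for this lemma: it is quoted verbatim from Attouch, Peypouquet and Redont \cite{attouch2018backward}, so there is no in-paper proof to compare against; what you have written is a self-contained justification of the cited result. Your reduction is the natural one: the purely graph-theoretic identity $(T_{\gamma'})_{\delta}=T_{\gamma'+\delta}$ (valid for all real indices, as the paper recalls) turns the statement into the classical equivalence ``$S$ maximally monotone $\iff$ $S_\alpha$ is everywhere defined, single-valued and $\alpha$-cocoercive'' with $S:=T_{\gamma-\alpha}$ and $\alpha>0$, which is exactly how the cited source treats cohypomonotonicity. Both substantive steps check out: in the forward direction, maximal monotonicity of $S^{-1}+\alpha I$ together with its $\alpha$-strong monotonicity gives surjectivity and a single-valued inverse, and inverting the strong-monotonicity inequality is precisely $\alpha$-cocoercivity of $S_\alpha$; in the converse, your extension argument is sound, since for any monotone $E'\supseteq E$ one has $\alpha I+E'\supseteq C^{-1}$ monotone, maximality of $C^{-1}$ forces equality of graphs, and cancelling the single-valued term $\alpha x$ pointwise indeed gives $E'=E$, so $S=E^{-1}$ is maximally monotone. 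The alternative route you sketch (recognizing $\alpha C$ as firmly nonexpansive with full domain, hence $\alpha C=J_R$ for some maximally monotone $R$, and computing $S=\tfrac{1}{\alpha}R^{-1}$) is equally valid and slightly shorter, at the cost of invoking the Minty correspondence between firmly nonexpansive maps and resolvents rather than only the continuity criterion for maximality.
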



Let $M: H\to H$ be a bounded, linear, and self-adjoint operator. $M$ is said to be positive if $\langle Mx,x \rangle\ge 0$, for all $x\in H$, and strongly positive if $\exists ~m\in (0,\infty)$ such that $M-mI$ is positive. The square root and inverse of the strongly positive operator $M$ are denoted by $\sqrt{M}$ and $M^{-1}$.

The maps $(x,y)\mapsto\langle x,y \rangle_M:=\langle Mx,y \rangle$ and $x\mapsto \|x\|_M:= \sqrt{\langle Mx,x \rangle}$ define an inner product and a norm over $H$, respectively, where $M$ is a strongly positive operator on $H$. For all $x\in H$, $$m\|x\|^2\le \|x\|_M^2\le \|M\|\|x\|^2.$$ Thus, $\|\cdot\|$ and $\|\cdot\|_M$ are equivalent norms and hence induce the same topology over $H$.

For $\gamma\in \mathbb{R}\setminus\{0\}$ and strongly positive operator $M$, we denote $J_{\gamma T}^M:= (I+\gamma M^{-1}T)^{-1}$.
\begin{definition}\cite{bauschke2011convex}
	Let $f: H \to (-\infty, \infty]$ be a function.  
	\begin{itemize}
		\item [(i)] If $f$ is a proper function, then $f$ is said to be uniformly convex with modulus function $\phi:[0,\infty)\to [0,\infty)$ if  $$f(\nu x+ (1-\nu)y)+\nu(1-\nu)\phi(\|x-y\|)\le \nu f(x)+(1-\nu)f(y)$$~ for all $ \nu\in (0,1)$  and $x,y\in dom f$,  where function $\phi$ is increasing and vanishes at $0$.
		\item [(ii)] Let $\gamma>0$. The Moreau envelope of $f$ with parameter $\gamma$ is
		\begin{align*}
			f_\gamma(x)= \inf_{y\in H}\{f(y)+\frac{1}{2\gamma}\|x-y\|^2\}.
		\end{align*}
	\end{itemize}
\end{definition}
\begin{definition}\cite{attouch2011continuous,abbas2014newton}\label{def1}
	Let $b>0$, $x: [0,b]\to H$ be a function. Then $x$ is absolutely continuous if one of the following equivalent properties holds:
	\begin{itemize}
		\item [(i)] there exists an integrable function $y:[0,b]\to H$ such that
		\begin{align}
			x(t)=x(0)+\int_{0}^{t}y(s)ds~~~for ~all~ t\in [0,b];\nonumber
		\end{align}
		\item [(ii)] $x$ is continuous and its distribution derivative $\dot{x}$ is Lebesgue integrable on $[0,b]$;
		\item [(iii)] $\forall~ \epsilon >0$, $\exists~ \eta >0$ such that for any finite family of intervals $I_k=(a_k, b_k)$ we have the implication
		\begin{align}
			\left(I_k\cap I_j = \emptyset~ and~ \sum_{k}|b_k-a_k|< \eta\right) \Rightarrow \sum_{k} \|x(b_k)\|-x(a_k)\|< \epsilon.\nonumber
		\end{align}
	\end{itemize}
\end{definition}
\begin{remark}\label{Re1}\normalfont
	$(a)$ From Definition \ref{def1}, one asserts that an absolutely continuous function is differentiable almost everywhere, its derivative matches with its distributional derivative almost everywhere,  and one can achieve the function from its derivative $\dot{x}=y$ with the help of integration formula $(i)$.
	
	$(b)$ Given $b>0$, let $x:[0,b]\to H$ be an absolutely continuous function. Then one can show by using Definition $\ref{def1}(iii)$ that $z=B\circ x$ is absolutely continuous for $L$-Lipschitz continuous operator $B$. Also, $z$ is almost everywhere differentiable and $\|\dot{z}(\cdot)\|\le L\|\dot{x}(\cdot)\|$ holds almost everywhere.  
\end{remark}
\begin{definition}\label{d1}
	A map $u:[0,\infty)\to H$ is said to be strong solution of (\ref{meq}), if the following properties hold:
	\begin{itemize}
		\item [(i)] $u:[0,\infty)\to H$ is locally absolutely continuous;
		\item [(ii)] $\dot{u}(t)=\lambda (t)[(I-\gamma M^{-1}B)J_{\gamma A}^M u(t)-u(t)]$ for almost every $t\in [0,\infty)$;
		\item [(iii)] $u(0)= u_0.$
	\end{itemize}
\end{definition}
\begin{definition}
	A point $u^*$ is said to be an equilibrium point of dynamical system (\ref{meq}) if $u^*$ satisfies (\ref{12}), i.e., $0\in (A+B)u^*$.
\end{definition}
\begin{definition}\cite{nagurney2012projected}
	Let $u(t)$ be the solution of the dynamical system (\ref{meq}) and $u(0)=u_0$. Then an equilibrium point $u^*$ is said to be
	\begin{enumerate}
		\item [(i)]globally exponentially stable if there are constants $C_1>0$ and $C_2>0$ such that
		\begin{align*}
			\|u(t)-u^*\|\le C_1\|u_0-u^*\|\exp(-C_2t) ~\forall t>0;
		\end{align*}
		\item [(ii)]global monotone attractor if $\|u(t)-u^*\|$ is nonincreasing in $t$.
	\end{enumerate}
\end{definition}
\begin{lemma}\cite{abbas2014newton}\label{lm2.2}
	If $F:[0,\infty)\to [0,\infty)$ is locally absolutely continuous function, for $1\le p < \infty$, $1\le r\le \infty$, $F\in L^{p}\left([0,\infty)\right)$, $G:[0,\infty)\to \mathbb{R}$, $G\in L^{r}\left([0,\infty)\right)$ and for almost every $t\in [0,\infty)$ $$ \frac{d}{d(t)}F(t)\le G(t),$$
	then $\lim\limits_{t\to \infty}F(t)=0$.
\end{lemma}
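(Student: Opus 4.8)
The plan is to argue by contradiction: the differential inequality $\dot F\le G$ keeps $F$ from increasing appreciably over a short time window, so if $F$ did not tend to $0$ it would remain bounded below by a positive constant on infinitely many disjoint intervals of a fixed length, forcing $\int_0^\infty F^p=\infty$.

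First I would record what local absolute continuity gives. For all $0\le s\le t$,
\[
F(t)-F(s)\;=\;\int_s^t\dot F(\tau)\,d\tau\;\le\;\int_s^t G(\tau)\,d\tau\;\le\;\int_s^t [G(\tau)]_+\,d\tau,
\]
where $[\,\cdot\,]_+$ denotes positive part. The form I will actually use is the ``backward'' reading $F(s)\ge F(t)-\int_s^t[G]_+$: a lower bound for $F$ at a late instant $t$ propagates to a lower bound of $F$ on the whole interval $[s,t]$ immediately preceding it, as soon as $\int_s^t[G]_+$ is small. (Looking to the right of $t$ is useless, since there $F$ may drop arbitrarily fast.)

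Second comes the technical core: an estimate uniform in $r\in[1,\infty]$, namely that for every $\eta>0$ there is $\delta>0$ with $\int_I[G(\tau)]_+\,d\tau\le\eta$ whenever $I$ is an interval of length $\le\delta$. For $r\in(1,\infty]$ this is immediate from H\"older's inequality, $\int_I[G]_+\le\|G\|_{L^r}\,|I|^{1-1/r}$. For $r=1$ I would truncate at a level $M$: $\int_I[G]_+\le M|I|+\int_{\{[G]_+>M\}}[G]_+$, and the last integral tends to $0$ as $M\to\infty$ by dominated convergence (the integrand is dominated by $[G]_+\in L^1$), so choosing $M$ large and then $\delta$ small suffices. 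This is the one step I expect to need genuine care, exactly because the hypotheses allow $G$ to be merely bounded, so $[G]_+$ need not be integrable on $[0,\infty)$ and a ``vanishing tail'' argument is unavailable; short-interval control is what survives in all cases.

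Finally I would assemble the contradiction. Suppose $F(t)\not\to0$; then there are $\epsilon>0$ and $t_n\uparrow\infty$ with $F(t_n)\ge\epsilon$. Applying the second step with $\eta=\epsilon/2$ yields $\delta>0$, and then the first step gives, for every $n$ with $t_n\ge\delta$ and every $t\in[t_n-\delta,t_n]$, $F(t)\ge F(t_n)-\int_t^{t_n}[G]_+\ge\epsilon-\epsilon/2=\epsilon/2$. Passing to a subsequence so that the intervals $[t_n-\delta,t_n]$ are pairwise disjoint (possible since $t_n\to\infty$), we obtain
\[
\int_0^\infty F(t)^p\,dt\;\ge\;\sum_n\int_{t_n-\delta}^{t_n}F(t)^p\,dt\;\ge\;\sum_n(\epsilon/2)^p\,\delta\;=\;\infty,
\]
contradicting $F\in L^p([0,\infty))$ — and this is precisely where $p<\infty$ is used. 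Hence $\lim_{t\to\infty}F(t)=0$.
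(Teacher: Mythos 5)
Your proposal is correct, and there is nothing in the paper to compare it against: the lemma is stated here only by citation to \cite{abbas2014newton}, with no proof given. Your contradiction argument is complete and handles the delicate points properly — the integrated form $F(s)\ge F(t_n)-\int_s^{t_n}[G]_+$ from local absolute continuity, the uniform short-interval bound on $\int_I [G]_+$ (H\"older for $1<r\le\infty$, truncation/absolute continuity of the integral for $r=1$, which is exactly where a naive ``vanishing tail'' argument would fail since $[G]_+$ need not be integrable), and the final use of $p<\infty$ and $F\ge 0$ to get divergence of $\int_0^\infty F^p$ over disjoint intervals of fixed length. This is essentially the standard proof of the cited result, so no gap remains.
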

\begin{lemma}\label{2l1}\cite{bot2016second}
	Let $C$ be a nonempty subset of $H$ and $u:[0,\infty)\to H$ ba a given map. Suppose that
	\begin{itemize}
		\item[(i)] $\lim\limits_{t\to\infty}\|u(t)-u^*\|$ exists, for every $u^*\in C$;
		\item[(ii)] every weak sequemtial cluster point of the map $u$ belongs to $C$.
	\end{itemize}
	Then there exists $u_\infty\in C$ such that $u(t)\rightharpoonup u_\infty$ as $t\to\infty$.
\end{lemma}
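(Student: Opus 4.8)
The plan is to adapt Opial's lemma to the continuous-time setting. First I would observe that, since $C\neq\emptyset$, hypothesis (i) holds in particular for some fixed $u^*\in C$, which forces the map $u$ to be bounded on $[0,\infty)$. As $H$ is a Hilbert space, hence reflexive, every sequence $(u(t_n))$ with $t_n\to\infty$ admits a weakly convergent subsequence; thus the set of weak sequential cluster points of $u$ is nonempty, and by (ii) it is contained in $C$. It therefore suffices to prove that this set is a singleton.

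The key step is the uniqueness of the weak cluster point. Suppose $u_1$ and $u_2$ are two weak sequential cluster points, say $u(t_n)\rightharpoonup u_1$ and $u(s_n)\rightharpoonup u_2$ with $t_n,s_n\to\infty$; by (ii), $u_1,u_2\in C$. Expanding squared norms gives, for every $t\ge 0$,
\begin{align*}
	\|u(t)-u_1\|^2-\|u(t)-u_2\|^2 = 2\langle u(t),\, u_2-u_1\rangle + \|u_1\|^2-\|u_2\|^2.
\end{align*}
By (i) the left-hand side converges as $t\to\infty$, hence so does $t\mapsto\langle u(t),\, u_2-u_1\rangle$; call its limit $\ell$. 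Passing to the limit along $(t_n)$ yields $\ell=\langle u_1,\, u_2-u_1\rangle$, and along $(s_n)$ yields $\ell=\langle u_2,\, u_2-u_1\rangle$; subtracting gives $\|u_1-u_2\|^2=0$, so $u_1=u_2$.

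Finally I would invoke the standard fact that a bounded map possessing a unique weak sequential cluster point $u_\infty$ converges weakly to it: if $u(t)\not\rightharpoonup u_\infty$, there would exist $\varphi\in H$, $\varepsilon>0$ and $t_n\to\infty$ with $|\langle u(t_n)-u_\infty,\varphi\rangle|\ge\varepsilon$; boundedness lets us extract a weakly convergent subsequence $u(t_{n_k})\rightharpoonup v$, whence $v$ is a weak sequential cluster point and so $v=u_\infty$, contradicting $|\langle v-u_\infty,\varphi\rangle|\ge\varepsilon>0$. Hence $u(t)\rightharpoonup u_\infty\in C$ as $t\to\infty$. I do not anticipate a genuine obstacle here, as this is the classical Opial argument; the only point requiring mild care is keeping the distinction between \emph{sequential} weak cluster points (extracted along sequences $t_n\to\infty$) and the behaviour of the whole net $t\mapsto u(t)$, which is harmless precisely because weak convergence of the net can be tested, and any failure contradicted, along sequences thanks to boundedness.
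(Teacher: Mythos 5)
Your argument is correct and is precisely the classical continuous-time Opial lemma proof (boundedness, uniqueness of the weak sequential cluster point via the expansion $\|u(t)-u_1\|^2-\|u(t)-u_2\|^2$, then weak convergence of the whole trajectory by a subsequence extraction), which is the same route as in the cited source; the paper itself states this lemma by citation and gives no proof of its own. The only cosmetic point is that hypothesis (i) yields boundedness of $u$ only for large $t$ (no continuity is assumed on $[0,\infty)$), which is all your argument actually uses.
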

\begin{lemma}\cite{bauschke2011convex}\label{l2.4}
	Let $C\subseteq H$ be a nonempty closed convex set and $T: C\to H$ be a nonexpansive mapping. Let $\{u_n\}$ be a sequence in $C$ and $u\in H$. Suppose that $u_n\rightharpoonup u$ and that $u_n- Tu_n\to 0$. Then $u\in \operatorname{Fix (T)}$.
\end{lemma}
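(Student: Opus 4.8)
The plan is to reduce the statement to the single assertion $\|Tu-u\|=0$ and then compare the scalars $\|u_n-Tu\|^2$ and $\|u_n-u\|^2$ in two different ways, letting $n\to\infty$; this is essentially Opial's argument. First, since $C$ is closed and convex it is weakly sequentially closed, so from $u_n\rightharpoonup u$ with $u_n\in C$ we get $u\in C$; hence $Tu$ is well defined and it suffices to show $u=Tu$. Also recall that the weakly convergent sequence $\{u_n\}$ is bounded, so $R:=\sup_n\|u_n-u\|<\infty$.

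For the first comparison I would expand both norms and subtract, obtaining
\begin{align*}
\|u_n-Tu\|^2-\|u_n-u\|^2 \;=\; -2\langle u_n,\,Tu-u\rangle + \|Tu\|^2-\|u\|^2 .
\end{align*}
Since $u_n\rightharpoonup u$, the right-hand side converges, and a short computation gives $-2\langle u,Tu-u\rangle+\|Tu\|^2-\|u\|^2=\|Tu-u\|^2$. Thus $\lim_{n\to\infty}\bigl(\|u_n-Tu\|^2-\|u_n-u\|^2\bigr)=\|Tu-u\|^2$.

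For the second comparison I would use the hypothesis $\delta_n:=\|u_n-Tu_n\|\to 0$ and nonexpansiveness of $T$: from $\|u_n-Tu\|\le\|u_n-Tu_n\|+\|Tu_n-Tu\|\le\delta_n+\|u_n-u\|$, squaring yields $\|u_n-Tu\|^2-\|u_n-u\|^2\le\delta_n^2+2\delta_n\|u_n-u\|\le\delta_n^2+2R\delta_n$. Letting $n\to\infty$ gives $\limsup_{n\to\infty}\bigl(\|u_n-Tu\|^2-\|u_n-u\|^2\bigr)\le 0$. Combining with the first step forces $\|Tu-u\|^2\le 0$, i.e.\ $u=Tu$, so $u\in\operatorname{Fix}(T)$.

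There is no real obstacle here; the only points needing care are invoking weak sequential closedness of $C$ (to make $Tu$ meaningful and put $u\in C$) and using the boundedness of $\{u_n\}$, which is what makes the estimate in the second step collapse in the limit. An alternative route, better suited to the paper's toolkit, is to set $B=I-T$, which is $\tfrac12$-cocoercive by Remark~\ref{r2.1}: writing the cocoercivity inequality between $u_n$ and an arbitrary $v\in C$ and passing to the limit with $Bu_n\to 0$ gives $\langle Bv,v-u\rangle\ge\tfrac12\|Bv\|^2$ for all $v\in C$; then Minty's trick with $v_t=(1-t)u+tw\in C$, $t\downarrow 0$, and continuity of $B$ yields $\tfrac12\|Bu\|^2\le 0$, hence $u\in\operatorname{Fix}(T)$.
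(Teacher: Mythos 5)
Your proof is correct: the two-comparison (Opial-type) argument is complete, and the only points needing care — weak sequential closedness of $C$ so that $Tu$ makes sense, and boundedness of $\{u_n\}$ — are handled. The paper does not prove this lemma at all (it is quoted from Bauschke--Combettes, where it appears as the demiclosedness principle for nonexpansive maps), and your argument is essentially the standard proof from that reference; as a minor remark, in your alternative cocoercivity route the Minty step is superfluous, since once you have $\langle Bv, v-u\rangle \ge \tfrac12\|Bv\|^2$ for all $v\in C$ you may simply take $v=u$ to conclude $\|Bu\|=0$.
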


\begin{proposition}\label{l2.1}
	Let $T: H\to 2^H$ be a set-valued operator and $\gamma\in \mathbb{R}\setminus \{0\}$. Then we have the following:
	\begin{itemize}
		\item [(a)] $J_{\gamma T}^M= I-\gamma M^{-1} T_{\gamma}$.
		\item [(b)] $T_{\gamma}$ is defined everywhere and single-valued whenever $J_{\gamma T}^M$ is.
		\item [(c)] $J_{\gamma(T_{-\gamma})}^M= I-\gamma M^{-1} T$.
		\item [(d)] $T$ is defined everywhere and single-valued whenever $J_{\gamma (T_{-\gamma})}^M$ is.
	\end{itemize}
\end{proposition}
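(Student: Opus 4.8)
The heart of the matter is part (a); parts (b)--(d) will then drop out as one-line consequences of (a) together with the identity $(T_{-\gamma})_\gamma=T$ already recorded in Section~\ref{sc1}. So I would first prove (a) by a direct chain of equivalences on set-valued inclusions, exploiting that a strongly positive operator $M$ (and hence $M^{-1}$) is a bounded linear bijection of $H$.

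For (a), fix $x\in H$ and put $p:=J_{\gamma T}^M x$. Unravelling the $M$-resolvent,
\[
p=(I+\gamma M^{-1}T)^{-1}x \iff x-p\in\gamma M^{-1}Tp \iff \tfrac{1}{\gamma}M(x-p)\in Tp \iff p\in T^{-1}\big(\tfrac{1}{\gamma}M(x-p)\big),
\]
where the second equivalence uses that $M$ is invertible. Setting $v:=\tfrac{1}{\gamma}M(x-p)$, so that $p=x-\gamma M^{-1}v$, the last inclusion reads $x-\gamma M^{-1}v\in T^{-1}v$, equivalently $x\in T^{-1}v+\gamma M^{-1}v=(T^{-1}+\gamma M^{-1})v$, i.e. $v\in(T^{-1}+\gamma M^{-1})^{-1}x=T_\gamma x$ (the Yosida approximation read in the metric $M$). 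Since $p\mapsto v=\tfrac{1}{\gamma}M(x-p)$ is an affine $M$-linear bijection, this chain shows that $J_{\gamma T}^M x = p = x-\gamma M^{-1}v$ ranges exactly over $x-\gamma M^{-1}T_\gamma x$, which is the asserted identity $J_{\gamma T}^M=I-\gamma M^{-1}T_\gamma$. (Equivalently, one may observe $J_{\gamma T}^M=J_{\gamma(M^{-1}T)}$ is the ordinary resolvent of $M^{-1}T$ and invoke the classical relation $J_{\gamma S}=I-\gamma S_\gamma$ with $S=M^{-1}T$, then rewrite $(M^{-1}T)^{-1}=T^{-1}M$.)

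Given (a), part (b) is immediate: $\gamma M^{-1}T_\gamma=I-J_{\gamma T}^M$, hence $T_\gamma=\tfrac{1}{\gamma}M\,(I-J_{\gamma T}^M)$, and since $M$ is everywhere defined and single-valued, $T_\gamma$ inherits both properties from $J_{\gamma T}^M$. For (c), apply (a) with $T$ replaced by $T_{-\gamma}$ and use $(T_{-\gamma})_\gamma=T$ to obtain $J_{\gamma(T_{-\gamma})}^M=I-\gamma M^{-1}(T_{-\gamma})_\gamma=I-\gamma M^{-1}T$. Finally, (d) follows from (b) applied to $T_{-\gamma}$: $(T_{-\gamma})_\gamma=T$ is defined everywhere and single-valued whenever $J_{\gamma(T_{-\gamma})}^M$ is. The only place that demands care is (a) --- one must track domains and single-valuedness when inverting in the $M$-inner product and make sure the Yosida approximation used there is the one compatible with $M$; everything after that is routine.
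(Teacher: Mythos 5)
Your derivation follows the same strategy as the paper's own proof --- unravel $p\in J^M_{\gamma T}x$ through a chain of equivalent inclusions --- and your chain is correct; but look at what it actually yields, namely
\[
J_{\gamma T}^M \;=\; I-\gamma M^{-1}\bigl(T^{-1}+\gamma M^{-1}\bigr)^{-1}.
\]
The paper defines the Yosida approximation in Section~\ref{sc1} as $T_\gamma=(T^{-1}+\gamma I)^{-1}$, and with that definition the asserted identity (a) --- and hence the last ``$\Leftrightarrow$'' in the paper's proof, which passes from $\frac{x-y}{\gamma}\in M^{-1}T\bigl(x-\gamma\frac{x-y}{\gamma}\bigr)$ directly to $y\in x-\gamma M^{-1}T_\gamma x$ --- is false whenever $M\neq I$. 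Concretely, on $H=\mathbb{R}$ take $Tx=x$, $Mx=2x$, $\gamma=1$: then $J^M_{\gamma T}x=\tfrac23x$, while $T_\gamma=\tfrac12 I$ gives $(I-\gamma M^{-1}T_\gamma)x=\tfrac34x$; your operator $(T^{-1}+\gamma M^{-1})^{-1}=\tfrac23I$ reproduces $\tfrac23x$, as it must. So your parenthetical identification ``$(T^{-1}+\gamma M^{-1})^{-1}x=T_\gamma x$ (the Yosida approximation read in the metric $M$)'' is not a harmless renaming: it silently replaces the paper's $T_\gamma$ by a genuinely different operator, and that replacement is the only way to make (a) true. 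As a proof of the proposition literally as stated it therefore has a gap at exactly that step --- though the gap is inherited from the statement and from the paper's proof, whose final equivalence is asserted without justification and fails for the paper's $T_\gamma$. Your closing caveat about ``the Yosida approximation compatible with $M$'' is precisely the point; it should be the headline of the argument, not an aside.

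Granting the corrected reading of (a), your (b)--(d) are sound: $T_\gamma=\tfrac1\gamma M\,(I-J^M_{\gamma T})$ gives (b), and the resolvent-type identity $(T_{-\gamma})_\gamma=T$ does remain valid for the $M$-metric approximations, since $(T^{-1}-\gamma M^{-1})+\gamma M^{-1}=T^{-1}$, so (c) and (d) follow as you say --- but note that the identity $(T_{-\gamma})_\gamma=T$ quoted in the paper's preliminaries is stated for the standard Yosida approximation, so to be consistent you must use the $M$-metric versions of $T_{\pm\gamma}$ throughout, and say so. Finally, the distinction is not cosmetic downstream: Theorem~\ref{th1} combines Proposition~\ref{l2.1}(a) with the $\alpha$-cocoercivity of the standard $A_\gamma$ (Lemma~\ref{l2}, Proposition~\ref{l1}(b)), so the two notions of Yosida approximation cannot be interchanged there without an additional argument; any fix of (a) has to be propagated to those results as well.
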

\begin{proof}
	(a) Since $\gamma \ne 0$, we obtain
	\begin{align}
		y\in J_{\gamma T}^Mx&\Leftrightarrow x\in y+\gamma M^{-1}Ty \nonumber\\[6pt]
		&\Leftrightarrow\frac{x-y}{\gamma}\in M^{-1}T(x-\gamma\frac{x-y}{\gamma})\nonumber\\[6pt]
		&\Leftrightarrow y\in x-\gamma M^{-1} T_\gamma x.\nonumber
	\end{align}
	(b) It follows from (a).\\
	(c) Replace $T$ by $T_{-\gamma}$ in (a), we get $$J_{\gamma (T_{-\gamma})}^M=I-\gamma M^{-1}(T_{-\gamma})_\gamma=I-\gamma M^{-1}T.$$
	(d) It follows from (c).
	
\end{proof}
\begin{proposition}\label{l1}
	Let $A: H\to 2^H$ be maximally $(\gamma-\alpha)$-cohypomonotone, $B: H\to H$ be $\beta$-cocoercive  and $M: H\to H$ be a strongly positive such that $\|M^{-1}\|\le \frac{1}{\kappa}\min\{\alpha, \beta\}$, where $\kappa>0$. Then we have the following:
	\begin{itemize}
		\item [(a)] $M^{-1}B$ is $\kappa$-cocoercive.
		\item [(b)] $M^{-1}A_{\gamma}$ is $\kappa$-cocoercive.
	\end{itemize}
\end{proposition}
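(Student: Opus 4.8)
The plan is to carry out the entire argument in the metric induced by $M$, i.e.\ with inner product $\langle x,y\rangle_M=\langle Mx,y\rangle$ and norm $\|\cdot\|_M$; this is the natural reading of ``$\kappa$-cocoercive'' here, since under the change of metric the operators $M^{-1}B$ and $M^{-1}A_\gamma$ are conjugate to the ``honest'' operators $B$ and $A_\gamma$. The only two facts about $M$ that I would need are: for all $u,v\in H$,
\[
\langle M^{-1}u-M^{-1}v,\;x-y\rangle_M=\langle u-v,\;x-y\rangle,
\qquad
\|M^{-1}u-M^{-1}v\|_M^2=\langle u-v,\;M^{-1}(u-v)\rangle\le\|M^{-1}\|\,\|u-v\|^2,
\]
the first being immediate from $M(M^{-1}w)=w$, the second from Cauchy--Schwarz together with $\|M^{-1}w\|\le\|M^{-1}\|\,\|w\|$.

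For part (a), I would apply these two identities with $u=Bx$, $v=By$. Cocoercivity of $B$ gives $\langle Bx-By,x-y\rangle\ge\beta\|Bx-By\|^2$, while the second display rearranges to $\|Bx-By\|^2\ge\|M^{-1}\|^{-1}\|M^{-1}Bx-M^{-1}By\|_M^2$. Chaining them yields
\[
\langle M^{-1}Bx-M^{-1}By,\;x-y\rangle_M\;\ge\;\frac{\beta}{\|M^{-1}\|}\,\|M^{-1}Bx-M^{-1}By\|_M^2 .
\]
Finally the hypothesis $\|M^{-1}\|\le\frac1\kappa\min\{\alpha,\beta\}$ gives $\dfrac{\beta}{\|M^{-1}\|}\ge\dfrac{\beta\kappa}{\min\{\alpha,\beta\}}\ge\kappa$ because $\beta\ge\min\{\alpha,\beta\}$, which is exactly $\kappa$-cocoercivity of $M^{-1}B$ in the $M$-metric.

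For part (b), I would first invoke Lemma \ref{l2}: since $A$ is maximally $(\gamma-\alpha)$-cohypomonotone, $A_\gamma$ is everywhere defined, single-valued, and $\alpha$-cocoercive. Then the computation of part (a) transfers verbatim with $B$ replaced by $A_\gamma$ and $\beta$ replaced by $\alpha$, the analogous constant chase $\dfrac{\alpha}{\|M^{-1}\|}\ge\dfrac{\alpha\kappa}{\min\{\alpha,\beta\}}\ge\kappa$ closing the argument.

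There is no genuine obstacle; the proposition is a bookkeeping exercise. The two points deserving a moment's care are (i) using the \emph{upper} bound $\|M^{-1}w\|_M^2\le\|M^{-1}\|\,\|w\|^2$ rather than the lower bound coming from strong positivity of $M$, and (ii) recognizing that the quantity $\min\{\alpha,\beta\}$ (not $\alpha$ or $\beta$ alone) is precisely what makes both parts come out with the \emph{same} constant $\kappa$.
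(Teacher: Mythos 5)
Your proposal is correct and follows essentially the same route as the paper: both arguments interpret $\kappa$-cocoercivity of $M^{-1}B$ and $M^{-1}A_\gamma$ in the $M$-metric, use the identity $\langle M^{-1}u-M^{-1}v,x-y\rangle_M=\langle u-v,x-y\rangle$ together with the Cauchy--Schwarz bound $\langle w,M^{-1}w\rangle\le\|M^{-1}\|\,\|w\|^2$, and reduce part (b) to part (a) via Lemma \ref{l2}. The only difference is cosmetic (you chain the inequalities multiplicatively, the paper subtracts and shows the remainder is nonnegative), so no further comment is needed.
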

\begin{proof} (a) Let $x, y\in H$.
	Since $B$ is $\beta$-cocercive, we have
	\begin{equation}
		\langle Bx-By, x-y\rangle \ge \beta\|Bx-By\|^2\\
		\Leftrightarrow \langle M^{-1}Bx-M^{-1}By, x-y\rangle_M\ge\beta\langle Bx-By, Bx-By\rangle.\nonumber
	\end{equation}
	Also, $\kappa\|M^{-1}Bx-M^{-1}By\|_M=\kappa\langle Bx-By, M^{-1}(Bx-By)\rangle$. Altogether, denoting $w:=(Bx-By)$, we obtain
	\begin{align}\label{1eq}
		\langle M^{-1}Bx-M^{-1}By, x-y\rangle_M-\kappa\|M^{-1}Bx-M^{-1}By\|_M \ge \beta\|w\|^2-\kappa\langle w, M^{-1}w\rangle.
	\end{align}
	By the Cauchy-Schwarz inequality, we have $$\kappa\langle w, M^{-1}w\rangle\le \kappa\|M^{-1}\|\|w\|^2.$$
	Hence, from (\ref{1eq}), we get
	\begin{align}
		\langle M^{-1}Bx-M^{-1}By, x-y\rangle_M\ge\kappa\|M^{-1}Bx-M^{-1}By\|_M,\nonumber
	\end{align}
	which implies that $M^{-1}B$ is $\kappa$-cocoercive.
	
	(b) From Lemma \ref{l2}, $A_\gamma$ is $\alpha$-cocoercive operator. So, in the similar manner, one can show $M^{-1}A_\gamma$ is $\kappa$-cocoercive.
\end{proof}
\begin{proposition}\label{lm1}
	Let $\gamma \neq 0.$ Let $A: H\to 2^H$ be a set-valued operator such that $J_{\gamma A}^M$ is single-valued and everywhere defined and $B: H\to H$ be an operator. Define $T_1:= (I+\gamma M^{-1}A)^{-1}(I-\gamma M ^{-1}B)$ and $T_2:= (I+\gamma M ^{-1}B_{-\gamma})^{-1}(I-\gamma M ^{-1}A_\gamma)$, where $M: H\to H$ is a strongly positive operator. Then the following statements hold:
	\begin{itemize}
		\item [(a)] $x\in \operatorname{Fix}(T_1)\Leftrightarrow x\in \operatorname{Zer} (A+B)\Leftrightarrow A_{\gamma}\circ(I-\gamma M^{-1}B)x+Bx=0$.
		\item [(b)] $y\in \operatorname{Fix}(T_2)\Leftrightarrow y\in \operatorname{Zer} (B_{-\gamma}+A_{\gamma})\Leftrightarrow B\circ J_{\gamma A}^M y+A_\gamma y=0$.
		\item [(c)] $I-\gamma M^{-1}B: \operatorname{Zer}(A+B)\to\operatorname{Zer}(B_{-\gamma}+A_{\gamma})$ is a bijection with inverse $J_{\gamma A}^M$.
	\end{itemize}
\end{proposition}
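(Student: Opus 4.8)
The plan is to reduce every claim to the resolvent identities of Proposition \ref{l2.1} together with the Yosida composition rule $(B_{-\gamma})_\gamma = B$. First I would record two reformulations of the operators. Applying Proposition \ref{l2.1}(a) to $A$ gives $J_{\gamma A}^M = I - \gamma M^{-1}A_\gamma$, so
\[
T_1 = J_{\gamma A}^M\,(I - \gamma M^{-1}B) = (I - \gamma M^{-1}A_\gamma)(I - \gamma M^{-1}B);
\]
applying Proposition \ref{l2.1}(c) to $B_{-\gamma}$ gives $J_{\gamma(B_{-\gamma})}^M = I - \gamma M^{-1}(B_{-\gamma})_\gamma = I - \gamma M^{-1}B$, whence
\[
T_2 = (I - \gamma M^{-1}B)(I - \gamma M^{-1}A_\gamma) = (I - \gamma M^{-1}B)\,J_{\gamma A}^M.
\]
These manipulations are legitimate because $J_{\gamma A}^M$ is single-valued and everywhere defined, so $A_\gamma$ is too by Proposition \ref{l2.1}(b). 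In particular we obtain the clean factorizations $T_1 = J_{\gamma A}^M\circ(I-\gamma M^{-1}B)$ and $T_2 = (I-\gamma M^{-1}B)\circ J_{\gamma A}^M$, which will drive part (c).

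For (a): reading $x = T_1 x$ directly off the definition $T_1 = (I+\gamma M^{-1}A)^{-1}(I-\gamma M^{-1}B)$ says $x - \gamma M^{-1}Bx \in x + \gamma M^{-1}Ax$; since $\gamma\neq 0$ and $M$ is boundedly invertible this cancels to $-Bx\in Ax$, i.e. $x\in\operatorname{Zer}(A+B)$. For the third characterization I would instead expand $x = T_1 x$ through the reformulation above, obtaining $0 = -\gamma M^{-1}\big(Bx + A_\gamma(I-\gamma M^{-1}B)x\big)$, and cancel $\gamma M^{-1}$. Part (b) is the mirror image: from the definition of $T_2$, $y = T_2 y$ is equivalent to $-A_\gamma y\in B_{-\gamma}y$, i.e. $y\in\operatorname{Zer}(B_{-\gamma}+A_\gamma)$; and expanding $y = T_2 y = (I-\gamma M^{-1}B)J_{\gamma A}^M y$ together with $J_{\gamma A}^M = I-\gamma M^{-1}A_\gamma$ yields $BJ_{\gamma A}^M y + A_\gamma y = 0$.

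For (c) I would proceed in three short steps. First, mutual inversion on the solution sets: by (a), $x\in\operatorname{Zer}(A+B)$ forces $x\in\operatorname{Fix}(T_1)$, i.e. $J_{\gamma A}^M(I-\gamma M^{-1}B)x = x$; symmetrically, by (b), $y\in\operatorname{Zer}(B_{-\gamma}+A_\gamma)$ forces $(I-\gamma M^{-1}B)J_{\gamma A}^M y = y$. Second, $I-\gamma M^{-1}B$ sends $\operatorname{Zer}(A+B)$ into $\operatorname{Zer}(B_{-\gamma}+A_\gamma)$: for such an $x$ put $y = (I-\gamma M^{-1}B)x$; then $x = J_{\gamma A}^M y$ by step one, so the third condition in (a) becomes $BJ_{\gamma A}^M y + A_\gamma y = 0$, which by (b) places $y$ in $\operatorname{Zer}(B_{-\gamma}+A_\gamma)$. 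Third, the symmetric argument (starting from $y\in\operatorname{Zer}(B_{-\gamma}+A_\gamma)$ and using the third condition in (b)) shows $J_{\gamma A}^M$ sends $\operatorname{Zer}(B_{-\gamma}+A_\gamma)$ into $\operatorname{Zer}(A+B)$. Combined with step one, this exhibits $I-\gamma M^{-1}B$ and $J_{\gamma A}^M$ as inverse bijections between the two sets.

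I do not expect a genuine obstacle: every step is a cancellation. The only points requiring care are not to invert $M$ or divide by $\gamma$ without justification (both valid here, since $\gamma\neq 0$ and $M$ is strongly positive, hence boundedly invertible), and to keep in mind throughout that the single-valued object one manipulates is $A_\gamma$, not $A$. The only ``creative'' content is selecting, from Proposition \ref{l2.1}, the reformulation of $T_1$ and $T_2$ that makes each equivalence fall out immediately.
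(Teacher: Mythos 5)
Your proposal is correct and follows essentially the same route as the paper: both rewrite $T_1$ and $T_2$ via Proposition \ref{l2.1} (so that $J_{\gamma A}^M=I-\gamma M^{-1}A_\gamma$ and $(I+\gamma M^{-1}B_{-\gamma})^{-1}=I-\gamma M^{-1}B$), obtain the equivalences in (a) by unwinding the fixed-point equation, get (b) as the swapped-pair mirror of (a), and deduce (c) by substituting the fixed-point identities into the third characterizations of (a) and (b). The only cosmetic difference is that the paper proves (b) by formally applying (a) to the pair $(B_{-\gamma},A_\gamma)$, whereas you expand it directly, which amounts to the same computation.
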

\begin{proof}
	(a) Suppose that $x\in \operatorname{Zer}(A+B)$. Then
	\begin{align}
		0\in (A+B)x
		&\Leftrightarrow 0\in \gamma M^{-1}Ax+\gamma M^{-1}Bx\nonumber\\[6pt]
		&\Leftrightarrow (I-\gamma M^{-1}B)x\in (I+\gamma M^{-1}A)x\nonumber\\[6pt]
		&\Leftrightarrow x= (I+\gamma M^{-1}A)^{-1}(I-\gamma M^{-1}B)x \nonumber\\[6pt]
		&\Leftrightarrow x\in \operatorname{Fix}(T_1).\nonumber
	\end{align}
	Also,
	\begin{align}
		x\in \operatorname{Fix}(T_1)&\Leftrightarrow x\in \operatorname{Fix}[(I-\gamma M^{-1}A_\gamma)(I-\gamma M^{-1}B)]\nonumber\\[6pt]
		&\Leftrightarrow x=x-\gamma M^{-1}Bx-\gamma M^{-1}A_{\gamma}\circ (I-\gamma M^{-1}B)x\nonumber\\[6pt]
		&\Leftrightarrow A_{\gamma}\circ(I-\gamma M^{-1}B)x+Bx=0.\nonumber
	\end{align}
	\item [(b)] In (a) apply $(B_{-\gamma}, A_\gamma)$ in place of $(A, B)$ and using Proposition \ref{l2.1}, we have the result.
	\item [(c)] Let $x\in \operatorname{Zer}(A+B)$. By (a), we have
	\begin{align}
		&A_\gamma \circ(I-\gamma M^{-1}B)x+B\circ J_{\gamma A}^M\circ(I-\gamma M^{-1}B)x=0\nonumber\\[6pt]
		&\Leftrightarrow (A_\gamma + B\circ J_{\gamma A}^M)\circ(I-\gamma M^{-1}B)x=0\nonumber\\[6pt]
		&\Leftrightarrow (I-\gamma M^{-1}B)x\in \operatorname{Zer}(B_{-\gamma}+A_{\gamma}).\nonumber
	\end{align}
	Also, for $y\in \operatorname{Zer}(B_{-\gamma}+A_{\gamma})$, from (b), we deduce
	\begin{align}
		B\circ J_{\gamma A}^M y+A_\gamma\circ(I-\gamma M^{-1}B)\circ J_{\gamma A}^My=0 &\Leftrightarrow (B+A_\gamma \circ(I-\gamma M^{-1}B))\circ J_{\gamma A}^M y=0\nonumber\\[6pt]
		&\Leftrightarrow J_{\gamma A}^M y \in \operatorname{Zer}(A+B).\nonumber
	\end{align}
	Finally, $J_{\gamma A}^M\circ(I-\gamma M^{-1}B)x=x$, and $(I-\gamma M^{-1}B)\circ J_{\gamma A}^M y=y$ for $x\in \operatorname{Zer}(A+B)$ and $y\in \operatorname{Zer}(B_{-\gamma}+A_{\gamma})$.
\end{proof}
\section{Convergence of trajectories generated by first order dynamical systems}\label{sc2}
\subsection{Operator Framework}
Consider the dynamical system
\begin{eqnarray}\label{2.10}{
		\left\{
		\begin{array}{lc@{}c@{}r}
			\dot{u}(t)=\lambda(t)[T(u(t))-u(t)]\\[6pt]
			u(0)= u_0,
		\end{array}\right.
}\end{eqnarray}
where $u_0\in H$, $T: H \to H$ is an $\alpha$-averaged operator and $\lambda:[0,\infty)\to [0,\infty)$ is a Lebesgue measurable function satisfying
\begin{align}\label{2.6}
	0<\underline{\lambda}\le \inf\limits_{t\ge 0}\lambda(t)\le \sup\limits_{t\ge 0}\lambda(t)\le \overline{\lambda},
\end{align}
where $\underline{\lambda}, \overline{\lambda}\in \mathbb{R}$.
First we establish the following result for the dynamical system (\ref{2.10}).
\begin{proposition}\label{prop2}
	Let $T:H\to H$ be an $\alpha$-averaged operator for $\alpha\in (0,1)$ with $\operatorname{Fix}(T)\neq \emptyset$. Let $u:[0,\infty)\to H$ be the unique strong global solution of the dynamical system (\ref{2.10}).
	Then we have the following:
	\begin{itemize}
		\item [(i)] The trajectory $u$ is bounded and $\dot{u}$, $(I-T)u\in L^2([0,\infty);H)$.
		\item [(ii)] $\lim\limits_{t\to \infty}\dot{u}(t)=\lim\limits_{t\to \infty}(I-T)(u(t))=0$.
		\item [(iii)] $u(t)\rightharpoonup \bar{u}\in \operatorname{Fix}(T)$ as $t\to \infty$.
	\end{itemize}
\end{proposition}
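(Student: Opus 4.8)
The plan is to build everything on the Lyapunov function $V(t):=\tfrac12\|u(t)-u^*\|^2$ for an arbitrary $u^*\in\operatorname{Fix}(T)$, together with the demiclosedness and Opial-type lemmas already recorded in Section~\ref{sc1}. First I would extract the consequence of $\alpha$-averagedness that drives the whole argument: writing $T=(1-\alpha)I+\alpha R$ with $R$ nonexpansive, the operator $R=I+\tfrac1\alpha(T-I)$ is nonexpansive, so by Remark~\ref{r2.1} the operator $\tfrac1\alpha(I-T)=I-R$ is $\tfrac12$-cocoercive; evaluating cocoercivity at the pair $(u(t),u^*)$ and using $(I-T)u^*=0$ gives
$$\langle u(t)-u^*,(I-T)u(t)\rangle\ \ge\ \tfrac{1}{2\alpha}\|(I-T)u(t)\|^2 .$$
Since $\dot u(t)=-\lambda(t)(I-T)u(t)$, differentiating $V$ then yields $\dot V(t)\le -\tfrac{\underline\lambda}{2\alpha}\|(I-T)u(t)\|^2\le 0$ for a.e.\ $t\ge 0$. (This bound also keeps the maximal solution bounded on finite intervals, so the local solution produced by the Cauchy--Lipschitz--Picard theorem---applicable since $w\mapsto\lambda(t)(Tw-w)$ is Lipschitz in $w$ uniformly in $t$, $T$ being nonexpansive, and measurable in $t$---extends to a unique global one.)

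From $\dot V\le -\tfrac{\underline\lambda}{2\alpha}\|(I-T)u\|^2$ I get part (i) directly: $V$ is nonincreasing, so $\lim_{t\to\infty}\|u(t)-u^*\|$ exists for every $u^*\in\operatorname{Fix}(T)$ and $u$ is bounded; integrating over $[0,\infty)$ gives $\int_0^\infty\|(I-T)u(t)\|^2\,dt\le\tfrac{2\alpha}{\underline\lambda}V(0)<\infty$, hence $(I-T)u\in L^2([0,\infty);H)$, and $\|\dot u(t)\|\le\overline\lambda\|(I-T)u(t)\|$ then gives $\dot u\in L^2([0,\infty);H)$.

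For part (ii) I would apply Lemma~\ref{lm2.2} to $F(t):=\|(I-T)u(t)\|^2$. Since $u$ is locally absolutely continuous and $T$ is nonexpansive (hence $1$-Lipschitz), Remark~\ref{Re1}(b) shows $t\mapsto T(u(t))$ is locally absolutely continuous with $\|\tfrac{d}{dt}T(u(t))\|\le\|\dot u(t)\|$ a.e., so $t\mapsto(I-T)u(t)$, and therefore $F$, is locally absolutely continuous and
$$\dot F(t)\ \le\ 2\|(I-T)u(t)\|\,\Big\|\tfrac{d}{dt}(I-T)u(t)\Big\|\ \le\ 4\|(I-T)u(t)\|\,\|\dot u(t)\|\ \le\ 4\overline\lambda\,F(t)\quad\text{a.e.}$$
As $F\in L^1([0,\infty))$ by (i) and $G:=4\overline\lambda F\in L^1([0,\infty))$, Lemma~\ref{lm2.2} with $p=r=1$ gives $F(t)\to 0$, i.e.\ $(I-T)u(t)\to 0$; then $\dot u(t)=-\lambda(t)(I-T)u(t)\to 0$ since $\lambda$ is bounded above.

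Finally, for part (iii) I would invoke Lemma~\ref{2l1} with $C=\operatorname{Fix}(T)$. Its hypothesis~(i) is the existence of $\lim_{t\to\infty}\|u(t)-u^*\|$ established above, and for hypothesis~(ii): if $u(t_n)\rightharpoonup\bar u$ along some $t_n\to\infty$, then $(I-T)u(t_n)\to 0$ strongly by part (ii), so Lemma~\ref{l2.4} (demiclosedness of $I-T$ at $0$, with $C=H$) gives $\bar u\in\operatorname{Fix}(T)$; hence $u(t)\rightharpoonup\bar u\in\operatorname{Fix}(T)$. I expect the only delicate point to be the absolute-continuity bookkeeping in (ii)---verifying that $F$ is locally absolutely continuous with the stated derivative estimate, which is precisely what lets Lemma~\ref{lm2.2} apply; the rest is the standard Lyapunov/Opial pattern.
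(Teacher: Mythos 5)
Your proof is correct and follows essentially the same route as the paper: the Lyapunov function $\tfrac12\|u(t)-u^*\|^2$ combined with the $\tfrac{1}{2\alpha}$-cocoercivity of $I-T$ (obtained through the nonexpansive part $R$), integration to get the $L^2$ bounds, Lemma \ref{lm2.2} applied to $\|(I-T)u(t)\|^2$ for part (ii), and Lemma \ref{2l1} with the demiclosedness Lemma \ref{l2.4} for part (iii). The only differences are cosmetic (you obtain $(I-T)u\in L^2$ first and deduce $\dot u\in L^2$, while the paper does the reverse, and your majorant $4\overline\lambda F$ in (ii) replaces the paper's Young-inequality bound), so the argument matches the paper's proof.
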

\begin{proof}
	(i) Let $u^*\in \operatorname{Fix}(T)$. Define $k(t):=\frac{1}{2}\|u(t)-u^*\|^2$, $t\in[0,\infty)$. Then $\dot{k}(t)=\langle u(t)-u^*, \dot{u}(t)\rangle$. From (\ref{2.10}), and the fact that $(I-T)u^*=0$, we have for every $t\in [0,\infty)$
	\begin{align}
		\dot{k}(t)+\lambda(t)\langle u(t)-u^*, (I-T)(u(t))- (I-T)u^*\rangle= 0. \label{2.5}
	\end{align}
	Since $T$ is $\alpha$-averaged operator, so there exists a nonexpansive operator $R:H\to H$ such that $T= (1-\alpha)I+\alpha R$ and $\operatorname{Fix}(T)= \operatorname{Fix}(R)$. From (\ref{2.5}) we have
	\begin{align}
		\dot{k}(t)+\alpha\lambda(t)\langle u(t)-u^*, (I-R)(u(t))- (I-R)u^*\rangle= 0.\label{2.5a}
	\end{align}
	Remark \ref{r2.1} shows that $I-R$ is $\frac{1}{2}$-cocoercive operator. From (\ref{2.5a}) we obtain
	\begin{align}
		\dot{k}(t)+\frac{\alpha\lambda(t)}{2}\|(I-R)(u(t))\|^2\le 0\nonumber,
	\end{align}
	which implies that
	\begin{align}
		\dot{k}(t)+\frac{\lambda(t)}{2\alpha}\|(I-T)(u(t))\|^2\le 0.\label{2.4a}
	\end{align}
	From (\ref{2.10}) and $(\ref{2.4a})$, we have
	\begin{align}
		\dot{k}(t)+\frac{1}{2\alpha\lambda(t)}\|\dot{u}(t)\|^2\le 0 ~for ~all~t\in [0, \infty).\nonumber
	\end{align}
	Using condition (\ref{2.6}), we get
	\begin{align}\label{2.7}
		\dot{k}(t)+\frac{1}{2\alpha\overline{\lambda}}\|\dot{u}(t)\|^2\le 0~ for~ every ~t\in [0,\infty).
	\end{align}
	From (\ref{2.7}), we get that the function $t\mapsto k(t)$ is monotonically decreasing. Also, the map $t\mapsto k(t)$ is locally absolutely continuous. Hence there exists $N_1\in \mathbb{R}$ such that
	$$k(t)\le N_1~for~all ~t\in [0,\infty).$$
	Thus, $k$ is bounded, and hence $u$ is bounded.\\
	To integrate the inequality (\ref{2.7}), we get that there is $N_2\in \mathbb{{R}}$ such that
	\begin{align}\label{2.8}
		k(t)+\frac{1}{2\alpha\overline{\lambda}}\int_{0}^{t} \|\dot{u}(t)\|^2\le N_2~~for~all~ t\in [0,\infty).
	\end{align}
	Since $k$ is bounded, so from (\ref{2.8}), we conclude that $\dot{u}(t)\in L^2([0,\infty);H)$. Hence, from $(\ref{2.10})$ and (\ref{2.6}), we get that $(I-T)u\in L^2([0,\infty);H)$.
	
	(ii) Using Remark \ref{Re1}(b) and cocoercivity of $I-R$, we have
	\begin{align}
		\frac{d}{dt}\left(\frac{1}{2}\|(I-T)(u(t))\|^2\right)&=\frac{d}{dt}\alpha^2\left(\frac{1}{2}\|(I-R)(u(t))\|^2\right)\nonumber\\&=\alpha^2\left\langle (I-R)(u(t)), \frac{d}{dt}((I-R)(u(t)))\right\rangle \nonumber\\
		&\le \frac{\alpha^2}{2}\|(I-R)(u(t))\|^2+2\alpha^2\|\dot{u}(t)\|^2\nonumber\\
		&= \frac{1}{2}\|(I-T)(u(t))\|^2+2\alpha^2\|\dot{u}(t)\|^2~~\forall t\in [0,\infty).\nonumber
	\end{align}
	By using Lemma \ref{lm2.2} and part (i) we obtain $\lim\limits_{t\to \infty}(I-T)(u(t))=0$, and from (\ref{2.6}) and (\ref{2.5}), we conclude that $\lim\limits_{t\to \infty} \dot{u}(t)=0.$
	
	(iii) We show that both the assumptions of Lemma \ref{2l1} are satisfied.\\
	As $k$ is bounded and $t\mapsto k(t)$ is monotonically decreasing, we observe that $\lim\limits_{t\to \infty}k(t)$ exists and belongs to the set of real numbers. So, $\lim\limits_{t\to \infty}\|u(t)-u^*\|$ exists.\\
	Let $\bar{u}$ be a weak sequential cluster point of $u(t)$, i.e, there exists a sequence $t_n\to \infty$ (as $n\to \infty$) such that $\{u(t_n)\}\rightharpoonup\bar{u}$. Using Lemma \ref{l2.4} and part (ii), we deduce that $\bar{u}\in \operatorname{Fix(R)}= \operatorname{Fix (T)}$ and the conclusion follows.
\end{proof}

In order to study the convergence behaviour of trajectories generated by dynamical systems (\ref{meq}) and (\ref{meq2}), we need the following assumptions:
\begin{itemize}
	\item [(A1)]The operator $A:H\to 2^H$ is maximally $(\gamma-\alpha)$-cohypomonotone.\label{a1}
	\item [(A2)]The operator $B:H\to H$ is $\beta$-cocoercive.\label{a2}
	\item [(A3)]$\operatorname{Zer}(A+B)\ne\emptyset$.
	\item [(A4)] The operator $M:H\to H$ is strongly positive.
\end{itemize}

Now we are ready to establish weak and strong convergence of trajectories generated by backward-forward first order dynamical system (\ref{meq}).
\begin{theorem}\label{th1}
	Let assumptions (A1), (A2), (A3) and (A4) hold and $\lambda:[0,\infty)\to [0,\infty)$ be a Lebesgue measurable function satisfying  condition (\ref{2.6}). Let $u:[0,\infty)\to H$ be the unique strong solution of (\ref{meq}) and $u_0\in H$. Let $\gamma\in (0,2\kappa)$, where $\kappa\in (0,\infty)$ such that
	\begin{align}\label{eq3.8}
		\kappa\le \|M\|\min\{\alpha,\beta\}.
	\end{align} Set $\delta:=\frac{2\kappa+\gamma}{2\gamma}$. Then the following statements hold:
	\begin{itemize}
		\item [(i)] The trajectory $u$ is bounded and $\dot{u}$, $(I-(I-\gamma M^{-1}B)\circ J_{\gamma A}^M )u\in L^2([0,\infty);H)$.
		\item[(ii)] $\lim\limits_{t\to\infty}\dot{u}(t)=\lim\limits_{t\to\infty}(I-(I-\gamma M^{-1}B)\circ J_{\gamma A}^M )(u(t))=0$.
		\item[(iii)] $u(t) \rightharpoonup u^*\in$ $\operatorname{Zer}(B_{-\gamma}+A_{\gamma})$ as $t\to \infty$.
		\item[(iv)] If $u^*\in \operatorname{Zer}(B_{-\gamma}+A_{\gamma})$, then $A_\gamma(u(\cdot))-A_{\gamma}u^*\in L^2([0,\infty);H)$ and $\lim\limits_{t\to \infty}A_{\gamma}(u(t))=A_{\gamma}u^*$.
		\item[(v)] $M^{-1}A_\gamma$ is constant on $\operatorname{Zer}(B_{-\gamma}+A_{\gamma})$.
		\item [(vi)] $v(t) \rightharpoonup v^*\in$ $\operatorname{Zer}(A+B)$ as $t\to \infty$, where $v(t)= J_{\gamma A}^M(u(t))$.
		\item [(vii)] $\lim\limits_{t\to\infty}B(v(t))= Bv^*= -A_{\gamma}u^*$.
		\item [(viii)] If $B_{-\gamma}$ or $A_{\gamma}$ is uniformly monotone, then $u(t)\to u^*\in \operatorname{Zer}(B_{-\gamma}+A_\gamma)$.
		\item [(ix)] If $B_{-\gamma}$ is $\rho$-strongly monotone for $\rho>0$ and choose $\eta>0$ fulfilling the condition:
		\begin{align}\label{9}
			\frac{1}{2\alpha}+\frac{\eta\|M\|^2}{2\gamma^2}\le \rho+\frac{\|M\|\overline{\lambda}}{\gamma}.
		\end{align}
		Let $u^*$ be an equilibrium point of dynamical system (\ref{meq}). Then we have the following:
		\begin{itemize}
			\item [(a)] If $\frac{1}{\eta\rho}<4$, then $u^*$ is globally exponentially stable.
			\item [(b)] If $\frac{1}{\eta\rho}=4$, then $u^*$ is global monotone attractor.
		\end{itemize}
	\end{itemize}
\end{theorem}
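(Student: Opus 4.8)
The plan is to reduce everything to the operator framework of Proposition~\ref{prop2} and then run a Lyapunov estimate for part~(ix). By Proposition~\ref{l2.1}(a) and (c) the operator $T:=(I-\gamma M^{-1}B)\circ J_{\gamma A}^M$ driving \eqref{meq} coincides with $J_{\gamma B_{-\gamma}}^M\circ(I-\gamma M^{-1}A_{\gamma})$, i.e.\ the forward--backward operator attached to the equivalent inclusion $0\in(B_{-\gamma}+A_{\gamma})y$. By Proposition~\ref{l1}, $M^{-1}B$ and $M^{-1}A_{\gamma}$ are $\kappa$-cocoercive with respect to $\langle\cdot,\cdot\rangle_M$; hence by Lemma~\ref{l4} the maps $I-\gamma M^{-1}B$ and $J_{\gamma A}^M=I-\gamma M^{-1}A_{\gamma}$ are $\tfrac{\gamma}{2\kappa}$-averaged in the Hilbert space $(H,\langle\cdot,\cdot\rangle_M)$, and by Lemma~\ref{l3} the composition $T$ is $\tfrac1\delta$-averaged there. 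Since $\operatorname{Zer}(A+B)\neq\emptyset$ by (A3), Proposition~\ref{lm1}(c) gives $\operatorname{Fix}(T)=\operatorname{Zer}(B_{-\gamma}+A_{\gamma})\neq\emptyset$, so Proposition~\ref{prop2} (used in the $M$-metric, whose topology equals that of $\|\cdot\|$) yields (i), (ii) and (iii) at once; moreover $\rho$-strong monotonicity of $B_{-\gamma}$ in (ix) forces $\operatorname{Zer}(B_{-\gamma}+A_{\gamma})$ to be a singleton $\{u^{*}\}$, which the trajectory approaches by (iii)/(viii). It remains to quantify the rate.

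Parts (iv)--(vii) follow by isolating, inside the energy inequality behind (i)--(ii), the two pieces of the residual $(I-T)u(t)=\gamma M^{-1}(A_{\gamma}u(t)-A_{\gamma}u^{*})+\gamma M^{-1}(Bv(t)-Bv^{*})$, where $v(t):=J_{\gamma A}^M u(t)$, $v^{*}:=J_{\gamma A}^M u^{*}$ and $A_{\gamma}u^{*}+Bv^{*}=0$ by Proposition~\ref{lm1}(b); one then applies the cocoercivity estimates of Proposition~\ref{l1}, for (vi) also uses that $v(\cdot)$ is the image of $u(\cdot)$ under the nonexpansive map $J_{\gamma A}^M$ together with Lemma~\ref{l2.4}, and for (viii) upgrades the weak limit of (iii) to a strong one since $\langle u(t)-u^{*},(I-T)u(t)\rangle_M\to0$ combined with uniform monotonicity forces the modulus $\phi(\|\cdot\|)\to0$.

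For (ix) put $k(t):=\tfrac12\|u(t)-u^{*}\|_M^{2}$, locally absolutely continuous by (i). From \eqref{meq} and $(I-T)u^{*}=0$ we get $\dot k(t)=-\lambda(t)\langle u(t)-u^{*},(I-T)u(t)\rangle_M$ for a.e.\ $t\ge0$. Using the decomposition above and $u(t)-u^{*}=(v(t)-v^{*})+\gamma M^{-1}(A_{\gamma}u(t)-A_{\gamma}u^{*})$, we bound $\langle u(t)-u^{*},(I-T)u(t)\rangle_M$ from below: the $\kappa$-cocoercivity of $M^{-1}A_{\gamma}$ handles the $A_{\gamma}$-part, and for the $B$-part the identity $Tu(t)=J_{\gamma B_{-\gamma}}^M v(t)$ gives $Bv(t)\in B_{-\gamma}(Tu(t))$ (and $Bv^{*}\in B_{-\gamma}(u^{*})$), so $\rho$-strong monotonicity of $B_{-\gamma}$ yields $\langle Bv(t)-Bv^{*},Tu(t)-u^{*}\rangle\ge\rho\|Tu(t)-u^{*}\|^{2}$. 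Combining these, using $\|Tu(t)-u^{*}\|^{2}\ge\|M\|^{-1}\|Tu(t)-u^{*}\|_M^{2}$ and $Tu(t)-u^{*}=(u(t)-u^{*})-(I-T)u(t)$, bounding $\lambda(t)$ between $\underline{\lambda}$ and $\overline{\lambda}$, and splitting the resulting mixed inner products by Young's inequality with the parameter $\eta$, the terms made nonnegative by condition \eqref{9} may be dropped and one is left with
\[
\dot k(t)+c\,\underline{\lambda}\Bigl(\rho-\tfrac{1}{4\eta}\Bigr)k(t)\le0\qquad\text{for a.e.\ }t\ge0,
\]
for some fixed constant $c>0$.

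If $\tfrac{1}{\eta\rho}<4$ then $\rho-\tfrac{1}{4\eta}>0$; integrating the inequality gives $k(t)\le k(0)\exp\bigl(-2c\underline{\lambda}(\rho-\tfrac{1}{4\eta})t\bigr)$, and since $m\|w\|^{2}\le\|w\|_M^{2}\le\|M\|\,\|w\|^{2}$ this yields $\|u(t)-u^{*}\|\le\sqrt{\|M\|/m}\,\|u_{0}-u^{*}\|\exp\bigl(-c\underline{\lambda}(\rho-\tfrac{1}{4\eta})t\bigr)$, so $u^{*}$ is globally exponentially stable, which is (a). If $\tfrac{1}{\eta\rho}=4$ the coefficient of $k(t)$ vanishes, so $\dot k(t)\le0$ a.e.\ and $t\mapsto\|u(t)-u^{*}\|_M$, hence $t\mapsto\|u(t)-u^{*}\|$, is nonincreasing, which is (b). The main obstacle is the estimate in the third paragraph: transporting the standard-norm hypotheses on $B_{-\gamma}$, on $B$ and on $\lambda$ into the $M$-inner product in which the cocoercivity facts of Proposition~\ref{l1} are stated (this is exactly what introduces the $\|M\|$ and $\|M\|^{2}$ weights into \eqref{9}), and choosing where to place the Young split so that the surviving coefficient of $k(t)$ comes out proportional to $1-\tfrac{1}{4\eta\rho}$, which is what cleanly separates the exponential regime from the boundary monotone-attractor regime.
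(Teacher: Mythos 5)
Your proposal is correct and follows the paper's own route essentially step for step: the reduction of \eqref{meq} to $\dot u=\lambda(Tu-u)$ with $T$ a $\tfrac1\delta$-averaged composition (Propositions~\ref{l2.1}, \ref{l1}, Lemmas~\ref{l4}, \ref{l3}, then Propositions~\ref{prop2}, \ref{lm1}) for (i)--(iii), the same cocoercivity-based residual estimates for (iv)--(viii), and for (ix) the same strong-monotonicity inequality --- your graph pairs $(Tu(t),Bv(t))$ and $(u^{*},Bv^{*})$ in $\operatorname{gra}(B_{-\gamma})$ are exactly the paper's inclusion \eqref{3.7} --- combined with Young's inequality with parameter $\eta$, condition \eqref{9} and the bounds on $\lambda$, yielding $\dot k+Ck\le0$ with $C$ proportional to $\underline{\lambda}\bigl(1-\tfrac{1}{4\eta\rho}\bigr)$. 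The only real deviation is that you run the final Lyapunov estimate in the $\|\cdot\|_M$-norm, whereas the paper takes $k(t)=\tfrac12\|u(t)-u^{*}\|^{2}$ in the Euclidean norm, where the coefficient $1-\tfrac{1}{4\eta\rho}$ and condition \eqref{9} come out directly; this is a cosmetic difference (it merely shifts norm-equivalence constants into $C$), not a different method.
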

\begin{proof}
	(i)-(iii) We can write the dynamical system (\ref{meq}) in the form
	\begin{eqnarray}{
			\left\{
			\begin{array}{lc@{}c@{}r}
				\dot{u}(t)=\lambda (t)[T(u(t))-u(t)]\\[6pt]
				u(0)= u_0,
			\end{array}\right.
	}\end{eqnarray}
	where $T= (I- \gamma M^{-1}B)(I+\gamma M^{-1}A)^{-1}.$ From Proposition \ref{l2.1}, $T= (I-\gamma M^{-1}B)(I-\gamma M^{-1}A_\gamma)$. Since both $M^{-1}B$ and $M^{-1}A_\gamma$ are $\kappa$-cocoercive. From Lemma \ref{l4}, both $I- \gamma M^{-1}B$ and $I- \gamma M^{-1}A_\gamma$ are $\frac{\gamma}{2\kappa}$-averaged. Hence, by Lemma \ref{l3}, $T$ is $\frac{1}{\delta}$-averaged. Now, the statements (i)-(iii) follow from Propositions \ref{prop2} and \ref{lm1}, by observing that $\operatorname{Fix}(T)= \operatorname{Zer}(B_{-\gamma}+A_{\gamma})$.
	
	(iv) Let $u^*\in \operatorname{Zer}(B_{-\gamma}+A_{\gamma})$. From (\ref{meq}), we have
	\begin{align*}
		\frac{\dot{u}(t)}{\lambda(t)}+ u(t)= (I- \gamma M^{-1}B)(I-\gamma M^{-1}A_{\gamma})(u(t)),
	\end{align*}
	which implies that
	\begin{align*}
		-\frac{\dot{u}(t)}{\gamma\lambda(t)}- M^{-1}A_{\gamma}(u(t))= M^{-1}B(I-\gamma M^{-1}A_{\gamma})(u(t)).
	\end{align*}
	From Proposition \ref{lm1}(b), we have
	\begin{align*}
		-M^{-1} A_\gamma u^*= M^{-1}B(I-\gamma M^{-1}A_{\gamma})u^*.
	\end{align*}
	Since operators $M^{-1}B$ and $M^{-1}A_\gamma$ are $\kappa$-cocoercive, we deduce for every $t\in [0,\infty)$
	\begin{align}
		&\kappa\left\|-\frac{\dot{u}(t)}{\gamma\lambda(t)}- M^{-1}(A_{\gamma}(u(t))-A_{\gamma}u^*)\right\|^2\nonumber\\[6pt]
		&~~~\le\left\langle -\frac{\dot{u}(t)}{\gamma\lambda(t)}- M^{-1}(A_{\gamma}(u(t))-A_{\gamma}u^*), u(t)-u^*-\gamma(M^{-1}A_\gamma(u(t))-M^{-1}A_\gamma u^*)\right\rangle\nonumber\\[6pt]
		&~~~= \left\langle -\frac{\dot{u}(t)}{\gamma\lambda(t)}, u(t)-u^*-\gamma(M^{-1}A_\gamma(u(t))-M^{-1}A_\gamma u^*)\right\rangle \nonumber\\[6pt]
		&~~~-\langle M^{-1}A_\gamma(u(t))-M^{-1}A_\gamma u^*, u(t)-u^*\rangle+ \gamma\|M^{-1}A_\gamma(u(t))- M^{-1}A_\gamma u^*\|^2\nonumber\\[6pt]
		&~~~\le \left\langle -\frac{\dot{u}(t)}{\gamma\lambda(t)}, u(t)-u^*\right\rangle+ \gamma\left\langle \frac{\dot{u}(t)}{\gamma\lambda(t)}, M^{-1}A_\gamma(u(t))-M^{-1}A_\gamma u^*\right\rangle\nonumber\\[6pt]
		&~~~-\kappa\|M^{-1}A_\gamma(u(t))- M^{-1}A_\gamma u^*\|^2+\gamma\|M^{-1}A_\gamma(u(t))- M^{-1}A_\gamma u^*\|^2. \label{e3.7}
	\end{align}
	Also,
	\begin{align}
		\kappa\left\|-\frac{\dot{u}(t)}{\gamma\lambda(t)}- M^{-1}(A_{\gamma}(u(t))-A_{\gamma}u^*)\right\|^2=& \kappa\left\|\frac{\dot{u}(t)}{\gamma\lambda(t)}\right\|^2+\kappa\|M^{-1}A_\gamma(u(t))- M^{-1}A_\gamma u^*\|^2\nonumber\\[6pt]
		&+ 2\kappa\left\langle \frac{\dot{u}(t)}{\gamma\lambda(t)}, M^{-1}A_\gamma(u(t))- M^{-1}A_\gamma u^*\right\rangle.\label{e3.8}
	\end{align}
	From (\ref{e3.7}) and (\ref{e3.8}), we have
	\begin{align}
		&(2\kappa-\gamma)\|M^{-1}A_\gamma(u(t))- M^{-1}A_\gamma u^*\|^2\nonumber\\[6pt]
		&~~~ \le(\gamma-2\kappa)\left\langle \frac{\dot{u}(t)}{\gamma\lambda(t)},M^{-1}A_\gamma(u(t))- M^{-1}A_\gamma u^*\right\rangle\nonumber\\[6pt]
		&~~~-\left\langle \frac{\dot{u}(t)}{\gamma\lambda(t)}, u(t)-u^*\right\rangle- \frac{\kappa}{\gamma^2\lambda^2(t)}\|\dot{u}(t)\|^2\nonumber\\[6pt]
		&~~~\le \frac{(\gamma-2\kappa)}{2}\left\|\frac{\dot{u}(t)}{\gamma\lambda(t)}\right\|^2 +\frac{(\gamma-2\kappa)}{2}\|M^{-1}A_\gamma(u(t))- M^{-1}A_\gamma u^*\|^2\nonumber\\[6pt]
		&~~~-\left\langle \frac{\dot{u}(t)}{\gamma\lambda(t)}, u(t)-u^*\right\rangle\nonumber\\[6pt]
		& ~~~\le \frac{\gamma}{2}\left\|\frac{\dot{u}(t)}{\gamma\lambda(t)}\right\|^2+
		\frac{(\gamma-2\kappa)}{2}\|M^{-1}A_\gamma(u(t))- M^{-1}A_\gamma u^*\|^2-\frac{\kappa}{\gamma^2\lambda^2(t)}\|\dot{u}(t)\|^2\nonumber\\[6pt]&~~~-\left\langle \frac{\dot{u}(t)}{\gamma\lambda(t)}, u(t)-u^*\right\rangle\nonumber\\[6pt]
		& ~~~\le \frac{\gamma}{2}\left\|\frac{\dot{u}(t)}{\gamma\lambda(t)}\right\|^2+
		\frac{(\gamma-2\kappa)}{2}\|M^{-1}A_\gamma(u(t))- M^{-1}A_\gamma u^*\|^2-\left\langle \frac{\dot{u}(t)}{\gamma\lambda(t)}, u(t)-u^*\right\rangle.\nonumber
	\end{align}
	By using the function $k:[0,\infty)\to \mathbb{R}$, $k(t)=\frac{1}{2}\|u(t)-u^*\|^2$ and the fact that $\dot{k}(t)=\langle u(t)-u^*, \dot{u}(t)\rangle$, we obtain
	\begin{align*}
		\left((2\kappa-\gamma)-\frac{(2\kappa-\gamma)}{2}\right)\|M^{-1}A_\gamma(u(t))- M^{-1}A_\gamma u^*\|^2+\frac{1}{\gamma\lambda(t)}\dot{k}(t) \le&\frac{1}{2\gamma\lambda^2(t)}\left\| \dot{u}(t)\right\|^2.
	\end{align*}
	Taking into accounts the bounds of $\lambda$, we deduce for every $t\in [0, \infty)$
	\begin{align*}
		\left(\frac{2\kappa-\gamma}{2}\right)\|M^{-1}A_\gamma(u(t))- M^{-1}A_\gamma u^*\|^2+\frac{1}{\gamma\bar{\lambda}}\dot{k}(t)\le\frac{1}{2\gamma\overline{\lambda}^2}\left\| \dot{u}(t)\right\|^2.
	\end{align*}
	Integrating above equation from $0$ to $\tau$,  we get that for every $\tau\in [0, \infty)$
	\begin{align*}
		\left(\frac{2\kappa-\gamma}{2}\right)\int_{0}^{\tau}\|M^{-1}A_\gamma(u(t))- M^{-1}A_\gamma u^*\|^2 dt+\frac{1}{\gamma\bar{\lambda}}(k(\tau)-k(0))\nonumber\\\le \frac{1}{2\gamma\overline{\lambda}^2}\int_{0}^{\tau}\|\dot{u}(t)\|^2.
	\end{align*}
	Since $\dot{u}\in L^2([0,\infty); H)$, $\gamma\in(0,2\kappa)$, and $k(\tau)\ge 0$ for every $\tau\in [0, \infty)$, it follows that $M^{-1}A_\gamma(u(t))- M^{-1}A_\gamma u^*\in L^2([0,\infty); H)$ and hence $A_\gamma(u(t))- A_\gamma u^*\in L^2([0,\infty); H)$.\\
	From Remark \ref{Re1}, we get
	\begin{align}
		\frac{d}{dt} \left(\frac{1}{2}\|A_\gamma(u(t))-A_\gamma u^*\|\right)
		&= \left\langle A_\gamma(u(t))-A_\gamma u^*, \frac{d}{dt}\left(A_{\gamma}(u(t))\right) \right\rangle\nonumber\\[6pt]
		&\le \frac{1}{2}\|A_\gamma(u(t))-A_\gamma u^*\|^2+\frac{1}{2\alpha^2}\|\dot{u}(t)\|^2,\nonumber
	\end{align}
	and by Lemma \ref{lm2.2}, we have $\lim\limits_{t\to\infty}A_\gamma(u(t))=A_\gamma u^*$.
	
	(v) Let $u$ and $v$ are two zeros of $B_{-\gamma}+A_{\gamma}$, by Proposition \ref{lm1}, we obtain $-M^{-1}A_{\gamma}u= M^{-1}B(u-\gamma M^{-1}A_{\gamma}u)$ and $-M^{-1}A_{\gamma}v= M^{-1}B(v-\gamma M^{-1}A_{\gamma}v)$. Since $M^{-1}B$ is $\kappa$-cocoercive, we have
	\begin{align}
		&\kappa\|-M^{-1}A_\gamma v+M^{-1}A_\gamma u\|^2 \nonumber\\[6pt]
		&\le \langle -M^{-1}A_\gamma v+M^{-1}A_\gamma u, v-\gamma M^{-1}A_\gamma v- u+\gamma M^{-1}A_\gamma u\rangle\nonumber\\[6pt]
		&= \gamma\|-M^{-1}A_\gamma v+M^{-1}A_\gamma u\|^2-\langle -M^{-1}A_\gamma v+M^{-1}A_\gamma u, -v+u\rangle.\nonumber
	\end{align}
	By using $\kappa$-cocerciveness of $M^{-1}A_\gamma$, we obtain
	\begin{align*}
		\kappa\|-M^{-1}A_\gamma v+M^{-1}A_\gamma u\|^2 &\le -\kappa\|-M^{-1}A_\gamma v+M^{-1}A_\gamma u\|^2 \\
		&+\gamma\|-M^{-1}A_\gamma v+M^{-1}A_\gamma u\|^2.
	\end{align*}
	Since $\gamma <2\kappa$, so we get that $\|M^{-1}A_\gamma u- M^{-1}A_\gamma v\|^2=0$. Hence $M^{-1}A_\gamma$ is constant on $\operatorname{Zer}(B_{-\gamma}+A_{\gamma})$.
	
	(vi) From statements (iii), (iv) and Proposition \ref{l2.1}, we have $$v(t)= J_{\gamma A}^ M(u(t))= u(t)-\gamma M^{-1}A_{\gamma} (u(t))$$ converges weakly to $v^*= u^*-\gamma M^{-1} Bu^*$. From fixed point equality, we get $$u^*=(I-\gamma M^{-1}B)\circ J_{\gamma A}^M u^*= (I-\gamma M^{-1}B)v^*.$$ Enjoying the operator $J_{\gamma A}^ M$ with the equality $u^*= (I-\gamma M^{-1}B)v^*$ gives $v^*= J_{\gamma A}^ M\circ (I-\gamma M^{-1}B)v^*$, which declares that $v^*$ is fixed point of $J_{\gamma A}^ M\circ (I-\gamma M^{-1}B)$, hence a zero of $A+B$.
	
	(vii) Note,
	\begin{align*}
		M^{-1}B(v(t))&= \frac{1}{\gamma}\left(v(t)-J_{\gamma B_{-\gamma}}^M(v(t))\right)\\
		&= -M^{-1}A_{\gamma}u(t)+\frac{1}{\gamma}\left(v(t)-J_{\gamma B_{-\gamma}}^M(v(t))\right),
	\end{align*}
	which conclude that $B(v(t))\to -A_{\gamma}u^*$ as $t\to \infty$. Finally, from statement (vi) we deduce that $-A_{\gamma}u^*= Bv^*$.\newline
	(viii) Assume that $B_{-\gamma}$ is uniformly monotone with modulus function $\phi_{B_{-\gamma}}:[0,\infty)\to [0,\infty)$. Let $u^*\in \operatorname{Zer}(B_{-\gamma}+A_{\gamma})$ be an unique element.
	From Proposition \ref{l2.1}, we have for every $t\in [0,\infty)$
	\begin{align}\label{3.7}
		-M\frac{\dot{u}(t)}{\gamma\lambda(t)}- A_{\gamma}(u(t))\in B_{-\gamma}\left( \frac{\dot{u}(t)}{\lambda(t)}+u(t)\right).
	\end{align}
	Combining (\ref{3.7}) with $-A_{\gamma}u^*\in B_{-\gamma}u^*$, we have for every $t\in [0,\infty)$
	\begin{align*}
		&\phi_{B_{-\gamma}}\left(\left\|\frac{\dot{u}(t)}{\lambda(t)}+u(t)-u^*\right\|\right)\\[6pt]
		&\le \left\langle\frac{\dot{u}(t)}{\lambda(t)}+u(t)-u^*, A_\gamma u^*-M\frac{\dot{u}(t)}{\gamma\lambda(t)}-A_{\gamma}(u(t))\right\rangle\\[6pt]
		&=\left\langle\frac{\dot{u}(t)}{\lambda(t)},A_\gamma u^*-M\frac{\dot{u}(t)}{\gamma\lambda(t)} -A_{\gamma}(u(t))\right\rangle+\left\langle u(t)-u^*, M\frac{\dot{u}(t)}{\gamma\lambda(t)}\right\rangle\nonumber\\[6pt]
		&-\langle u(t)-u^*, A_{\gamma}(u(t))- A_{\gamma}u^*\rangle\nonumber,
	\end{align*}
	which combines with the monotonicity of $A_{\gamma}$ yields
	\begin{align}
		&\phi_{B_{-\gamma}}\left(\left\|\frac{\dot{u}(t)}{\lambda(t)}+u(t)-u^*\right\|\right)\nonumber\\[6pt]
		&\le\left\langle\frac{\dot{u}(t)}{\lambda(t)},A_\gamma u^* -A_{\gamma}(u(t))\right\rangle- \frac{1}{\lambda^2(t)\gamma}\|\dot{u}(t)\|_M^2+\left\langle u(t)-u^*, M\frac{\dot{u}(t)}{\gamma\lambda(t)}\right\rangle\nonumber\\[6pt]
		&\le\left\langle\frac{\dot{u}(t)}{\lambda(t)},A_\gamma u^* -A_{\gamma}(u(t))\right\rangle+\left\langle u(t)-u^*, M\frac{\dot{u}(t)}{\gamma\lambda(t)}\right\rangle.\label{8}
	\end{align}
	From parts (i)-(iv), (\ref{8}) and the fact that $\lambda$ is bounded by positive constants, we get
	\begin{align*}
		\lim\limits_{t\to \infty}\phi_{B_{-\gamma}}\left(\left\|\frac{\dot{u}(t)}{\lambda(t)}+u(t)-u^*\right\|\right)=0.
	\end{align*}
	Since the function $\phi_{B_{-\gamma}}$ is increasing vanishes to 0, so we have $$\left(\frac{\dot{u}(t)}{\lambda(t)}+u(t)-u^*\right)\to 0 ~\text{as}~ t\to \infty.$$ Using statement (ii) and the boundedness of $\lambda$ we get that $u(t)$ converges strongly to $u^*$ as $t\to\infty$.
	
	(ix) Suppose that $B_{-\gamma}$ is $\rho$-strong monotonicity. Combining (\ref{3.7}) with $-A_\gamma u^*\in B_{-\gamma}u^*$, we have
	\begin{align}
		&\rho\left\|\frac{\dot{u}(t)}{\lambda(t)}+u(t)-u^*\right\|^2\nonumber\\
		&\le \left\langle \frac{\dot{u}(t)}{\lambda(t)}+u(t)-u^*,A_{\gamma}u^* -\frac{M}{\gamma \lambda(t)}\dot{u}(t)-A_{\gamma}(u(t))\right\rangle \nonumber\\[6pt]
		& = \left\langle \frac{\dot{u}(t)}{\lambda(t)}, A_{\gamma}u^* -\frac{M}{\gamma \lambda(t)}\dot{u}(t)-A_{\gamma}(u(t))\right\rangle+ \left\langle u(t)-u^*, A_{\gamma}u^* -\frac{M}{\gamma \lambda(t)}\dot{u}(t)-A_{\gamma}(u(t))\right\rangle\nonumber\\[6pt]
		&\le \frac{1}{2\alpha\lambda^2(t)}\|\dot{u}(t)\|^2+\frac{\alpha}{2}\|A_\gamma u^*- A_{\gamma}(u(t))\|^2-\frac{1}{\gamma\lambda(t)}\|\dot{u}(t)\|_M^2+ \left\langle  u(t)-u^*, \frac{-M}{\gamma \lambda(t)}\dot{u}(t) \right\rangle \nonumber\\[6pt]
		&-\langle u(t)-u^*, A_{\gamma}(u(t))- A_{\gamma}u^*\rangle. \nonumber
	\end{align}
	Using $\alpha$-cocoercivity of $A_\gamma$, we have
	\begin{align}
		&\rho\left\|\frac{\dot{u}(t)}{\lambda(t)}+u(t)-u^*\right\|^2\nonumber\\
		&\le \frac{1}{2\alpha\lambda^2(t)}\|\dot{u}(t)\|^2-\frac{1}{\gamma\lambda(t)}\|\dot{u}(t)\|_M^2+ \left\langle  u(t)-u^*, \frac{-M}{\gamma \lambda(t)}\dot{u}(t) \right\rangle \nonumber\\[6pt]
		&\le \frac{1}{2\alpha\lambda^2(t)}\|\dot{u}(t)\|^2-\|M\| \frac{1}{\gamma\lambda(t)}\|\dot{u}(t)\|^2+\frac{1}{2\eta}\|u(t)-u^*\|^2+\frac{\eta\|M\|^2}{2\gamma^2\lambda^2(t)}\|\dot{u}(t)\|^2\nonumber\\[6pt]
		&= \left( \frac{1}{2\alpha\lambda^2(t)}- \frac{\|M\|}{\gamma\lambda(t)}+\frac{\eta\|M\|^2}{2\gamma^2\lambda^2(t)} \right)\|\dot{u}(t)\|^2 +\frac{1}{2\eta}k(t).\label{3.6}
	\end{align}
	Also,
	\begin{align}\label{3.8}
		\rho\left\|\frac{\dot{u}(t)}{\lambda(t)}+u(t)-u^*\right\|^2&= \frac{\rho}{\lambda^2(t)}\|\dot{u}(t)\|^2+\frac{2\rho}{\lambda(t)}\dot{k}(t)+2\rho k(t).
	\end{align}
	From (\ref{3.6}) and (\ref{3.8}), we have for $t\in [0,\infty)$
	\begin{align}
		\frac{2\rho}{\lambda(t)}\dot{k}(t)+\left(2\rho-\frac{1}{2\eta}\right)k(t)+\left( \frac{\rho}{\lambda^2(t)}-\frac{1}{2\alpha\lambda^2(t)}+ \frac{\|M\|}{\gamma\lambda(t)}-\frac{\eta\|M\|^2}{2\gamma^2\lambda^2(t)} \right)\|\dot{u}(t)\|^2\le 0.\nonumber
	\end{align}
	So, from (\ref{9}) and the fact that $\lambda(t)$ is bounded, we have for every $t\in[0, \infty)$
	\begin{align}
		\frac{2\rho}{\underline{\lambda}}\dot{k}(t)+\left(2\rho-\frac{1}{2\eta}\right)k(t)\le 0,\nonumber
	\end{align}
	which implies that
	\begin{align}
		\dot{k}(t)+\frac{2\rho-\frac{1}{2\eta}}{\frac{2\rho}{\underline{\lambda}}}k(t)\le 0.\nonumber
	\end{align}
	Now we have two cases:
	\begin{enumerate}
		\item [(a)] If $\frac{1}{\eta\rho}< 4$, then we have
		\begin{align}\label{3.9}
			\dot{k}(t)+Ck(t)\le 0,
		\end{align}
		for every $t\in[0,\infty)$, where $C:=\underline{\lambda}\left( 1-\frac{1}{4\eta\rho}\right)>0$.
		Now, by multiplying $\exp(Ct)$ with (\ref{3.9}) and integrating between $0$ and $\tau$, where $\tau\ge 0$, we have the desired result.
		\item [(b)] If $\frac{1}{\eta\rho}=4$, then $\dot{k}(t)\le 0$, so $u^*$ is global monotone attractor.
	\end{enumerate}
\end{proof}
We now study the convergence of trajectories generated by forward-backward first order dynamical system (\ref{meq2}). Before analyzing the results based on the dynamical system (\ref{meq2}), we need the following proposition.
\begin{proposition}\label{pro1}
	Let $\gamma\neq 0$ and $\lambda:[0,\infty)\to[0,\infty) $ be a function. Let $A:H\to 2^H$ be a set-valued operator such that $J_{\gamma A}^M$ is everywhere defined and single-valued and $B: H\to H$. Then we have the following:
	\begin{itemize}
		\item [(i)] For each $u_0\in H$, dynamical system (\ref{meq}) applied to $(A,B)$ is uniquely defines a function $u(t)$. For $x_0\in H$, dynamical system (\ref{meq2}) applied to $(B_{-\gamma}, A_{\gamma})$ is uniquely defines a function $x(t)$. Moreover, if $u_0= x_0$, then $u(t)= x(t)$ for $t\in [0, \infty)$.
		\item [(ii)] For each $x_0\in H$, dynamical system (\ref{meq2}) applied to $(A,B)$ is uniquely defines a function $x(t)$. For $u_0\in H$, dynamical system (\ref{meq}) applied to $(B_{-\gamma}, A_{\gamma})$ is uniquely defines a function $u(t)$. Moreover, if $u_0= x_0$, then $u(t)= x(t)$ for $t\in [0, \infty)$.
	\end{itemize}
\end{proposition}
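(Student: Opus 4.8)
The plan is to reduce both assertions to a single operator identity and then invoke uniqueness of strong solutions. First I would write the two systems out explicitly after the substitutions. Dynamical system (\ref{meq}) applied to $(A,B)$ reads $\dot u(t)=\lambda(t)\big[(I-\gamma M^{-1}B)\,J_{\gamma A}^M u(t)-u(t)\big]$, while dynamical system (\ref{meq2}) applied to $(B_{-\gamma},A_{\gamma})$ reads $\dot x(t)=\lambda(t)\big[J_{\gamma B_{-\gamma}}^M\,(I-\gamma M^{-1}A_{\gamma})\,x(t)-x(t)\big]$. Hence the content of (i) is precisely that the vector fields $x\mapsto(I-\gamma M^{-1}B)\,J_{\gamma A}^M x-x$ and $x\mapsto J_{\gamma B_{-\gamma}}^M\,(I-\gamma M^{-1}A_{\gamma})\,x-x$ coincide on $H$.

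Next I would establish the identity $(I-\gamma M^{-1}B)\circ J_{\gamma A}^M=J_{\gamma B_{-\gamma}}^M\circ(I-\gamma M^{-1}A_{\gamma})$ on $H$, which is immediate from Proposition \ref{l2.1}: part (a) gives $J_{\gamma A}^M=I-\gamma M^{-1}A_{\gamma}$, and part (c) applied with $T=B$ gives $J_{\gamma B_{-\gamma}}^M=I-\gamma M^{-1}B$; substituting both expressions into either side produces the same composite $(I-\gamma M^{-1}B)\circ(I-\gamma M^{-1}A_{\gamma})$. Before doing this I should check that all operators occurring after the substitution are single-valued and everywhere defined, so that Proposition \ref{l2.1} legitimately applies: $J_{\gamma B_{-\gamma}}^M=I-\gamma M^{-1}B$ inherits this from $B\colon H\to H$, and $A_{\gamma}$ inherits it from $J_{\gamma A}^M$ via Proposition \ref{l2.1}(b), so plugging the pair $(B_{-\gamma},A_{\gamma})$ into (\ref{meq2}) is meaningful. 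Existence and uniqueness of the strong solutions $u$ and $x$ is not a consequence of this identity but is handled exactly as in the setting of Theorem \ref{th1}: under the running framework the right-hand side $\lambda(t)[T(\cdot)-\cdot]$, with $T$ the (nonexpansive, hence Lipschitz) operator above, satisfies the Cauchy--Lipschitz hypotheses, so each initial-value problem is well posed.

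With the two vector fields shown to be equal and $u_0=x_0$, the functions $u$ and $x$ solve one and the same initial-value problem, so $u(t)=x(t)$ for all $t\in[0,\infty)$ by uniqueness; this proves (i). For (ii) I would apply (i) to the operator pair $(B_{-\gamma},A_{\gamma})$ in place of $(A,B)$ and use the involution identities $(A_{\gamma})_{-\gamma}=A$ and $(B_{-\gamma})_{\gamma}=B$, which follow from the relation $(T_{\gamma})_{\delta}=T_{\gamma+\delta}$ recalled in Section \ref{sc1}. Then ``(\ref{meq}) applied to $(B_{-\gamma},A_{\gamma})$'' coincides with ``(\ref{meq2}) applied to $(A,B)$'', which is exactly the content of (ii). Alternatively one can argue (ii) directly, in complete parallel with the previous paragraph, using instead the identity $J_{\gamma A}^M\circ(I-\gamma M^{-1}B)=(I-\gamma M^{-1}A_{\gamma})\circ J_{\gamma B_{-\gamma}}^M$, again a one-line consequence of Proposition \ref{l2.1}(a),(c).

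There is no genuine obstacle here: the proof is an algebraic identity between the generating operators together with the standard uniqueness theorem for the associated Cauchy problem. The only points needing a little care are (a) verifying that every resolvent appearing after the substitutions is single-valued and everywhere defined, so that Proposition \ref{l2.1} may be invoked, and (b) being explicit that well-posedness of the two trajectories is supplied by the same Lipschitz/averagedness considerations used elsewhere in the paper rather than by the identity itself.
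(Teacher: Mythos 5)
Your argument is correct and takes essentially the same route as the paper: the paper likewise identifies the two vector fields through Proposition \ref{l2.1} (its parts (a) and (c), the latter miscited there as ``(ii)'') and obtains (ii) from (i) by the substitution $(A,B)\leftrightarrow(B_{-\gamma},A_{\gamma})$, exactly as you do. The only divergence is cosmetic: the paper reads ``uniquely defines'' as merely saying the right-hand sides are single-valued and everywhere defined, whereas your Cauchy--Lipschitz remark quietly imports the Lipschitz/averagedness setting of Theorem \ref{th1}, which is not among the proposition's stated hypotheses but is how the paper implicitly handles well-posedness anyway.
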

\begin{proof}
	(i) The uniqueness of the dynamical system (\ref{meq2}) follows from the hypothesis (as $A_\gamma$ and $J_{\gamma B_{-\gamma}}^M$ are single-valued and everywhere defined). If we consider $T= B_{-\gamma}$ in Proposition \ref{l2.1}(ii) and $T= A_{\gamma}$ in Proposition \ref{l2.1}(a), then the system (\ref{meq}) is uniquely defined and satisfies the same relation as $(\ref{meq2})$ does.
	
	(ii) It follows from (i), by using $(A, B)$ in place of $(B_{-\gamma}, A_{\gamma})$.
\end{proof}

\begin{theorem}\label{th3}
	Let assumptions (A1), (A2), (A3) and (A4) hold and $\lambda:[0,\infty)\to [0,\infty)$ be a Lebesgue measurable function satisfying condition (\ref{2.6}). Let $x:[0,\infty)\to H$ be the unique strong solution of (\ref{meq2}) and $x_0\in H$. Let $\gamma\in (0,2\kappa)$, where $\kappa\in (0,\infty)$ satisfying (\ref{eq3.8}). Set $\delta:=\frac{2\kappa+\gamma}{2\gamma}$. Then the following statements hold:
	\begin{itemize}
		\item [(i)] The trajectory $x$ is bounded and $\dot{x}$, $(I- J_{\gamma A}^M\circ(I-\gamma M^{-1}B) )x\in L^2([0,\infty);H)$.
		\item[(ii)] $\lim\limits_{t\to\infty}\dot{x}(t)=\lim\limits_{t\to\infty}(I- J_{\gamma A}^M\circ(I-\gamma M^{-1}B))(x(t))=0$.
		\item[(iii)] $x(t) \rightharpoonup x^*\in$ $\operatorname{Zer}(A+B)$ as $t\to \infty$.
		\item[(iv)] If $x^*\in \operatorname{Zer}(A+B)$, then $B(x(\cdot))-Bx^*\in L^2([0,\infty);H)$, $\lim\limits_{t\to \infty}B(x(t))=Bx^*$.
		\item [(v)] $y(t) \rightharpoonup y^*\in$ $\operatorname{Zer}(A+B)$ as $t\to \infty$, where $y(t)= (I-\gamma M^{-1}B)(x(t))$.
		\item [(vi)] $\lim\limits_{t\to\infty}A_{\gamma}(y(t))= A_{\gamma}y^*= -Bx^*$.
		\item [(vii)] If $A$ or $B$ is uniformly monotone, then $x(t)$ converges strongly to a unique point in $\operatorname{Zer}(A+B)$ as $t\to\infty$.
		\item [(viii)] If $A$ is $\rho$-strongly monotone for some $\rho>0$ and $\eta>0$ such that:
		\begin{align*}
			\frac{1}{2\alpha}+\frac{\eta\|M\|^2}{2\gamma^2}\le \rho+\frac{\|M\|\overline{\lambda}}{\gamma}.
		\end{align*}
		Let $x^*$ be an equilibrium point of dynamical system (\ref{meq2}). Then we have the following:
		\begin{itemize}
			\item [(a)] If $\frac{1}{\eta\rho}< 4$, then $x^*$ is globally exponentially stable.
			\item [(b)] If $\frac{1}{\eta\rho}= 4$, then $x^*$ is global monotone attractor.
		\end{itemize}
	\end{itemize}
\end{theorem}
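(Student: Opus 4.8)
The plan is to obtain Theorem \ref{th3} from Theorem \ref{th1} through the equivalence of the two dynamical systems recorded in Proposition \ref{pro1}. By Proposition \ref{pro1}(ii), the strong solution $x$ of the forward--backward system (\ref{meq2}) driven by $(A,B)$ with $x(0)=x_0$ coincides on $[0,\infty)$ with the strong solution of the backward--forward system (\ref{meq}) driven by the pair $(B_{-\gamma},A_\gamma)$ with the same initial datum. Hence it suffices to check that $(B_{-\gamma},A_\gamma)$ fulfils the hypotheses of Theorem \ref{th1}, apply that theorem, and rewrite its conclusions in terms of the original data.

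For the hypotheses, Lemma \ref{l2} is the main tool. Since $(B_{-\gamma})_\gamma=B$ is everywhere defined, single-valued and $\beta$-cocoercive by (A2), Lemma \ref{l2} shows that $B_{-\gamma}$ is maximally $(\gamma-\beta)$-cohypomonotone, so (A1) holds for $B_{-\gamma}$ with $\beta$ in the role of $\alpha$; conversely (A1) together with Lemma \ref{l2} makes $A_\gamma$ everywhere defined, single-valued and $\alpha$-cocoercive, so (A2) holds for $A_\gamma$ with $\alpha$ in the role of $\beta$. Proposition \ref{lm1}(c) provides a bijection $I-\gamma M^{-1}B:\operatorname{Zer}(A+B)\to\operatorname{Zer}(B_{-\gamma}+A_\gamma)$, so (A3), i.e.\ $\operatorname{Zer}(A+B)\ne\emptyset$, gives $\operatorname{Zer}(B_{-\gamma}+A_\gamma)\ne\emptyset$, and (A4) is untouched. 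The scalar constraint (\ref{eq3.8}) is symmetric in $\alpha$ and $\beta$, hence unchanged, as are $\gamma\in(0,2\kappa)$ and $\delta=\frac{2\kappa+\gamma}{2\gamma}$. Theorem \ref{th1} therefore applies to $(B_{-\gamma},A_\gamma)$.

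To translate the conclusions back, one uses Proposition \ref{l2.1}(a),(c), which give $I-\gamma M^{-1}A_\gamma=J_{\gamma A}^M$ and $J_{\gamma B_{-\gamma}}^M=I-\gamma M^{-1}B$, together with the Yosida identities $(A_\gamma)_{-\gamma}=A$ and $(B_{-\gamma})_\gamma=B$. For example, the operator $(I-\gamma M^{-1}A_\gamma)\circ J_{\gamma B_{-\gamma}}^M$ of Theorem \ref{th1}(i)--(ii) rewrites as $J_{\gamma A}^M\circ(I-\gamma M^{-1}B)$, yielding items (i)--(ii); the weak-limit set $\operatorname{Zer}((A_\gamma)_{-\gamma}+(B_{-\gamma})_\gamma)$ of Theorem \ref{th1}(iii) equals $\operatorname{Zer}(A+B)$, yielding (iii); the Yosida operator that Theorem \ref{th1}(iv) and (vii) track is here $(B_{-\gamma})_\gamma=B$, yielding (iv) and (vi); the auxiliary trajectory $v(t)=J_{\gamma B_{-\gamma}}^M(x(t))=(I-\gamma M^{-1}B)(x(t))$ is exactly $y(t)$, yielding (v); and the monotonicity hypotheses on $B_{-\gamma}$ in Theorem \ref{th1}(viii)--(ix) become hypotheses on $(A_\gamma)_{-\gamma}=A$, yielding (vii)--(viii). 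Uniqueness of the strong limit in (vii) follows since uniform monotonicity of $A$ or of $B$ forces $A+B$ to be strictly monotone.

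The calculations are routine; the delicate point is simply the bookkeeping of roles after the swap. In the pair $(B_{-\gamma},A_\gamma)$ it is $B_{-\gamma}$ that is the (maximally) cohypomonotone operator, whose Yosida regularization of index $\gamma$ is the original $B$, while $A_\gamma$ is the cocoercive operator, with $(A_\gamma)_{-\gamma}=A$. Consequently the uniform/strong monotonicity hypotheses of parts (viii)--(ix) of Theorem \ref{th1} fall on $A$ itself, not on a regularization, and one must avoid conflating $\operatorname{Zer}(A+B)$ with $\operatorname{Zer}(B_{-\gamma}+A_\gamma)$ when identifying the weak limits of $x(t)$ and of the auxiliary trajectory $y(t)$.
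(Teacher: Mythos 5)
Your proposal is correct and takes essentially the same route as the paper, whose entire proof is the reduction via Proposition \ref{pro1}: the system (\ref{meq2}) for $(A,B)$ coincides with the system (\ref{meq}) for $(B_{-\gamma},A_\gamma)$, that pair satisfies the same assumptions, and Theorem \ref{th1} applied to it translates back to the stated conclusions. You merely make explicit the verifications the paper leaves implicit (Lemma \ref{l2} for the swapped hypotheses, Proposition \ref{lm1}(c) for nonemptiness of the new zero set, symmetry of (\ref{eq3.8})), and your closing caveat correctly observes that the literal translation places the weak limit of $y(t)$ in $\operatorname{Zer}(B_{-\gamma}+A_\gamma)$, so the set $\operatorname{Zer}(A+B)$ appearing in item (v) of the theorem statement should be read accordingly.
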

\begin{proof}
	By Proposition \ref{pro1}, dynamical system (\ref{meq2}) applied to $(B_{-\gamma}, A_{\gamma})$ is system (\ref{meq}) applied to $(A, B)$. Since the pair $(A, B)$ satisfies the same assumption as $(B_{-\gamma}, A_{\gamma})$, so by using $(A, B)$ instead of $(B_{-\gamma}, A_{\gamma})$ in Theorem \ref{th1}, we have the desired result.
\end{proof}
\subsection{Function framework}
In this section, to study the optimization problem (\ref{eq1}) we consider the following first order dynamical system
\begin{eqnarray}\label{1}{
		\left\{
		\begin{array}{lc@{}c@{}r}
			\dot{u}(t)= \lambda(t)[(I-\gamma M^{-1}\nabla g) \operatorname{prox}_{\gamma f}^M ~u(t)-u(t)]\\[6pt]
			u(0)=u_0,
		\end{array}\right.
}\end{eqnarray}
where $u_0\in H$ and $\lambda:[0, \infty)\to [0, \infty)$ is Lebesgue measurable function.

To study the convergence behavior of trajectories generated by the dynamical system (\ref{1}), we need the following assumptions:
\begin{itemize}
	
	\item [(B1)] $f:H\to \mathbb{R}\cup\{\infty\}$ is proper, convex, and lower-semicontinuous function.
	\item [(B2)] $g:H\to \mathbb{R}$ is differentiable and its gradient $\nabla g$ is $\beta$-cocoercive function.
	\item [(B3)]$\operatorname{Argmin}(f+g)\ne\emptyset.$
\end{itemize}

\begin{remark}\label{re1}
	\normalfont{
		\item [(1)] Considering $\alpha=\gamma$, $A=\partial f$, $B=\nabla g$, assumptions (A1) and (A2) hold.
		Since $ A$ is maximal monotone, the operator $A_\gamma$ is defined everywhere, single-valued and $\gamma$-cocoercive \cite{bauschke2011convex}. Moreover, by the Moreau-Rockafellar theorem \cite{peypouquet2015convex}, one has $\partial(f+g)=\partial f+\nabla g$, so $\operatorname{Zer}(A+B)=\operatorname{Zer}(\partial f+ \nabla g)= \operatorname{Zer}(\partial(f+g))= \text{Argmin}(f+g)\ne \emptyset$.
		\item [(2)] If Moreau envelope of $f$ is denoted by $f_\gamma$, then $\nabla f_\gamma= (I-\operatorname{prox}_{\gamma f})/\gamma$ \cite{bauschke2011convex}. So, $\nabla f_\gamma= A_\gamma$.
		\item [(3)] The proximity operator of a proper, lower semicontinuous and convex function $f$ relative to the metric induced by strongly positive operator $M$ is (see \cite{combettes2014variable})
		\begin{align}
			\operatorname{prox}_{\gamma f}^M(u) = \underset{v\in H}{\operatorname{Argmin}}\left\lbrace f(v)+\frac{1}{2\gamma}\|v-u\|^2_M\right\rbrace. \nonumber
		\end{align}
		Note that $J_{\gamma \partial f}^M=\operatorname{prox}_{\gamma f}^M= (I+\gamma M^{-1}\partial f)^{-1}$.}
\end{remark}
\begin{theorem}\label{th2}
	Let assumptions (B1), (B2), (B3) and (A4) hold, $\lambda:[0,\infty) \to [0,\infty)$ be a Lebesgue measurable  function satisfying condition (\ref{2.6}), $u_0\in H$ and $u:[0,\infty)\to H$ be the unique strong global solution of dynamical system (\ref{1}). Let $\gamma\in (0,2\kappa)$, where $0<\kappa\le \|M\|\beta$ and $\|M\|\ge \frac{1}{2}$. Set $\delta:=\frac{2\kappa+\gamma}{2\gamma}$. Then the following statements hold:
	\begin{itemize}
		\item [(i)] The trajectory $u$ is bounded and $\dot{u}$, $(I-(I-\gamma M^{-1}\nabla g) \operatorname{prox}_{\gamma f}^M)u\in L^2([0,\infty);H)$.
		\item[(ii)] $\lim\limits_{t\to\infty}\dot{u}(t)=\lim\limits_{t\to\infty}(I-(I-\gamma M^{-1}\nabla g) \operatorname{prox}_{\gamma f}^M)(u(t))=0$.
		\item[(iii)] $u(t)\rightharpoonup u^*$.
		\item[(iv)] $\lim\limits_{t\to \infty}\nabla f_{\gamma}(u(t))=\nabla f_{\gamma}(u^*)$.
		\item[(v)] $v(t)\rightharpoonup v^*\in \operatorname{Argmin}(f+g)$, where $v(t)= \operatorname{prox}_{\gamma f}^M (u(t))$.
		\item[(vi)] $\lim\limits_{t\to \infty}\nabla g(v(t))= \nabla g(v^*)= -\nabla f_\gamma(u^*)$.
		\item [(vii)] $(f+g)(v(t))\to (f+g)(v^*)= \inf(f+g)$, $f(v(t))\to f(v^*)$ and $g(v(t))\to g(v^*)$.
	\end{itemize}
\end{theorem}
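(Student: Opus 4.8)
The strategy is to observe that \eqref{1} is exactly the backward--forward system \eqref{meq} for the pair $(A,B)=(\partial f,\nabla g)$ with $\alpha=\gamma$, so that (i)--(vi) become a direct transcription of Theorem~\ref{th1}, and only (vii) needs a fresh argument. By Remark~\ref{re1}, under (B1)--(B3) and (A4) the assumptions (A1)--(A4) are met, $J^M_{\gamma\partial f}=\operatorname{prox}^M_{\gamma f}$, $A_\gamma=\nabla f_\gamma$ is $\gamma$-cocoercive, and $\operatorname{Zer}(\partial f+\nabla g)=\operatorname{Argmin}(f+g)$; the hypotheses on $\gamma,\kappa,\|M\|$ are precisely those making the step-size condition of Theorem~\ref{th1} hold for $\alpha=\gamma$, so Theorem~\ref{th1} applies. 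Reading off its conclusions: (i) from Theorem~\ref{th1}(i); (ii) from \ref{th1}(ii); (iii) from \ref{th1}(iii); (iv) from \ref{th1}(iv) after identifying $A_\gamma=\nabla f_\gamma$ (the weak limit $u^*$ lies in $\operatorname{Zer}((\nabla g)_{-\gamma}+\nabla f_\gamma)$, so (iv) of \ref{th1} is applicable); (v) from \ref{th1}(vi), since $v(t)=J^M_{\gamma\partial f}(u(t))=\operatorname{prox}^M_{\gamma f}(u(t))$ and $\operatorname{Zer}(\partial f+\nabla g)=\operatorname{Argmin}(f+g)$; and (vi) from \ref{th1}(vii) with $B=\nabla g$, $A_\gamma=\nabla f_\gamma$.

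It remains to prove (vii). Since $f$ is proper, convex and lower semicontinuous and $g$ is convex and continuous, both are weakly lower semicontinuous, so the weak convergence $v(t)\rightharpoonup v^*$ from (v) gives
\begin{align*}
\liminf_{t\to\infty}f(v(t))\ge f(v^*),\qquad \liminf_{t\to\infty}g(v(t))\ge g(v^*).
\end{align*}
For the matching upper bounds, note that $v(t)=J^M_{\gamma\partial f}(u(t))$ means $u(t)\in v(t)+\gamma M^{-1}\partial f(v(t))$, i.e.\ $\tfrac1\gamma M(u(t)-v(t))\in\partial f(v(t))$, and by Proposition~\ref{l2.1}(a) and Remark~\ref{re1} this element equals $\nabla f_\gamma(u(t))$, so $\nabla f_\gamma(u(t))\in\partial f(v(t))$. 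Testing the subgradient inequality at $v^*$, and using convexity and differentiability of $g$ likewise, we get for every $t\in[0,\infty)$
\begin{align*}
f(v(t))\le f(v^*)+\big\langle \nabla f_\gamma(u(t)),\,v(t)-v^*\big\rangle,\qquad
g(v(t))\le g(v^*)+\big\langle \nabla g(v(t)),\,v(t)-v^*\big\rangle.
\end{align*}
By (iv) and (vi), $\nabla f_\gamma(u(t))$ and $\nabla g(v(t))$ converge strongly (to $\nabla f_\gamma(u^*)$ and $\nabla g(v^*)$), while $v(t)-v^*$ is bounded and tends weakly to $0$; hence both inner products tend to $0$, giving $\limsup_{t\to\infty}f(v(t))\le f(v^*)$ and $\limsup_{t\to\infty}g(v(t))\le g(v^*)$. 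Combining with the $\liminf$ estimates yields $f(v(t))\to f(v^*)$ and $g(v(t))\to g(v^*)$, hence $(f+g)(v(t))\to(f+g)(v^*)$, which equals $\inf(f+g)$ because $v^*\in\operatorname{Argmin}(f+g)$.

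The crux is this last step. The ``naive'' route---the variational inequality $f(v(t))+\tfrac1{2\gamma}\|v(t)-u(t)\|_M^2\le f(v^*)+\tfrac1{2\gamma}\|v^*-u(t)\|_M^2$ defining $\operatorname{prox}^M_{\gamma f}$---breaks down because $u(t)$ is only known to converge weakly, so $\|v^*-u(t)\|_M^2$ need not have a limit. The argument above routes around this by pairing the \emph{strongly} convergent gradient nets $\nabla f_\gamma(u(t))$ and $\nabla g(v(t))$ (whose convergence comes from (iv) and (vi), ultimately from the cocoercivity estimates in Theorem~\ref{th1}) against the weakly null net $v(t)-v^*$; everything else is bookkeeping from Theorem~\ref{th1}.
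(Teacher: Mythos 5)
Your proposal is correct and follows essentially the same route as the paper: parts (i)--(vi) are read off from Theorem \ref{th1} via Remark \ref{re1} with $A=\partial f$, $B=\nabla g$, and (vii) combines (weak) lower semicontinuity with subgradient inequalities at $v(t)$ paired against the bounded, weakly null net $v(t)-v^*$. The only cosmetic difference is that you treat $f$ and $g$ separately using the strongly convergent elements $\nabla f_\gamma(u(t))\in\partial f(v(t))$ and $\nabla g(v(t))$ (statements (iv) and (vi)), whereas the paper first handles $f+g$ via statement (ii), using $\tfrac{M}{\gamma}(u(t)-w(t))\in\partial(f+g)(v(t))$ with $w(t)=(I-\gamma M^{-1}\nabla g)v(t)$, and then recovers $g$ by subtraction --- the same estimates in a slightly different order.
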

\begin{proof}
	The statements (i) through (vi) follow from Theorem \ref{th1} and Remark \ref{re1}, by taking $A:=\partial f$ and $B:= \nabla g$.
	
	(vii) First, we show that $(f+g)(v(t))\to (f+g)(v^*)$. Since $(f+g)$ is lower semicontinuity and $v(t)\rightharpoonup v^*$, we have $(f+g)(v^*)\le \liminf(f+g)(v(t))$. Suppose that $w(t)=v(t)-\gamma M^{-1}\nabla g(v(t))$ and since $v(t)=\operatorname{prox}_{\gamma f}^M(u(t))$, we have
	\begin{align}\label{3.20}
		u(t)\in v(t)+\gamma M^{-1}\partial f(v(t))&= w(t)+\gamma M^{-1}\nabla g(v(t))+\gamma M^{-1}\partial f(v(t))\\[6pt]
		&= w(t)+\gamma M^{-1}\partial (f+g)(v(t)).\nonumber
	\end{align}
	So, from the definition of subgradient of $f+g$ at $v(t)$
	\begin{align*}
		(f+g)(v^*)\ge (f+g)(v(t))+\langle v^*-v(t), \frac{M}{\gamma}(u(t)-w(t))\rangle.  
	\end{align*}
	In view of (ii), $(f+g)(v^*)\ge \limsup (f+g)(v(t))$. So, we have $(f+g)(v(t))\to (f+g)(v^*)$.\\
	Secondly, we show that $f(v(t))\to f(v^*)$ as $t\to \infty$. From statement (v) and the fact that $f$ is lower semicontinuous, we obtain $f(v^*)\le \liminf f(v(t))$. Also, from (\ref{3.20}) and by the definition of subgradient of $f$ at point $v(t)$, we have
	\begin{align}
		f(v^*) &\ge f(v(t))+\langle v^*-v(t), \frac{M}{\gamma}(u(t)-w(t)-\nabla g(v(t)))\rangle \nonumber\\
		&\ge f(v(t))+\langle v^*-v(t), \frac{M}{\gamma}(u(t)-w(t)-(\nabla g(v(t))-\nabla g(v^*)))\rangle \nonumber\\
		&+\langle v^*-v(t), -\nabla g(v^*)\rangle\nonumber.
	\end{align}
	From statements (ii), (v) and (vi), we have $f(v^*)\ge \limsup f(v(t))$, which shows our desired result.
\end{proof}
Suppose $\gamma \in \mathbb{R}$. Define the function $l: H\to \mathbb{R}\cup\{\infty\}$, by $l_{\gamma}^*= \left(l^*+\frac{\gamma}{2}\|\cdot\|^2\right)$, where $l_\gamma^*$ is the Frenchel conjugate of a function $l_\gamma$ \cite{bauschke2011convex}. So, $l_\gamma= \left(l^*+\frac{\gamma}{2}\|\cdot\|^2\right)^*$. Since, for $\gamma >0$ and convex $l$, the function $\left(l^*+\frac{\gamma}{2}\|\cdot\|^2\right)^*$ is the Moreau envelope of $l$, so the notation of Frenchel conjugate is compatible with the notation for the Moreau envelope.
\begin{lemma}\label{l3.1}\cite{attouch2018backward}
	Let $\gamma\in (0, \beta]$ and $g:H\to \mathbb{R}$ be convex and differentiable. Let $\nabla g$ be $\beta$-cocoercive. Then the following statements hold:
	\begin{itemize}
		\item [(i)] For $\gamma\ge -\beta$, $\mu\in \mathbb{R}$, we have $(g_\gamma)_\mu=g_{\gamma+\mu}$. In particular, $(g_{-\gamma})_\gamma=g$.
		\item [(ii)] $g_{-\gamma}$ is convex, lower semicontinious, proper and $\operatorname{prox}_{\gamma g_{-\gamma}}^M= I-\gamma M^{-1}\nabla g$.
		\item [(iii)] For all $u\in H$, $g_{-\gamma}(u)=\sup\limits_{\eta\in H}\left\lbrace g(\eta)-\frac{1}{2\gamma}\|u-\eta\|^2\right\rbrace$.
	\end{itemize}
\end{lemma}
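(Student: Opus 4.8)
The plan is to treat all three parts through the Fenchel conjugate, exploiting the dictionary between properties of $\nabla g$ and of $g^*$. The enabling observation is that $\beta$-cocoercivity of $\nabla g$ forces (Cauchy--Schwarz) $\nabla g$ to be $\tfrac1\beta$-Lipschitz, hence $g^*$ to be $\beta$-strongly convex, i.e.\ $g^*-\tfrac\beta2\|\cdot\|^2$ is convex \cite{bauschke2011convex}. Since $\gamma\in(0,\beta]$, it follows that $g^*+\tfrac\rho2\|\cdot\|^2$ is proper, lower semicontinuous and convex for every $\rho\ge-\beta$, in particular for $\rho=\gamma$ and $\rho=-\gamma$; also $g$ is itself proper, lsc and convex (being real-valued and convex), so $g^{**}=g$. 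Securing these facts is the one place where $\gamma\le\beta$ is used, and it is the main technical point; the remainder is conjugate bookkeeping.

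For (i) I would start from the definition $g_\gamma=(g^*+\tfrac\gamma2\|\cdot\|^2)^*$: since $g^*+\tfrac\gamma2\|\cdot\|^2$ is proper lsc convex, the Fenchel--Moreau theorem yields $(g_\gamma)^*=g^*+\tfrac\gamma2\|\cdot\|^2$, and therefore
\[
(g_\gamma)_\mu=\big((g_\gamma)^*+\tfrac\mu2\|\cdot\|^2\big)^*=\big(g^*+\tfrac{\gamma+\mu}2\|\cdot\|^2\big)^*=g_{\gamma+\mu}.
\]
Replacing $(\gamma,\mu)$ by $(-\gamma,\gamma)$, which is legitimate because $-\gamma\ge-\beta$, gives $(g_{-\gamma})_\gamma=g_0=(g^*)^*=g$. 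For (ii), $g_{-\gamma}=(g^*-\tfrac\gamma2\|\cdot\|^2)^*$ is the conjugate of a proper lsc convex function, hence is proper, lsc and convex, so $\operatorname{prox}_{\gamma g_{-\gamma}}^M=(I+\gamma M^{-1}\partial g_{-\gamma})^{-1}$ is well defined (Remark \ref{re1}(3)). Using $(g_{-\gamma})^*=g^*-\tfrac\gamma2\|\cdot\|^2$, the identity $\partial(g_{-\gamma})^*=(\partial g_{-\gamma})^{-1}$, and the subdifferential sum rule (valid since $-\tfrac\gamma2\|\cdot\|^2$ is everywhere differentiable), I get $(\partial g_{-\gamma})^{-1}=\partial g^*-\gamma I=(\nabla g)^{-1}-\gamma I$, that is $\partial g_{-\gamma}=\big((\nabla g)^{-1}-\gamma I\big)^{-1}=(\nabla g)_{-\gamma}$. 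Then Proposition \ref{l2.1}(c) applied to $T=\nabla g$ gives $J_{\gamma(\nabla g)_{-\gamma}}^M=I-\gamma M^{-1}\nabla g$, whence $\operatorname{prox}_{\gamma g_{-\gamma}}^M=J_{\gamma\partial g_{-\gamma}}^M=I-\gamma M^{-1}\nabla g$.

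For (iii), fix $u\in H$ and write $g(\eta)=\sup_{v\in H}\{\langle\eta,v\rangle-g^*(v)\}$ using $g=g^{**}$; substituting and interchanging the two suprema (always valid),
\[
\sup_{\eta\in H}\Big\{g(\eta)-\tfrac1{2\gamma}\|u-\eta\|^2\Big\}=\sup_{v\in H}\Big\{-g^*(v)+\sup_{\eta\in H}\big(\langle\eta,v\rangle-\tfrac1{2\gamma}\|u-\eta\|^2\big)\Big\}.
\]
The inner supremum, computed by the translation $\zeta=\eta-u$ and the elementary conjugacy $\sup_\zeta\{\langle\zeta,v\rangle-\tfrac1{2\gamma}\|\zeta\|^2\}=\tfrac\gamma2\|v\|^2$, equals $\langle u,v\rangle+\tfrac\gamma2\|v\|^2$; hence the whole expression equals $\sup_v\{\langle u,v\rangle-(g^*(v)-\tfrac\gamma2\|v\|^2)\}=(g^*-\tfrac\gamma2\|\cdot\|^2)^*(u)=g_{-\gamma}(u)$, which is the claim.

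The hard part is really the preliminary step: one must be sure that $g^*\pm\tfrac\gamma2\|\cdot\|^2$ are proper lsc convex, so that the Fenchel--Moreau biconjugation and the formula $(g_{\pm\gamma})^*=g^*\pm\tfrac\gamma2\|\cdot\|^2$ are legitimate, and this is exactly where $\gamma\le\beta$ and the cocoercivity of $\nabla g$ enter. A secondary subtlety in (iii) is that the supremum defining $g_{-\gamma}$ is a supremum of functions that are \emph{concave} in $u$, so convexity of the result is not visible termwise; the interchange argument circumvents this by identifying the quantity with an honest Legendre conjugate. Once these points are in place, parts (i)--(iii) are routine conjugate calculus.
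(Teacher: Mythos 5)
Your proof is correct, and in fact it supplies something the paper itself does not: Lemma \ref{l3.1} is stated here only with the citation \cite{attouch2018backward} and no proof is given, so there is no internal argument to compare against. Your conjugacy route (cocoercivity $\Rightarrow$ $g^*-\tfrac{\beta}{2}\|\cdot\|^2$ convex, hence $g^*\pm\tfrac{\gamma}{2}\|\cdot\|^2$ proper lsc convex, then Fenchel--Moreau bookkeeping) is the natural one and is in the spirit of the cited source. Moreover, the cited identity is stated there for the unweighted proximal operator; your derivation of the variable-metric form $\operatorname{prox}_{\gamma g_{-\gamma}}^M=I-\gamma M^{-1}\nabla g$, obtained by first identifying $\partial g_{-\gamma}=(\nabla g)_{-\gamma}$ and then invoking Proposition \ref{l2.1}(c), is exactly the bridge needed to justify the statement as it is used in this paper, so this is a genuine added value.

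Two small justification points, both easily patched. First, in (ii) you invoke ``the subdifferential sum rule'' for $g^*-\tfrac{\gamma}{2}\|\cdot\|^2$, but the summand $-\tfrac{\gamma}{2}\|\cdot\|^2$ is concave, so the convex sum rule does not apply as stated; the clean argument is to apply Moreau--Rockafellar in the other direction, to the decomposition $g^*=\bigl(g^*-\tfrac{\gamma}{2}\|\cdot\|^2\bigr)+\tfrac{\gamma}{2}\|\cdot\|^2$ (both summands convex, the quadratic continuous everywhere), which yields $\partial g^*=\partial\bigl(g^*-\tfrac{\gamma}{2}\|\cdot\|^2\bigr)+\gamma I$ and hence the identity you want. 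Second, ``$g$ is real-valued and convex, hence lsc'' is not automatic in an infinite-dimensional $H$; here it follows because $\nabla g$ is $\tfrac{1}{\beta}$-Lipschitz, so the descent lemma bounds $g$ above on balls and a convex function bounded above near a point is continuous, giving $g^{**}=g$. With these two remarks inserted, the argument is complete.
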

\begin{lemma}\label{l3.2}
	Let $\gamma\in (0, \beta]$, and assumptions (B1), (B2), (B3) and (A4) hold. Then
	\begin{enumerate}
		\item [(i)] $I-M^{-1}\gamma \nabla g: \operatorname{Argmin}(f+g)\to \operatorname{Argmin}(f_{\gamma}+g_{-\gamma})$ is a bijection with inverse $\operatorname{prox}_{\gamma f}^M$.
		\item [(ii)] $\inf(f+g)= \inf(g_{-\gamma}+f_{\gamma})$.
	\end{enumerate}
\end{lemma}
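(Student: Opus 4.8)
The plan is to obtain both statements by specializing results already in hand to the choice $A=\partial f$, $B=\nabla g$ with $\alpha=\gamma$, exactly as in Remark~\ref{re1}, and, for part~(ii), to recast everything through Fenchel conjugates using the convention recorded just before the statement. For part~(i), since $g$ is finite and continuous on $H$ the Moreau--Rockafellar sum rule gives $\partial(f+g)=\partial f+\nabla g$, so by Fermat's rule $\operatorname{Zer}(\partial f+\nabla g)=\operatorname{Argmin}(f+g)$, which is nonempty by (B3). I would then identify the two Yosida approximations appearing in Proposition~\ref{lm1}(c): one has $(\partial f)_\gamma=\nabla f_\gamma$, which is single-valued, everywhere defined and $\gamma$-cocoercive by Remark~\ref{re1}, and $(\nabla g)_{-\gamma}=\partial g_{-\gamma}$, the latter obtained by comparing $J_{\gamma(\nabla g)_{-\gamma}}^M=I-\gamma M^{-1}\nabla g$ (Proposition~\ref{l2.1}(c)) with $\operatorname{prox}_{\gamma g_{-\gamma}}^M=J_{\gamma\partial g_{-\gamma}}^M=I-\gamma M^{-1}\nabla g$ (Lemma~\ref{l3.1}(ii)), and then using that $S\mapsto J_{\gamma S}^M=(I+\gamma M^{-1}S)^{-1}$ is injective for $\gamma\neq0$ and $M$ strongly positive. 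Since $f_\gamma$ is finite and continuous on $H$, the sum rule again yields $\operatorname{Zer}(\partial g_{-\gamma}+\nabla f_\gamma)=\operatorname{Argmin}(f_\gamma+g_{-\gamma})$. With $J_{\gamma A}^M=\operatorname{prox}_{\gamma f}^M$ (Remark~\ref{re1}), Proposition~\ref{lm1}(c) applied to $(\partial f,\nabla g)$ --- whose hypotheses hold because $\operatorname{prox}_{\gamma f}^M$ is single-valued and everywhere defined --- is precisely the asserted bijection with inverse $\operatorname{prox}_{\gamma f}^M$.

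For part~(ii) I would pass to conjugates and evaluate at $0$. Put $\mu:=\inf(f+g)\in\mathbb{R}$ (finite by (B3)), so $(f+g)^*(0)=-\mu$; since $g$ is finite and continuous on $H$ the conjugate-of-a-sum formula $(f+g)^*(y)=\inf_{w\in H}\{f^*(w)+g^*(y-w)\}$ holds with exact infimum, hence $-\mu=\inf_{w\in H}\{f^*(w)+g^*(-w)\}$. On the other side, by the stated convention $f_\gamma=(f^*+\tfrac{\gamma}{2}\|\cdot\|^2)^*$ and $g_{-\gamma}=(g^*-\tfrac{\gamma}{2}\|\cdot\|^2)^*$. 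The function $f^*+\tfrac{\gamma}{2}\|\cdot\|^2$ is proper, lsc and convex, hence its own biconjugate, so $f_\gamma^*=f^*+\tfrac{\gamma}{2}\|\cdot\|^2$. The crucial observation is that $g^*-\tfrac{\gamma}{2}\|\cdot\|^2$ is \emph{also} proper, lsc and convex: $\nabla g$ being $\beta$-cocoercive is $\tfrac{1}{\beta}$-Lipschitz, so $g^*$ is $\beta$-strongly convex, and subtracting $\tfrac{\gamma}{2}\|\cdot\|^2$ with $\gamma\le\beta$ leaves a $(\beta-\gamma)$-strongly convex (in particular convex) proper lsc function; hence $g_{-\gamma}^*=g^*-\tfrac{\gamma}{2}\|\cdot\|^2$. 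Since $f_\gamma$ is finite and continuous on $H$ while $g_{-\gamma}$ is proper lsc convex by Lemma~\ref{l3.1}(ii), the conjugate-of-a-sum formula applies again; evaluating at $0$ the two quadratic terms cancel, so $(f_\gamma+g_{-\gamma})^*(0)=\inf_{w\in H}\{f^*(w)+g^*(-w)\}=-\mu$, and therefore $\inf(f_\gamma+g_{-\gamma})=-(f_\gamma+g_{-\gamma})^*(0)=\mu=\inf(f+g)$.

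I expect part~(i) to be routine bookkeeping around Proposition~\ref{lm1}(c); the genuine content is the conjugate computation in part~(ii). The step on which everything hinges is the identity $g_{-\gamma}^*=g^*-\tfrac{\gamma}{2}\|\cdot\|^2$, and this is where the role of the standing hypothesis $\gamma\in(0,\beta]$ is most transparent: for $\gamma>\beta$ the function $g^*-\tfrac{\gamma}{2}\|\cdot\|^2$ need not be convex, its biconjugate would only be its closed convex hull, and the cancellation of the quadratic terms would fail. A secondary point to handle carefully is that the conjugate-of-a-sum formula be invoked with a legitimate qualification condition; this is ensured in both uses because one summand ($g$, respectively $f_\gamma$) is finite and continuous on all of $H$.
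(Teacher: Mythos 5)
Your argument is correct, and it is worth noting how it relates to the paper's own proof, which is essentially a citation: the paper disposes of both parts by invoking Proposition 4.6 of Attouch--Peypouquet--Redont \cite{attouch2018backward} together with Remark \ref{re1}, Lemma \ref{l3.1} and Proposition \ref{lm1}, without writing out any computation. Your part (i) is in substance the route the paper intends: specialize Proposition \ref{lm1}(c) to $(\partial f,\nabla g)$ and translate $\operatorname{Zer}$ into $\operatorname{Argmin}$ via Moreau--Rockafellar and Fermat; your extra care in identifying $(\nabla g)_{-\gamma}=\partial g_{-\gamma}$ by comparing $J^M_{\gamma(\nabla g)_{-\gamma}}=I-\gamma M^{-1}\nabla g$ (Proposition \ref{l2.1}(c)) with $\operatorname{prox}^M_{\gamma g_{-\gamma}}$ (Lemma \ref{l3.1}(ii)) and using that the resolvent, as a relation, determines the operator is a legitimate and welcome piece of bookkeeping that the paper leaves implicit. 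Part (ii) is where you genuinely diverge: instead of quoting the external result (which is permissible here, since the identity $\inf(f+g)=\inf(f_\gamma+g_{-\gamma})$ does not involve $M$ at all), you prove it directly by Fenchel conjugation, using $f_\gamma^*=f^*+\tfrac{\gamma}{2}\|\cdot\|^2$ and $g_{-\gamma}^*=g^*-\tfrac{\gamma}{2}\|\cdot\|^2$ and evaluating the sum-conjugate at $0$ so that the quadratic terms cancel. The key step $g_{-\gamma}^*=g^*-\tfrac{\gamma}{2}\|\cdot\|^2$ is justified exactly as you say: $\beta$-cocoercivity of $\nabla g$ gives $1/\beta$-Lipschitz continuity, hence $\beta$-strong convexity of $g^*$, so $g^*-\tfrac{\gamma}{2}\|\cdot\|^2$ is proper, lsc and convex for $\gamma\le\beta$ and equals its biconjugate; the qualification conditions you invoke for the two sum-conjugate formulas (one summand finite and continuous on all of $H$) are satisfied. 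What your route buys is a self-contained proof within the paper's own conjugate-based convention for $l_\gamma$, and it makes visible precisely where the hypothesis $\gamma\in(0,\beta]$ enters, which the citation-only proof obscures; what the paper's route buys is brevity and reuse of the unit-metric result already established in \cite{attouch2018backward}.
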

\begin{proof}
	Proof follows from [\cite{attouch2018backward}, Proposition 4.6], Remark \ref{re1}, Lemma \ref{l3.1} and Lemma \ref{lm1}.
\end{proof}
In Theorem \ref{th2}, we have discussed the asymptotic convergence of the trajectories of (\ref{1}) under the condition $\gamma\in (0,2\kappa);~ \kappa>0$. In convex optimization, the interesting observation is to think  about the choice of step sizes. Now, we choose $\gamma\in(0,\beta]$ and discuss the dynamical system (\ref{1}) of the optimization problem.
\begin{theorem}
	Let assumptions (B1), (B2), (B3) and (A4) hold. Let $\gamma\in (0,\beta]$, $\lambda:[0,\infty)\to [0,\infty)$ be a Lebesgue measurable function satisfying condition (\ref{2.6}), $u_0\in H$ and $u:[0,\infty)\to H$ be the unique strong global solution of dynamical system (\ref{1}). Then the statements of Theorem \ref{th2} and the following statements are true:
	\begin{itemize}
		\item[(i)] $u(t)$ converges weakly to a minimizer of $g_{-\gamma}+f_\gamma$ as $t\to \infty$.
		\item[(ii)] If $u^*$ is a minimizer of $g_{-\gamma}+f_\gamma$, then $\nabla f_\gamma(u(\cdot))-\nabla f_{\gamma}(u^*)\in L^2([0,\infty);H)$, $\lim\limits_{t\to \infty}\nabla f_{\gamma}(u(t))=\nabla f_{\gamma}(u^*)$ and $\nabla f_\gamma(u(t))$ is constant on $g_{-\gamma}+f_{\gamma}$.
		\item[(iii)] If $\partial g_{-\gamma}$ or $\nabla f_{\gamma}$ is uniformly convex, then $u(t)$ converges strongly to a minimizer of $g_{-\gamma}+f_{\gamma}$ as $t\to\infty$.
		\item[(iv)]  If $\partial g_{-\gamma}$ is $\rho$-strongly convex for $\rho>0$, and choose $\eta>0$ fulfilling the condition:
		\begin{align*}
			\frac{1}{2\alpha}+\frac{\eta\|M\|^2}{2\gamma^2}\le \rho+\frac{\|M\|\overline{\lambda}}{\gamma}.
		\end{align*}
		Let $u^*$ be an equilibrium point of dynamical system (\ref{1}). Then we have the following:
		\begin{itemize}
			\item [(a)] If $\frac{1}{\eta\rho}<4$, then $u^*$ is globally exponentially stable.
			\item [(b)] If $\frac{1}{\eta\rho}=4$, then $u^*$ is global monotone attractor.
		\end{itemize}
	\end{itemize}
\end{theorem}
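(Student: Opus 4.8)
The plan is to specialize Theorem~\ref{th1} to the pair $(A,B)=(\partial f,\nabla g)$ with $\alpha=\gamma$, exactly as in the proof of Theorem~\ref{th2}, and then transcribe each conclusion of Theorem~\ref{th1} into the function language via Remark~\ref{re1}, Lemma~\ref{l3.1} and Lemma~\ref{l3.2}. First I would verify the hypotheses of Theorem~\ref{th1}: by Rockafellar's theorem $\partial f$ is maximal monotone, hence maximally $(\gamma-\gamma)$-cohypomonotone by Example~\ref{re}(i), so (A1) holds with $\alpha=\gamma$; (A2) is (B2); $\operatorname{Zer}(\partial f+\nabla g)=\operatorname{Argmin}(f+g)\neq\emptyset$ by Remark~\ref{re1}(1) together with (B3), so (A3) holds; (A4) is assumed. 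Since $\gamma\in(0,\beta]$ one has $\min\{\alpha,\beta\}=\min\{\gamma,\beta\}=\gamma$, so a $\kappa$ with $\gamma/2<\kappa\le\|M\|\gamma$ is admissible (the same step-size bookkeeping as in Theorem~\ref{th2}), and $\delta=\tfrac{2\kappa+\gamma}{2\gamma}$ is the averaging constant of $T=(I-\gamma M^{-1}\nabla g)\operatorname{prox}_{\gamma f}^{M}$. Hence Theorem~\ref{th1} (equivalently Theorem~\ref{th2}) applies, which already yields the statements of Theorem~\ref{th2}.

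Next I would set up the dictionary between the operator and function frameworks. By Remark~\ref{re1}(2), $A_\gamma=\nabla f_\gamma$. For the backward object $B_{-\gamma}$, Lemma~\ref{l3.1}(ii) gives $\operatorname{prox}_{\gamma g_{-\gamma}}^{M}=I-\gamma M^{-1}\nabla g$ (this is exactly where $\gamma\le\beta$ is used, so that $g_{-\gamma}$ is proper, convex and lower semicontinuous), while Proposition~\ref{l2.1}(c) applied to $T=\nabla g$ gives $J_{\gamma((\nabla g)_{-\gamma})}^{M}=I-\gamma M^{-1}\nabla g$; since a resolvent determines its operator, $B_{-\gamma}=(\nabla g)_{-\gamma}=\partial g_{-\gamma}$. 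Because $g_{-\gamma}$ is proper convex lower semicontinuous and $f_\gamma$ is convex and everywhere differentiable with $\nabla f_\gamma=A_\gamma$, the Moreau--Rockafellar sum rule gives $\operatorname{Zer}(B_{-\gamma}+A_\gamma)=\operatorname{Zer}(\partial g_{-\gamma}+\nabla f_\gamma)=\operatorname{Zer}(\partial(g_{-\gamma}+f_\gamma))=\operatorname{Argmin}(g_{-\gamma}+f_\gamma)$, and this set is nonempty by Lemma~\ref{l3.2}(i) (it is the image of the nonempty set $\operatorname{Argmin}(f+g)$ under the bijection $I-\gamma M^{-1}\nabla g$), with $\inf(f+g)=\inf(g_{-\gamma}+f_\gamma)$ by Lemma~\ref{l3.2}(ii).

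With this dictionary the four assertions become direct transcriptions. Statement~(i) is Theorem~\ref{th1}(iii): $u(t)\rightharpoonup u^*\in\operatorname{Zer}(B_{-\gamma}+A_\gamma)=\operatorname{Argmin}(g_{-\gamma}+f_\gamma)$. Statement~(ii) combines Theorem~\ref{th1}(iv), read through $A_\gamma=\nabla f_\gamma$, with Theorem~\ref{th1}(v): since $M$ is invertible, $M^{-1}A_\gamma$ being constant on $\operatorname{Zer}(B_{-\gamma}+A_\gamma)$ forces $\nabla f_\gamma$ itself to be constant on $\operatorname{Argmin}(g_{-\gamma}+f_\gamma)$. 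Statement~(iii) is Theorem~\ref{th1}(viii), once one recalls the standard equivalence that for a proper convex lower semicontinuous function the subdifferential is uniformly monotone exactly when the function is uniformly convex (with the modulus changing by a fixed factor), so ``$\partial g_{-\gamma}$ or $\nabla f_\gamma$ uniformly monotone'' is ``$g_{-\gamma}$ or $f_\gamma$ uniformly convex''. Statement~(iv) is Theorem~\ref{th1}(ix) verbatim: $\partial g_{-\gamma}$ is $\rho$-strongly monotone iff $g_{-\gamma}$ is $\rho$-strongly convex, the condition on $\eta$ is literally~(\ref{9}), and the two regimes $\tfrac{1}{\eta\rho}<4$ and $\tfrac{1}{\eta\rho}=4$ deliver global exponential stability and the global monotone attractor property respectively.

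The only steps that are not purely mechanical are the identification $B_{-\gamma}=\partial g_{-\gamma}$ --- which is precisely where the restriction $\gamma\in(0,\beta]$ enters, through Lemma~\ref{l3.1}(ii) --- and the nonemptiness and explicit description of $\operatorname{Argmin}(g_{-\gamma}+f_\gamma)$ as $\operatorname{Zer}(B_{-\gamma}+A_\gamma)$, which is handled by Lemma~\ref{l3.2}; both are off-the-shelf from \cite{attouch2018backward}. I expect the main (mild) obstacle to be keeping the convex-analytic dictionary consistent, in particular checking that uniform and strong monotonicity of the subdifferentials of $g_{-\gamma}$ and $f_\gamma$ correspond to uniform and strong convexity of the functions themselves; beyond that, the theorem introduces no new analytic content over Theorem~\ref{th1} and the proof is essentially a translation.
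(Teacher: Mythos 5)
Your proposal is correct and follows exactly the route the paper takes: the paper's own proof is the one-line remark that the result ``follows by applying Lemma \ref{l3.1} and Lemma \ref{l3.2} to Theorem \ref{th1},'' and your write-up simply makes explicit the dictionary ($A_\gamma=\nabla f_\gamma$, $B_{-\gamma}=\partial g_{-\gamma}$ via $\operatorname{prox}_{\gamma g_{-\gamma}}^{M}=I-\gamma M^{-1}\nabla g$, and $\operatorname{Zer}(B_{-\gamma}+A_\gamma)=\operatorname{Argmin}(g_{-\gamma}+f_\gamma)$) that the paper leaves implicit. Your version is in fact more careful than the paper's, e.g.\ in flagging where $\gamma\le\beta$ enters and in the step-size bookkeeping for $\kappa$.
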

\begin{proof}
	It follows by applying Lemma \ref{l3.1} and Lemma \ref{l3.2} to Theorem \ref{th1}.
\end{proof}
Now, we discuss the convergence of the trajectories of (\ref{1}) without the restriction on the choice of the step size $(\gamma\in (0,2\kappa);~\kappa>0)$.
\begin{theorem}\label{Thm4}
	Let assumptions (B1), (B2), (B3) and (A4) hold. Let $\lambda:[0,\infty)\to [0,\infty)$ be a Lebesgue measurable function satisfying condition (\ref{2.6}), $u_0\in H$ and $u:[0,\infty)\to H$ be the unique strong global solution of (\ref{1}). Then we have the following:
	\begin{itemize}
		\item [(i)] The trajectory $u$ is bounded and $\dot{u}$, $(I-(I-\gamma M^{-1}\nabla g) \operatorname{prox}_{\gamma f}^M)u\in L^2([0,\infty);H)$.
		\item[(ii)] $\lim\limits_{t\to\infty}\dot{u}(t)=\lim\limits_{t\to\infty}(I-(I-\gamma M^{-1}\nabla g) \operatorname{prox}_{\gamma f}^M)(u(t))=0$.
		\item[(iii)] $u(t)\rightharpoonup u^*\in\operatorname{Zer}((\nabla g)_{-\gamma}+\nabla f_\gamma)$ as $t\to \infty$.
		\item[(iv)] If $u^*\in \operatorname{Zer}((\nabla g)_{-\gamma}+\nabla f_\gamma)$, then $\nabla f_\gamma(u(\cdot))-\nabla f_{\gamma}(u^*)\in L^2([0,\infty);H)$.
		\item[(v)] If $(\nabla g)_{-\gamma}$ or $\nabla f_{\gamma}$ is uniformly convex, then $u(t)\to u^*\in \operatorname{Zer}((\nabla g)_{-\gamma}+\nabla f_\gamma)$ as $t\to\infty$.
	\end{itemize}
\end{theorem}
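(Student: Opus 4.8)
The plan is to recognise the dynamical system (\ref{1}) as the forward--backward system (\ref{meq2}) driven by the \emph{changed} pair of operators $\bigl((\nabla g)_{-\gamma},\nabla f_{\gamma}\bigr)$, and then to invoke Theorem \ref{th3}. Put $A:=\partial f$ and $B:=\nabla g$. By Remark \ref{re1}(2)--(3) and Proposition \ref{l2.1}(a), $\operatorname{prox}_{\gamma f}^{M}=J_{\gamma\partial f}^{M}=I-\gamma M^{-1}(\partial f)_{\gamma}=I-\gamma M^{-1}\nabla f_{\gamma}$, while Proposition \ref{l2.1}(c) applied to $T=\nabla g$ gives $J_{\gamma(\nabla g)_{-\gamma}}^{M}=I-\gamma M^{-1}\nabla g$. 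Hence the operator driving (\ref{1}) factors as
\begin{align*}
	\bigl(I-\gamma M^{-1}\nabla g\bigr)\circ\operatorname{prox}_{\gamma f}^{M}=J_{\gamma(\nabla g)_{-\gamma}}^{M}\circ\bigl(I-\gamma M^{-1}\nabla f_{\gamma}\bigr),
\end{align*}
which is exactly the operator driving (\ref{meq2}) for the pair $\bigl((\nabla g)_{-\gamma},\nabla f_{\gamma}\bigr)$; equivalently, this is Proposition \ref{pro1}(i) with $(A,B)=(\partial f,\nabla g)$. So the unique strong global solution $u$ of (\ref{1}) coincides with the unique strong global solution of (\ref{meq2}) associated with $\bigl((\nabla g)_{-\gamma},\nabla f_{\gamma}\bigr)$.

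Next I would verify that this pair satisfies (A1)--(A4). Assumption (A4) is a hypothesis. For the forward operator $\nabla f_{\gamma}=(\partial f)_{\gamma}$: $\partial f$ is maximally monotone, hence maximally $0$-cohypomonotone (Example \ref{re}(i)), i.e.\ maximally $(\gamma-\gamma)$-cohypomonotone, so Lemma \ref{l2} yields that $(\partial f)_{\gamma}$ is everywhere defined, single-valued and $\gamma$-cocoercive; this is (A2), and crucially it uses no upper bound on $\gamma$. For the set-valued operator $(\nabla g)_{-\gamma}$: the Yosida composition identity $(T_{-\gamma})_{\gamma}=T$ gives $\bigl((\nabla g)_{-\gamma}\bigr)_{\gamma}=\nabla g$, which by (B2) is everywhere defined, single-valued and $\beta$-cocoercive; reading Lemma \ref{l2} in the other direction shows $(\nabla g)_{-\gamma}$ is maximally $(\gamma-\beta)$-cohypomonotone, which is (A1). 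For (A3): by (B3) and the Moreau--Rockafellar theorem (Remark \ref{re1}(1)), $\operatorname{Zer}(\partial f+\nabla g)=\operatorname{Argmin}(f+g)\neq\emptyset$, and Proposition \ref{lm1}(c) shows $I-\gamma M^{-1}\nabla g$ maps this set bijectively onto $\operatorname{Zer}\bigl((\nabla g)_{-\gamma}+\nabla f_{\gamma}\bigr)$, which is therefore nonempty. Since the cohypomonotonicity parameter of $(\nabla g)_{-\gamma}$ is $\beta$ and the cocoercivity parameter of $\nabla f_{\gamma}$ is $\gamma$, condition (\ref{eq3.8}) for this pair reads $\kappa\le\|M\|\min\{\beta,\gamma\}$; choosing such a $\kappa$ with in addition $\gamma<2\kappa$ and setting $\delta:=\frac{2\kappa+\gamma}{2\gamma}$ (so that, via Propositions \ref{l1}, \ref{l2.1} and Lemmas \ref{l4}, \ref{l3}, the driving operator is $\tfrac{1}{\delta}$-averaged), Theorem \ref{th3} applies to (\ref{meq2}) for $\bigl((\nabla g)_{-\gamma},\nabla f_{\gamma}\bigr)$.

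Finally I would read off the statements. With ``$A$'' $=(\nabla g)_{-\gamma}$ and ``$B$'' $=\nabla f_{\gamma}$ one has $J_{\gamma A}^{M}\circ(I-\gamma M^{-1}B)=(I-\gamma M^{-1}\nabla g)\circ\operatorname{prox}_{\gamma f}^{M}$ and $\operatorname{Zer}(A+B)=\operatorname{Zer}\bigl((\nabla g)_{-\gamma}+\nabla f_{\gamma}\bigr)$, so Theorem \ref{th3}(i)--(ii) give (i)--(ii), Theorem \ref{th3}(iii) gives (iii), and Theorem \ref{th3}(iv)---whose ``$B$'' here is $\nabla f_{\gamma}$---gives (iv). For (v), Theorem \ref{th3}(vii) yields strong convergence to the unique zero once one notes that uniform convexity of $f_{\gamma}$ (resp.\ $g_{-\gamma}$) is equivalent to uniform monotonicity of $\nabla f_{\gamma}$ (resp.\ of $(\nabla g)_{-\gamma}$). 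The step I expect to be the crux is the verification of (A1) for the \emph{cohypomonotone} operator $(\nabla g)_{-\gamma}$ via Lemma \ref{l2} and the identity $(T_{-\gamma})_{\gamma}=T$, together with the bookkeeping needed to exhibit an admissible $\kappa$ for the prescribed $\gamma$: this is precisely where the automatic $\gamma$-cocoercivity of $\nabla f_{\gamma}$ removes the dependence on the earlier smallness restriction $\gamma\in(0,2\kappa)$ imposed on the original operators $(\partial f,\nabla g)$. Everything else is a transcription of Theorem \ref{th3}.
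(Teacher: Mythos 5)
Your reduction of (\ref{1}) to the forward--backward system (\ref{meq2}) for the pair $\bigl((\nabla g)_{-\gamma},\nabla f_{\gamma}\bigr)$ is the same correspondence as Proposition \ref{pro1}, and the verification of (A1)--(A3) for that pair is fine; the gap is the step where you ``choose such a $\kappa$ with in addition $\gamma<2\kappa$.'' For the relabelled pair, condition (\ref{eq3.8}) reads $\kappa\le\|M\|\min\{\beta,\gamma\}$, so an admissible $\kappa$ exists only when $\gamma<2\|M\|\min\{\beta,\gamma\}$, i.e.\ only when $\|M\|>\tfrac12$ (if $\gamma\le\beta$) or $\gamma<2\|M\|\beta$ (if $\gamma>\beta$). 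Neither is assumed in Theorem \ref{Thm4}, whose entire content is that the step-size restriction is removed. The obstruction is not bookkeeping: with $H=\mathbb{R}$, $M=I$, $g=\tfrac12|\cdot|^2$ (so $\beta=1$) and $\gamma=3$, the factor $I-\gamma M^{-1}\nabla g=J^{M}_{\gamma(\nabla g)_{-\gamma}}=-2I$ is expansive, so the driving operator of (\ref{1}) is not averaged and Theorem \ref{th3} is simply inapplicable, no matter how $\kappa$ is chosen. The $\gamma$-cocoercivity of $\nabla f_{\gamma}$ only takes care of the proximal factor (which is harmless for every $\gamma>0$ anyway); the $\gamma<2\kappa$ restriction in Theorems \ref{th1}/\ref{th3} comes from the other factor involving $\nabla g$, and your relabelling does not make it disappear. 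As written, your argument proves at most a variant of Theorem \ref{th2}, not Theorem \ref{Thm4}.

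The paper avoids this by not using averagedness at all: it differentiates the Lyapunov functional $\tfrac{1}{\gamma}h+q$, where $h(t)=\tfrac12\|u(t)-u^*\|_M^2$ and $q(t)=f_\gamma(u(t))-f_\gamma(u^*)-\langle\nabla f_\gamma(u^*),u(t)-u^*\rangle\ge0$, combining the inclusion $-M\tfrac{\dot u(t)}{\gamma\lambda(t)}-\nabla f_\gamma(u(t))\in(\nabla g)_{-\gamma}\bigl(\tfrac{\dot u(t)}{\lambda(t)}+u(t)\bigr)$ with the monotonicity of $(\nabla g)_{-\gamma}$ and the $\gamma$-cocoercivity of $\nabla f_\gamma$; it is this function-value correction term $q$ that replaces the averagedness/step-size condition, and weak convergence in (iii) is then obtained by a bespoke Opial-type argument (via the auxiliary quantity $\Phi(t,\cdot)$) rather than by citing Lemma \ref{2l1} through Theorem \ref{th3}, with (v) obtained from Theorem \ref{th1}(vii). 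To repair your proposal you would either have to add the hypotheses $\|M\|\ge\tfrac12$ and $\gamma<2\|M\|\beta$ (landing back in the regime of Theorem \ref{th2}), or carry out a Lyapunov argument of the paper's type that exploits the convexity of the Moreau envelope instead of averagedness.
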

\begin{proof}
	Let $u^* \in \operatorname{Zer}((\nabla g)_{-\gamma}+\nabla f_\gamma)$. From the definition of proximal operator and Proposition \ref{l2.1}, we have for every $t\in [0,\infty)$
	\begin{align}
		-M\frac{\dot{u}(t)}{\gamma\lambda(t)}- \nabla f_\gamma= (\nabla g)_{-\gamma}\left( \frac{\dot{u}(t)}{\lambda(t)} + u(t)\right). \label{1.23}
	\end{align}Combining (\ref{1.23}) with $-\nabla f_\gamma (u^*)\in (\nabla g)_{-\gamma}(u^*)$ and using the maximal monotonicity of $(\nabla g)_{-\gamma}$, we have for every $t\in [0,\infty)$
	\begin{align*}
		\left\langle \frac{\dot{u}(t)}{\lambda(t)}+u(t)-u^*, \nabla f_{\gamma}(u^*)- M\frac{\dot{u}(t)}{\gamma\lambda(t)}- \nabla f_{\gamma}(u(t))\right\rangle\ge 0.
	\end{align*}
	Since $\nabla f_{\gamma}$ is $\alpha$-cocoercive, so for every $t\in [0,\infty)$
	\begin{align*}
		\alpha\|\nabla f_\gamma (u(t))- \nabla f_\gamma (u^*)\|&\le \langle u(t)-u^*, \nabla f_{\gamma}(u(t))- \nabla f_{\gamma}(u^*)\rangle\\[6pt]
		&\le \left\langle  \frac{\dot{u}(t)}{\lambda(t)}, \nabla f_{\gamma}- M\frac{\dot{u}(t)}{\gamma\lambda(t)}-\nabla f_{\gamma}(u(t))\right \rangle\\[6pt]
		&+ \left\langle  u(t)-u^*,-M\frac{\dot{u}(t)}{\gamma\lambda(t)} \right\rangle\\[6pt]
		&\le \frac{1}{\lambda(t)}\langle \dot{u}(t), \nabla f_\gamma (u^*)-\nabla f_\gamma(u(t))\rangle- \frac{1}{\gamma\lambda^2(t)}\|\dot{u}(t)\|_M^2\\[6pt]
		&+\frac{1}{\gamma\lambda(t)}\langle u(t)-u^*, -M\dot{u}(t)\rangle.
	\end{align*}
	Define the map $q:[0,\infty)\to \mathbb{R}$, $q(t)=f_\gamma(u(t))-f_\gamma(u^*)-\langle \nabla f_\gamma(u^*),u(t)-u^*\rangle$.\\
	Since $\nabla f_\gamma$ is convex function, so we have $$q(t)\ge 0 ~\forall t\ge 0.$$
	Also, for any $t\in [0,\infty)$
	$$ \dot{q}(t)=\langle \dot{u}(t), \nabla f_\gamma(u(t))-\nabla f_\gamma(u^*)\rangle.$$
	Define the function $h: [0,\infty)\to \mathbb{R}, h(t)= \frac{1}{2}\|u(t)-u^*\|^2_M$ and using the fact that $\dot{h}(t)=\langle u(t)-u^*, M\dot{u}(t)\rangle $, we obtain
	\begin{align}\label{1.1}
		\alpha\lambda(t)\|\nabla f_\gamma(u(t))-\nabla f_\gamma(u^*)\|\le -\frac{d}{dt}(q(t))-\frac{1}{\gamma\lambda(t)}\|\dot{u}(t)\|_M^2-\frac{1}{\gamma}\dot{h}(t),
	\end{align}
	which implies that
	\begin{align*}
		\alpha\lambda(t)\|\nabla f_\gamma(u(t))-\nabla f_\gamma(u^*)\| +\frac{d}{dt}\left(\frac{1}{\gamma}h+q\right)+\frac{1}{\gamma\lambda(t)}\|\dot{u}(t)\|_M^2\le 0.
	\end{align*}
	So the function $t\mapsto \frac{1}{\gamma}h+q$ is monotonically decreasing. Keeping in mind the proof of Proposition \ref{prop2}, and the fact that $\lambda$ has positive upper and lower bounds, we obtain that $\frac{1}{\gamma}h+q, h,q,u$ are bounded and $\dot{u}, (I-(I-\gamma M^{-1}\nabla g) \operatorname{prox}_{\gamma f}^M)u\in L^2([0,\infty);H)$. Also, $\lim\limits_{t\to \infty}\dot{u}(t)=0$. It follows from (\ref{1.1}) that $$\nabla f_\gamma(u(t))-\nabla f_\gamma(u^*)\in L^2([0,\infty);H).$$ From Lemma \ref{lm2.2}, we conclude $$\lim\limits_{t\to \infty}\nabla f_{\gamma}(u(t))= \nabla f_\gamma(u^*).$$ Therefore, statements (i), (ii) and (iv) are proved.
	
	(iii) First we show that every weak sequential cluster point of $u(\cdot)$ is in $\operatorname{Zer}((\nabla g)_{-\gamma}+\nabla f_\gamma)$. Let $u^*\in \operatorname{Zer}((\nabla g)_{-\gamma}+\nabla f_\gamma)$ and $t_n \to \infty$ (as $n\to \infty$) be such that $\{u(t_n)\}\rightharpoonup\bar{u}$. Since $(u(t_n), \nabla f_\gamma (u(t_n))\in \operatorname{gra}(\nabla f_\gamma)$, $\lim\limits_{n\to \infty}\nabla f_{\gamma}(u(t_n))= \nabla f_\gamma(u^*)$ and $\operatorname{gra}(\nabla f_\gamma)$ is sequentially closed in the weak-strong topology, we get $\nabla f_\gamma (\bar{u})= \nabla f_\gamma (u^*)$.
	
	Using the fact that $\operatorname{gra}(\nabla g)$ is sequentially closed in the weak-strong topology and letting $A=\partial f$, $B=\nabla g$ and $t=t_n$, we have $-\nabla f_\gamma (u^*)\in (\nabla g)_{-\gamma}(\bar{u})$. So, we get $-\nabla f_\gamma (\bar{u})\in (\nabla g)_{-\gamma}(\bar{u})$, hence $\bar{u}\in \operatorname{Zer}((\nabla g)_{-\gamma}+\nabla f_\gamma)$.
	
	Next, we show that $u(\cdot)$ has at most one weak sequential cluster point. It proves that the trajectory convergence weakly to a zero of $(\nabla g)_{-\gamma}+\nabla f_\gamma$.
	
	Let $u^*, v^*$ be two weak sequential cluster point of $u(\cdot)$. So, there exist sequences $\{t_n\}\to \infty $ and $\{t'_n\}\to \infty$ such that $\{u(t_n)\}\rightharpoonup u^*$ and $\{u(t'_n)\}\rightharpoonup v^*$. Since $u^*, v^*\in \underset{u\in H}{\operatorname{Argmin}}\{f(u)+g(u)\}\ne\emptyset$, we have $\lim\limits_{t\to \infty} \Phi(t, u^*)\in \mathbb{R}$ and $\lim\limits_{t\to \infty} \Phi(t, v^*)\in \mathbb{R}$, hence $\exists \lim\limits_{t\to \infty} \Phi(t, u^*)-\lim\limits_{t\to \infty} \Phi(t, v^*)\in \mathbb{R}$, where $$\Phi(t, u^*)= \frac{1}{2\gamma}\|u(t)-u^*\|^2+f(u(t))- f(u^*)- \langle \nabla f_\gamma(u^*), u(t)-u^* \rangle. $$ So, we get
	\begin{align}
		\exists \lim\limits_{t\to \infty} \left( \frac{1}{\gamma}\langle u(t), v^*-u^* \rangle+ \langle \nabla f_{\gamma}(u^*)- \nabla f_\gamma(v^*), u(t)\rangle \right)\in \mathbb{R}. \label{3.14}
	\end{align}
	If we express (\ref{3.14}) by the means of sequences $\{t_n\}$ and $\{t'_n\}$, we obtain
	\begin{align}
		&\frac{1}{\gamma}\langle u^*, v^*-u^* \rangle +\langle  \nabla f_\gamma(v^*)- \nabla f_\gamma(u^*), u^*\rangle\nonumber\\
		&= \frac{1}{\gamma}\langle v^*, v^*-u^* \rangle +\langle  \nabla f_\gamma(v^*)- \nabla f_\gamma(u^*), v^*\rangle,\nonumber
	\end{align}
	which implies that
	\begin{align}
		\frac{1}{\gamma}\|u^*- v^*\|+ \langle  \nabla f_\gamma(v^*)- \nabla f_\gamma(u^*), v^*-u^*\rangle=0,\nonumber
	\end{align}
	and by the monotonicity of $\nabla f_\gamma$ we conclude that $u^*= v^*$.
	
	(v) The proof follows by using $A=\partial f$ and $B=\nabla g$ in Theorem \ref{th1}(vii).
\end{proof}
\section{Numerical Examples}\label{sc3}
\begin{example} \label{ex2}\normalfont
	Let $H= \mathbb{R}$ be a Hilbert space endowed with Euclidean inner product. Let $A: H\to 2^H$ be a set-valued operator defined by
	\begin{eqnarray}{
			A(x)= \left\{
			\begin{array}{lc@{}c@{}r}
				0,~~ ~~~~~if~ x<0\\[6pt]
				[0,1],~~ if ~x=0\\[6pt]
				1,~~ ~~~~~if~ x>0.
			\end{array}\right.
		}\nonumber
	\end{eqnarray}
	Note that $A$ is maximally monotone operator. So, $A$ is $\rho$-cohypomonotone operator for $\rho\ge0$.
	\\ Let $B: \mathbb{R}\to \mathbb{R}$ be $1$-cocoercive operator defined by $$B(x)= \frac{x}{2}.$$ So $\beta= 1$.
	Let $M: \mathbb{R}\to \mathbb{R}$ be a strongly positive operator defined by
	$M(x)= 3x$. So, from (\ref{meq}), we have the dynamical system
	\begin{eqnarray}{
			\left\{
			\begin{array}{lc@{}c@{}r}
				\dot{u}(t)=
				{
					\lambda(t) \left\{
					\begin{array}{lc@{}c@{}r} \frac{11u(t)}{12}-u(t), ~~~~~~~t<0\\[6pt]
						-u(t),~~~~~~~~~~~~~~~ t\in [0,\frac{1}{6}]\\[6pt]
						\frac{11u(t)}{12}-\frac{1}{6}-u(t), ~~ t>\frac{1}{6}
					\end{array}\right.
				}\\
				u(0)= u_0.
			\end{array}\right.\nonumber
	}\end{eqnarray}
	Choose $\rho= \frac{1}{4}= \frac{1}{2}-\frac{1}{4}$. So, $\gamma= \frac{1}{2}, \alpha= \frac{1}{4}$. Let $\kappa =\frac{1}{2}$. Observe that all the assumptions of Theorem \ref{th1} are satisfied. Figure \ref{Figure:1} shows the convergence behaviour of the trajectories generated by dynamical system (\ref{meq}) for the Lebesgue measurable function $\lambda: [0,\infty)\to [0,\infty)$ defined by
	\begin{eqnarray}\label{4.1}
		{
			\lambda(t)=\left\{
			\begin{array}{lc@{}c@{}r}
				1,~~ if~ t\in [0, 50]\\[6pt]
				0,~~ otherwise.
			\end{array}\right.
	}\end{eqnarray}
\end{example}
\begin{example}\label{ex1}\normalfont
	Let $H=\mathbb{R}^2$ be a real Hilbert space endowed with Euclidean inner product and $N:\mathbb{R}^2\to \mathbb{R}^2$ be an operator defined by
	\[ N=
	\begin{bmatrix}
		-2 &     0   \\
		0 &    0
	\end{bmatrix}
	.\]\\
	Now, consider the multivalued operator $A:=N^{-1}$, so by Example \ref{re}, $A_{\rho}= (N+\rho I)^{-1}$ is maximally monotone operator for $\rho> 2$, and hence $A$ is $\rho(>2)$-cohypomonotone operator.
	Let $B: \mathbb{R}^2\to \mathbb{R}^2$ be $1$-cocercive operator defined by
	\begin{align}
		B= \begin{bmatrix}
			\frac{1}{2} &     0   \\
			0 &    \frac{1}{3}\nonumber
		\end{bmatrix}.
	\end{align} Here $\beta= 1$. Let $M: \mathbb{R}^2\to \mathbb{R}^2$ be a strongly positive operator defined by
	\begin{align}
		M=\begin{bmatrix}
			4 &     0   \\
			0 &    8
		\end{bmatrix}.\nonumber
	\end{align}
	So, from (\ref{meq}), we have the dynamical system
	\begin{eqnarray}{
			\left\{
			\begin{array}{lc@{}c@{}r}
				\dot{u}(t)=
				\lambda(t)\left[ \begin{bmatrix}
					\frac{1}{4} &     0   \\
					0 &    \frac{35}{48}
				\end{bmatrix}u(t)-u(t)\right]
				
				\\
				u(0)= u_0.
			\end{array}\right.\nonumber
	}\end{eqnarray}
	Choose $\rho= 3= 4-1$. So, $\gamma= 4, \alpha=1$. Let $\kappa =3$. Observe that all the assumptions of the Theorem \ref{th1} are satisfied. Figure \ref{Figure:2} shows the convergence behaviour of the trajectories generated by dynamical system (\ref{meq}) for the Lebesgue measurable function $\lambda: [0,\infty)\to [0,\infty)$ defined by (\ref{4.1}).
	
\end{example}

\begin{figure}[h]
	
	\begin{center}
		\includegraphics[width=8cm, height=7cm]{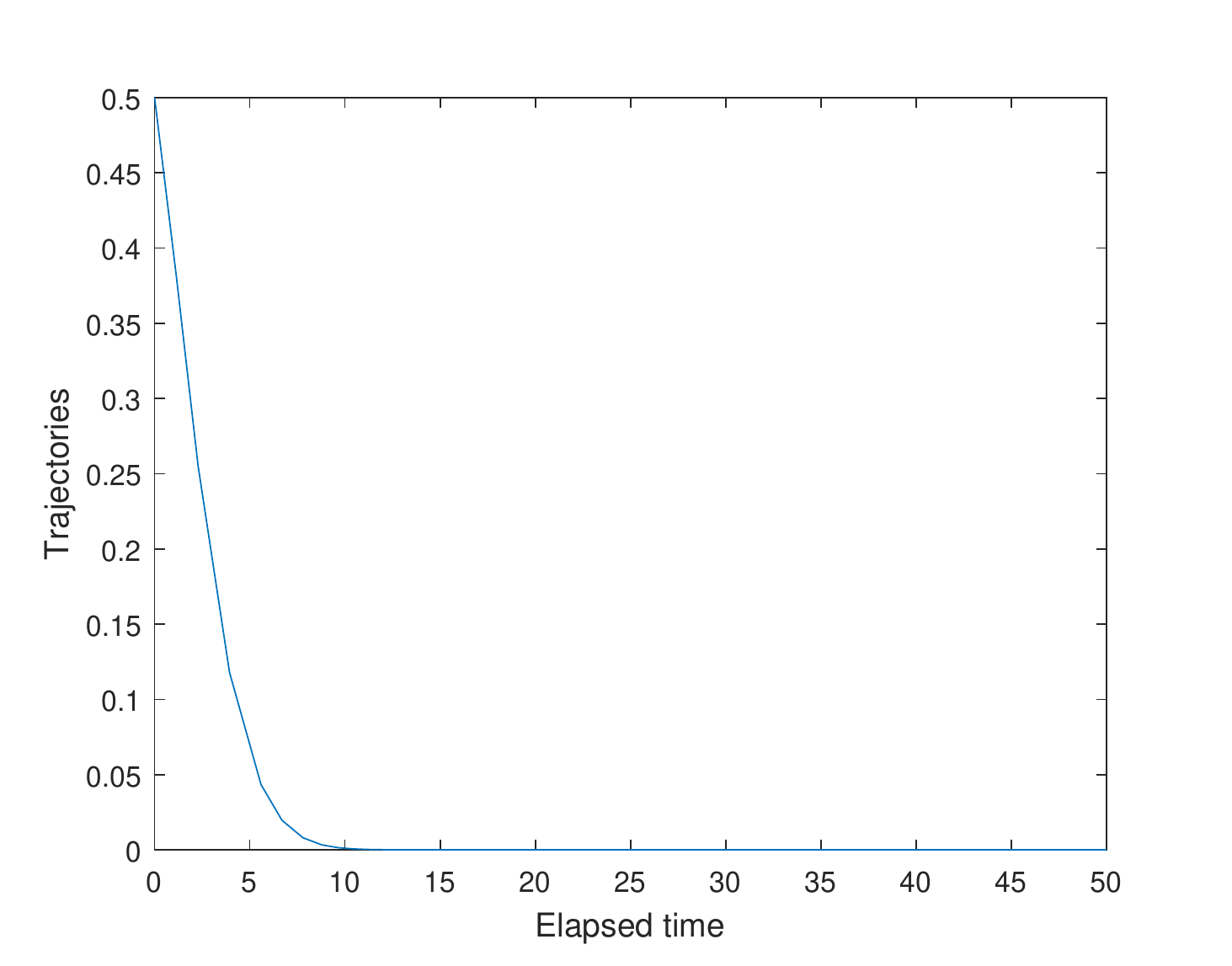}
	\end{center}
	\caption{Trajectories generated by the dynamical system of Example \ref{ex2} for $u_0= 0.5$.}
	\label{Figure:1}
\end{figure}
\begin{figure}[h]
	\begin{center}
		\includegraphics[width=8cm, height=7cm]{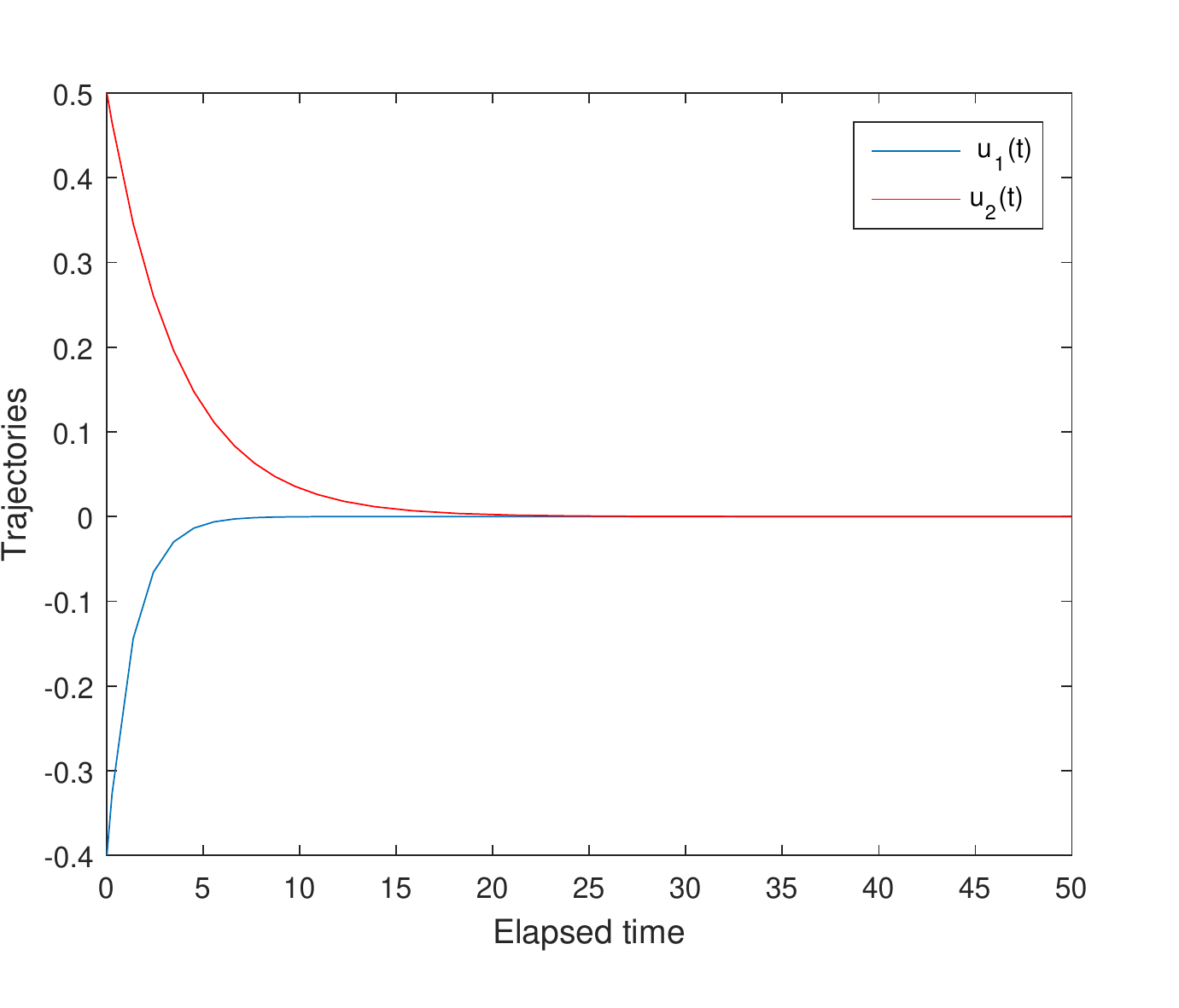}
	\end{center}
	\caption{Trajectories generated by the dynamical system of Example \ref{ex1} for $u_0= (-0.4, 0.5).$}
	\label{Figure:2}
\end{figure}
Note that discrete version of preconditioned backward-forward dynamical system is:
\begin{align}\label{n1}
	u_{n+1}= u_n+\lambda_n((I-\gamma M^{-1}B)(I-\gamma M^{-1}A_\gamma)u_n- u_n).
\end{align}
Taking a bounded sequence $\lambda_n= \frac{1}{n}$, one can observe by Example \ref{ex1} and Figure \ref{Figure:3} that sequence $\{\|u_n\|\}$ converges.
\begin{figure}[h]
	\begin{center}
		\includegraphics[width=8cm, height=7cm]{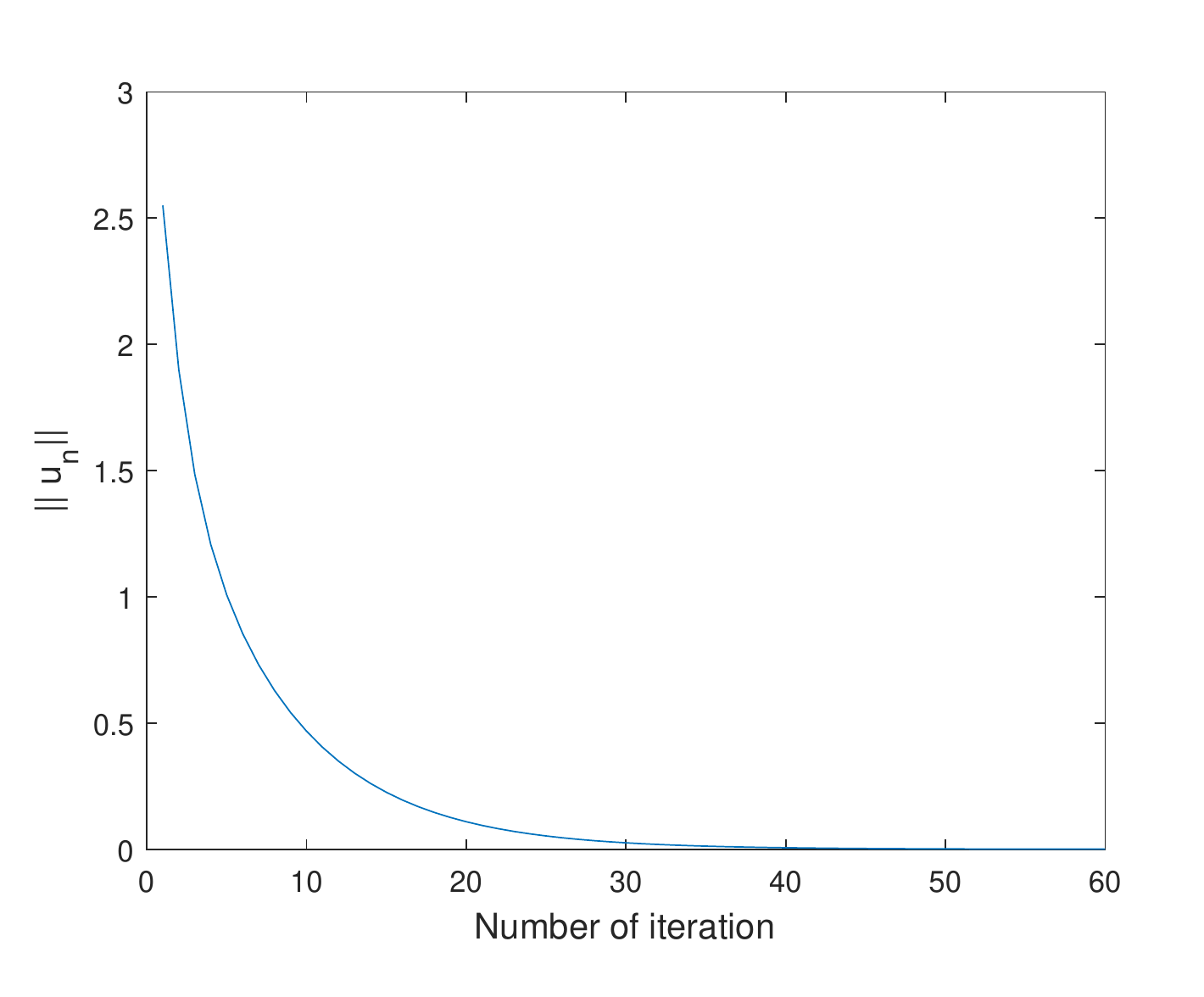}
	\end{center}
	\caption{Convergence of sequence $\{\|u_n\|\}$ given by (\ref{n1}) for Example \ref{ex1} and $u_0= (3, 2).$}
	\label{Figure:3}
\end{figure}
\section{Conclusions}\label{sc4}
In this paper, first-order variable metric backward-forward dynamical systems associated with monotone inclusion, and convex minimization problems have been studied. Existence, uniqueness, weak and strong convergence of the trajectories of dynamical systems (\ref{meq}), (\ref{meq2}), and (\ref{1}) have been studied. We have also established that an equilibrium point of the trajectory is globally exponentially stable and monotone attractor.
\section*{Acknowledgement}
The first author acknowledges the financial support from Ministry of Human Resource and Development (MHRD), New Delhi, India under Junior Research Fellow (JRF) scheme.

\end{document}